\theoremstyle{plain}
\newtheorem{Thm}{Theorem}[section]
\newtheorem{Cor}[Thm]{Corollary}
\newtheorem{Lemma}[Thm]{Lemma}
\newtheorem{Prop}[Thm]{Proposition}
\theoremstyle{definition}
\newtheorem{Def}[Thm]{Definition}
\newtheorem{Exl}[Thm]{Example}
\newtheorem{Rmk}[Thm]{Remark}
\newtheorem{Hypo}[Thm]{Hypothesis}
\numberwithin{equation}{section}
\newcommand{\A}{{\mathcal{A}}}
\newcommand{\B}{{\mathcal{B}}}
\newcommand{\C}{{\mathcal{C}}}
\newcommand{\D}{{\mathcal{D}}}
\newcommand{\E}{{\mathcal{E}}}
\newcommand{\F}{{\mathcal{F}}}
\renewcommand{\H}{{\mathcal{H}}}
\newcommand{\I}{{\mathcal{I}}}
\newcommand{\J}{{\mathcal{J}}}
\newcommand{\K}{{\mathcal{K}}}
\renewcommand{\L}{{\mathcal{L}}}
\renewcommand{\P}{{\mathcal{P}}}
\newcommand{\T}{{\mathcal{T}}}
\newcommand{\bC}{\mathbb{C}}
\newcommand{\bN}{\mathbb{N}}
\newcommand{\bT}{\mathbb{T}}
\newcommand{\bZ}{\mathbb{Z}}
\newcommand{\Alg}{\operatorname{Alg}}
\newcommand{\Aut}{\operatorname{Aut}}
\newcommand{\alg}{\operatorname{alg}}
\newcommand{\id}{{\operatorname{id}}}
\newcommand{\Span}{\operatorname{span}}
\newcommand{\ca}{C^*}
\DeclareMathOperator{\Ad}{Ad}
\DeclareMathOperator{\reg}{reg}
\DeclareMathOperator{\fin}{fin}
\DeclareMathOperator{\sing}{sing}
\begin{document}
	
	\title[Classification of Pimsner algebras]{Classification of irreversible and reversible Pimsner operator algebras}
	
	\author[A. Dor-On]{Adam Dor-On}
	\address{Department of Mathematical Sciences\\ University of Copenhagen \\ Copenhagen \\ Denmark.}
	\email{adoron@math.ku.dk}
	
	\author[S. Eilers]{S{\o{}}ren Eilers}
	\address{Department of Mathematical Sciences\\ University of Copenhagen \\ Copenhagen \\ Denmark.}
	\email{eilers@math.ku.dk}
	
	\author[S. Geffen]{Shirly Geffen}
	\address{Department of Mathematics\\ Ben-Gurion University of the Negev \\ Be'er Sheva\\ Israel.}
	\email{shirlyg@post.bgu.ac.il}

	\subjclass[2010]{Primary: 47L30, 46L35. Secondary: 47L55, 46L80, 46L08.}
	\keywords{Classification, tensor algebras, Pimsner algebras, rigidity, non-commutative boundary, K-theory, graph algebras, reconstruction}
	
	\thanks{The first author was supported by NSF grant DMS-1900916 and by the European Union's Horizon 2020 Marie Sklodowska-Curie grant No 839412. The second author was supported by the DFF-Research Project 2 `Automorphisms and Invariants of Operator Algebras', no. 7014-00145B, and by the DNRF through the Centre for Symmetry and Deformation (DNRF92). The third author was supported by a Negev fellowship, a Minerva fellowship programme, an ISF grant no. 476/16 and the DFG through SFB 878 and EXC 2044 Mathematics M\"{u}nster: Dynamics--Geometry--Structure.}

	\begin{abstract}
	Since their inception in the 30's by von Neumann, operator algebras have been used to shed light in many mathematical theories. Classification results for self-adjoint and non-self-adjoint operator algebras manifest this approach, but a clear connection between the two has been sought since their emergence in the late 60's.
	
	We connect these seemingly separate types of results by uncovering a hierarchy of classification for non-self-adjoint operator algebras and $C^*$-algebras with additional $C^*$-algebraic structure. Our approach naturally applies to algebras arising from $C^*$-correspondences to resolve self-adjoint and non-self-adjoint isomorphism problems in the literature. We apply our strategy to completely elucidate this newly found hierarchy for operator algebras arising from directed graphs.
	\end{abstract}
	
		
	\maketitle

	\renewcommand{\O}{{\mathcal{O}}}
	
	\section{Introduction}
	
	Originating in Elliott's work in the 70s, the endeavor to classify simple $C^*$-algebras via K-theory provided increasingly sophisticated classification and structural results, which have led to fruitful applications in dynamical systems and group theory. One such application is the classification of multivariable Cantor minimal systems \cite{GMPS10} by Giordano, Matui, Putnam and Skau. In a recent breakthrough due to Tikuisis, White and Winter \cite{TWW17}, Elliott's classification program is now nearly completed, and Rosenberg's conjecture on quasi-diagonality of $C^*$-algebras of discrete, amenable groups has been verified.
	
	In broad terms, $C^*$-rigidity is the concept that objects can be recovered (up to some equivalence), from associated $C^*$-algebraic data. This was established for $C^*$-algebras arising from various structures such as dynamical systems \cite{GPS95, Kri76, Ror95}, groupoids \cite{CRST+, Ren08}, number fields \cite{Li14, Li16} and more. 
	
	On the other hand, non-self-adjoint operator algebras are subalgebras of $C^*$-algebras and provide invariants for irreversible objects such as multivariable one-sided dynamical systems \cite{DK11a}, analytic varieties \cite{DRS11} and Markov chains \cite{DOM14}. The study of such operator algebras is motivated from single operator theory and complex analysis, partly in hope of resolving the unyielding Invariant Subspace Problem.
	
	Non-self-adjoint classification started with a paper of Arveson \cite{Arv67} on classification of operator algebras associated to measure preserving automorphisms. It was later realized that classification problems for many algebras can be put in the unified context of tensor algebras of $C^*$-correspondences \cite{MS00}, and in many concrete cases such problems were resolved \cite{DKat08, DK11a, DO18, KK04}.
	
	Operator algebras arising from graphs form one of the most important classes for classification of self-adjoint and non-self-adjoint operator algebras. In forthcoming work with Ruiz and Sims, the second named author has observed that graph $C^*$-algebras of amplified finite graphs (so that all edges have infinite multiplicity), as in \cite{ERS12}, together with their diagonal and gauge action can recover the amplified graph. On top of this, by using classification of KMS states on Toeplitz graph algebras of finite graphs, it was recently shown in \cite[Theorem 3 (1)]{BLRS19} that the vertex diagonal and gauge action completely recovers the graph.
	 
	Alternatively, recovering graphs from their non-self-adjoint graph tensor algebras is possible for arbitrary graphs by work of Solel \cite{Sol04} or for weaker notions of isomorphisms by work of Katsoulis and Kribs \cite{KK04}. From such concrete cases we see that invariants produced by classification of $C^*$-algebras with additional structure are drawing close to those produced by classification of non-self-adjoint operator algebras. Thus, a natural question is to ask for an exact connection between these seemingly separate rigidity type results. In this paper we show that it is more than just a coincidence that directed graphs can be recovered from both their irreversible algebra and their reversible algebra together with additional structure.
	
	A central study that connects $C^*$-algebras with non-self-adjoint operator algebras is Arveson's non-commutative boundary theory, which he developed and applied in several papers \cite{Arv69, Arv72, Arv98}. Classical boundaries of function algebras were the subject of intense research in the 50's and 60's, and are related to convexity and approximation theory via Choquet theory \cite{CM63}. The non-commutative generalization of the Shilov boundary is called the $C^*$-envelope, and was first shown to exist through Hamana's injective envelope \cite{Ham79}. The $C^*$-envelope is defined to be the smallest $C^*$-algebra containing the given operator algebra in a reasonable sense and provides a fruitful connection between $C^*$-algebras and non-self-adjoint operator algebras (see \cite{DK20, Kat17}). In this paper we apply ideas from Arveson's non-commutative boundary theory to uncover a precise hierarchy between classification of irreversible algebras and classification of reversible algebras with additional structure.
	
For what follows we will assume familiarity with the theory of Hilbert $C^*$-modules as presented in \cite{Lan95,MT05}.
	
\begin{Def}\label{d: correspondence} A \textit{$C^*$-correspondence over a $C^*$-algebra $\A$} is a right Hilbert $\A$-module $E$ together with a *-representation $\phi_E: \A\to \L(E)$, where $\L(E)$ denotes the $C^*$-algebra of adjointable operators on $E$.
\end{Def}

 A $C^*$-correspondence $E$ over $\A$ comes equipped with the operator space structure it inherits as a subspace of the linking $C^*$-algebra 
$$
\D_E=\begin{bmatrix}
\A & E^*\\
E & \K(E)
\end{bmatrix} \subseteq \L(\A\oplus E),
$$
for a more detailed discussion on linking algebras see \cite{BGR77}. When the context is clear, we write $a \xi$ to mean $\phi_E(a) \xi$ for $a\in \A$ and $\xi \in E$. To define our algebras universally, we will need the following notions of representations of $C^*$-correspondences from \cite{MS98}. We note immediately that what we call a \emph{rigged} representation here is often referred to as an \emph{isometric} representation in the literature. The reason for choosing this term is that a representation of $E$ can be isometric (or completely isometric) as a map on $E$ with its given operator space structure without being rigged.
	
	\begin{Def}
		Let $E$ be a $C^*$-correspondence over $\A$, and $\B$ some $C^*$-algebra. A (completely contractive) representation of $E$ is a pair $(\pi, t)$ such that $\pi : \A \rightarrow \B$ is a *-homomorphism and $t : E \rightarrow \B$ is a completely contractive linear map such that
		\begin{enumerate}
			\item
			$\pi(a)t(\xi) \pi(b) = t(a \cdot \xi \cdot b)$ for $a,b\in \A$ and $\xi \in E$.
		\end{enumerate}
		We say that $(\pi,t)$ is a \emph{rigged representation} if additionally
		\begin{enumerate}
			\item[(2)] $t(\xi)^*t(\eta) = \pi(\langle \xi,\eta \rangle)$
		\end{enumerate}
		We say that $(\pi, t)$ is \textit{injective} if $\pi$ is an injective *-homomorphism.
		We denote by $C^*(\pi,t)$ and $\overline{\Alg}(\pi,t)$ the $C^*$-algebra and the norm-closed operator algebra, respectively, generated by the images of $\pi$ and $t$ inside $\B$.
	\end{Def}
	
	The Toeplitz algebra $\T(E)$ is then the universal $C^*$-algebra generated by \emph{rigged} representations of $E$, and the tensor algebra $\T_+(E)$ is the universal operator algebra generated by all representations of $E$. For each $n\in \bN$, denote by $t^{(n)}: E^{\otimes n}\to \B$ the map uniquely determined on simple tensors by $t^{(n)}(\xi_1 \otimes \ldots \otimes \xi_n) = t(\xi_1) \circ \ldots \circ t(\xi_n)$ (See Subsection \ref{ss: Toeplitz and Tensor algebras of C*-correspondences} for more details). Suppose now that $(\pi,t)$ is a rigged representation such that $\T(E) \cong C^*(\pi,t)$. In this case we say that $(\pi,t)$ is universal. It then follows from the definition of rigged representation that 
	$$
	\mathcal{T}(E)=\overline{\Span}\{t^{(n)}(\xi) t^{(m)}(\eta)^*: \xi\in E^{\otimes n}, \eta\in E^{\otimes m}, m,n\geq 0 \}.
	$$
	Moreover, universality of $\T(E)$ implies that it comes equipped with a point-norm continuous circle action $\gamma: \bT \to \Aut(\T(E))$ given by
	$$
	\gamma_z(\pi(a))= \pi(a), \ \text{ and } \ \  \gamma_z(t(\xi))=z \cdot t(\xi), \ \ \text{for} \ z\in \bT, \xi\in E, a\in \A.
	$$
	
	\begin{Def} Let $E$ be a $C^*$-correspondence over $\A$. Denote by $\gamma$ the circle action on $\T(E)$. For each $n\in \bZ$, the $n$-spectral subspace for $\gamma$ is defined by 
		$$
		\T(E)_n=\{ T \in \T(E): \gamma_z(T)=z^n \cdot T \text{ for } z\in \bT \}.
		$$
	\end{Def}
	
	The circle action $\gamma$ provides $\T(E)$ with Fourier coefficients $\Phi_n$ given by 
	$$
	\Phi_n(T)=\int_{\bT} \gamma_z(T)z^{-n}dz,
	$$
	where $dz$ denotes normalized Haar measure on $\bT$. Using these Fourier coefficients, for each $T\in \T(E)$ the Ces\`aro sums $\sum_{k=-n}^n \big(1 - \frac{|k|}{n})\Phi_n(T)$ converge \emph{in norm} to $T$. In particular, we see that $T=0$ if and only if $\Phi_n(T) = 0$ for all $n\in \bZ$. 
	Notice further that $\Phi_n$ is an idempotent, and for a universal rigged representation $(\pi,t)$ the image of $\Phi_n$ contains $\overline{\Span}\{ t^{(k)}(\xi)t^{(l)}(\eta)^* : \xi \in E^{\otimes k}, \eta \in E^{\otimes l}, k-l = n \}$. Hence, by Ces\`aro approximation we get that
	$$
	\T(E)_n = \overline{\operatorname{\Span}}\{ t^{(k)}(\xi) t^{(l)}(\eta)^* : \xi \in E^{\otimes k}, \eta \in E^{\otimes l}, k-l = n \}.
	$$
	It then follows that $\{ \T(E)_n \}_{n\in \bZ}$ is a \emph{topological} grading for $\T(E)$ in the sense of Exel \cite[Definition 19.2]{Exelbook}, where $\Phi_0$ is the conditional expectation which indicates that the grading is topological.
	
	Since the tensor algebra $\T_+(E)$ is naturally a subalgebra of $\T(E)$ (see Subsection \ref{ss: Toeplitz and Tensor algebras of C*-correspondences}), it is invariant under the circle action $\gamma$ as a subalgebra of $\T(E)$, and there are spectral subspaces defined for $\T_+(E)$ as well. For each $n\in\ \bN $, the $n$-spectral subspace for $\T_+(E)$ is given by
	$$
	\T_+(E)_n=\{ T \in \T_+(E): \gamma_z(T)=z^n \cdot T \text{ for } z\in \bT \}.
	$$
	We say that $\{ \T_+(E)_n \}_{n\geq 0}$ is the \textit{grading} for $\T_+(E)$, and by \cite[Proposition 4.2]{DO18} we have concretely, when $(\pi,t)$ is universal, that 
$$
\T_+(E)_n= \{ t^{(n)}(\xi): \xi\in E^{\otimes n} \}.
$$

	Toeplitz-Pimsner algebras have a canonical quotient, also known as the Cuntz-Pimsner algebra originally defined by Pimsner in \cite{Pim95} and refined by Katsura in \cite{Kat04b}. These algebras generalize many constructions of operator algebras in the literature.
	
	\begin{Def}\label{d:Katsura-ideal}
		For a $C^*$-correspondence $E$ over $\A$, we define \emph{Katsura's ideal} $J_E$ in $\A$ by
		$$J_E\coloneqq  \{a\in \A: \phi_E(a)\in \K(E) \text{ and } ab=0 \text{ for all } b\in \ker \phi_E \}.$$
	\end{Def}
	
	For a rigged representation, $(\pi,t)$ of a $C^*$-correspondence $E$ over $\A$, it is a standard fact that there is a well-defined *-homomorphism $\psi_t: \K(E)\to \B$ given by $\psi_t(\theta_{\xi,\eta})=t(\xi)t(\eta)^*$ for $\xi,\eta\in E$.
	
	\begin{Def}\label{d:cov-rep}
		A rigged representation $(\pi,t)$ is said to be \textit{covariant} if $\pi(a)=\psi_t(\phi_E(a))$, for all $a\in J_E$.
	\end{Def}
	
	The Cuntz-Pimsner algebra $\O(E)$ is defined as the universal $C^*$-algebra generated by \textit{covariant} representations of $E$. Thus, $\O(E)$ is the quotient of $\T(E)$ by the ideal of relations $\J_E$ generated by $\{ \psi_T(\phi_E(a))-a: a\in J_E \}$. The following notions of isomorphisms between tensor and Toeplitz algebras are essential to our paper.

\begin{Def}\label{d:GradIso}
		Let $E, F$ be $C^*$-correspondences over $\A$ and $\B$, respectively. 
		\begin{enumerate}
			\item 
			A \textit{base-preserving isomorphism} between $\T(E)$ and $\T(F)$ is an isomorphism:
			$$
			\varphi: \T(E)\to \T(F)  \text{ s.t. } \varphi(\A)=\B.
			$$
			\item
			A \textit{base-preserving isomorphism} between $\T_+(E)$ and $\T_+(F)$ is an isomorphism: 
			$$
			\varphi: \T_+(E)\to \T_+(F)  \text{ s.t. } \varphi(\A)=\B.
			$$
			\item
			A \textit{graded isomorphism} between $\T(E)$ and $\T(F)$ is an isomorphism:
			$$
			\varphi: \T(E)\to \T(F)  \text{ s.t. } \varphi(\T(E)_n)=\T(F)_n \text{ for all } n\in \mathbb{Z}.
			$$
			\item
			A \textit{graded isomorphism} between $\T_+(E)$ and $\T_+(F)$ is an isomorphism: 
			$$
			\varphi: \T_+(E)\to \T_+(F)  \text{ s.t. } \varphi(\T_+(E)_n)=\T_+(F)_n \text{ for all } n\in \mathbb{N}.
			$$
		\end{enumerate}
	\end{Def}
	
By \cite[Theorem 3]{Rae18} we see that there is a bijective correspondence between topologically $\mathbb{Z}$-graded $C^*$-algebras, with graded *-homomorphisms and $C^*$-algebras equipped with a circle action, together with equivariant *-homomorphisms. In particular, an isomorphism $\varphi:\T(E)\to \T(F)$ is graded, if and only if it is equivariant in the sense that $\varphi\circ \gamma^E_z=\gamma^F_z\circ \varphi$, for all $z\in \bT$.

	\begin{Def} \label{D:ses-iso}
		Let $E$ and $F$ be $C^*$-correspondences over $\A$ and $\B$ respectively. We say that $\T(E)$ and $\T(F)$ are \emph{short-exact sequence isomorphic} (s.e.s. isomorphic) if there is a *-isomorphism $\varphi : \T(E) \rightarrow \T(F)$ such that $\varphi$ restricts to a *-isomorphism from $\J_E$ onto $\J_F$. In this case we call $\varphi$ an s.e.s. isomorphism.
	\end{Def}
	
	Clearly the existence of an s.e.s. isomorphism is equivalent to having an isomorphism of the short exact sequences,
	\begin{equation} \label{eq:ses-iso}
	\begin{tikzcd}
	0 \arrow[r] & \J_E \arrow[r] \arrow[d, "\varphi_0"] & \T(E) \arrow[r] \arrow[d, "\varphi"] & \O(E) \arrow[d, "\overline{\varphi}"] \arrow[r] & 0 \\
	0 \arrow[r] & \J_F \arrow[r] & \T(F) \arrow[r] & \O(F) \arrow[r] & 0,
	\end{tikzcd}
	\end{equation}
	where $\varphi_0$ is the restriction of $\varphi$ to $\J_E$, and $\overline{\varphi}$ is the induced map on the Cuntz-Pimsner algebras. Given $C^*$-correspondences $E$ and $F$ over $C^*$ algebras $\A$ and $\B$, consider the following notions of isomorphisms
	\newpage
\begin{enumerate}
			\item
			$E$ and $F$ are unitarily isomorphic $C^*$-correspondences.
			
			\item
			$\T_+(E)$ and $\T_+(F)$ are graded completely isometrically isomorphic.
			\item
			$\T_+(E)$ and $\T_+(F)$ are completely isometrically isomorphic.
			\item
			$\T(E)$ and $\T(F)$ are base-preserving graded isomorphic.
			\item
			$\T(E)$ and $\T(F)$ are base-preserving s.e.s. isomorphic.
\end{enumerate}
The first achievement in our paper is Corollary \ref{C:main-square} which establishes the following hierarchy: 
\[
\xymatrix@R=0.3cm{
&&(3)\ar@{=>}[dr]\\
(1)\ar@{=>}[r]&(2)\ar@{=>}[ur]\ar@{=>}[dr]&&(5)\\
&&(4)\ar@{=>}[ur]}
\]
This allows us to investigate completely isometric isomorphism problems for tensor algebras via structure-preserving isomorphisms on Toeplitz-Pimsner algebras.

In Section \ref{S: Graded isomorphisms} we study the graded and base-preserving picture, and when our $C^*$-correspondences are over compact operator subalgebras we show in Corollary \ref{C:equiv-iso} that $(4)$ implies $(1)$.

K-theory techniques from \cite{Kat04b} are employed in Section \ref{S: K-theory} to determine how isomorphism of short exact sequences as in diagram \eqref{eq:ses-iso} promotes to isomorphisms of associated K-groups. More specifically, in Theorem \ref{T:K-theory-six-term} we show that stable base-preserving s.e.s. isomorphisms induce isomorphisms of associated six-term exact sequences in K-theory that only involve the coefficient algebras, Katsura ideals and the Cuntz-Pimsner algebras. In Proposition \ref{P:compute-tau}, when $E$ and $F$ are $C^*$-correspondences over compact operator subalgebras, we are able to compute a natural connecting map between $K_0$ of Katsura ideals that is useful for later computations.

We apply our techniques in the context of graph tensor and Toeplitz-Cuntz-Krieger algebras in Section \ref{S: hierarchy for graph algebras}. We first use Corollary \ref{C:equiv-iso} to get a quick extension of \cite[Theorem 3 (1)]{BLRS19} to arbitrary graphs. More specifically, for any two graphs $G$ and $G'$, and their associated $C^*$-correspondences $E=X(G)$ and $F=X(G')$ we show that items $(1),(2)$ and $(4)$ are all equivalent to $G$ and $G'$ being isomorphic. Thus, Corollary \ref{C:equiv-iso} can be viewed as a further generalization of \cite[Theorem 3 (1)]{BLRS19} to $C^*$-correspondences over compact operator subalgebras.

For operator algebras associated to row-finite directed graphs we completely resolve the hierarchy of isomorphism problems. We show that every base-preserving isomorphism of Toeplitz-Cuntz-Krieger algebras of row-finite graphs is automatically an s.e.s. isomorphism, even after stabilization. Hence, item $(5)$ and its stable analogue in our hierarchy have simpler descriptions. Our main result for graph algebras of row-finite graphs is Theorem \ref{T: stable-graphs-base-pres} where we show for row-finite graphs $G$ and $G'$ that not only all of items $(1)-(5)$, but also that stable isomorphisms versions of items $(3)$ and $(5)$ are all equivalent to $G$ and $G'$ being isomorphic directed graphs. The stable completely isometric isomorphism problem for tensor algebras was impervious to standard methods from classification of non-self-adjoint operator algebras because these often relied on finite dimensional representation techniques. It is clear that all (completely contractive) representations of stabilized tensor graph algebras are on infinite dimensions so that finite-dimensional representation techniques normally cannot be applied. 

Together with Corollary \ref{C:main-square}, the above clearly shows that the classification of non-self-adjoint algebras goes hand in hand with classification of $C^*$-algebras with additional structure, and that previously intractable problems from non-self-adjoint classification can now be resolved by using classification techniques from $C^*$-algebra theory.

Finally, in Example \ref{ex:non-iso-base-iso} we show the limitations of our techniques in resolving general tensor algebra classification problems. More precisely, there exist two non-isomorphic amplified graphs $G$ and $G'$ (amplified in the sense that all edges have infinite multiplicity) such that their Toeplitz-Cuntz-Krieger algebras are base-preserving s.e.s. isomorphic. Together with \cite[Theorem 2.11]{KK04}, this shows that items $(5)$ and $(3)$ in our hierarchy above are generally not equivalent without some regularity assumption on the $C^*$-correspondences.

This paper contains six sections, including this introduction section. In Section \ref{S:prelim} we give some necessary material on dilation extreme representations, as well as Toeplitz-Pimsner, Cuntz-Pimsner and tensor algebras. In Section \ref{S: Hierarchy of isomorphism problems} we establish the main hierarchy of isomorphism problems. In Section \ref{S: Graded isomorphisms} we focus on graded isomorphisms and deduce rigidity results when the $C^*$-correspondences are over subalgebras of compact operators. In Section \ref{S: K-theory} we make essential connections between K-theory isomorphisms and stable base-preserving s.e.s. isomorphisms. Finally in Section \ref{S: hierarchy for graph algebras} we apply our techniques to resolve graded isomorphism problems for tensor and Toeplitz-Cuntz-Krieger algebras, and conclude with the resolution of the hierarchy for operator algebras associated to row-finite graphs.

	
	\section{Preliminaries} \label{S:prelim}
	
	
	\subsection{Dilation extremity and $C^*$-envelope}
	
	We explain how to define the notions of dilation extremity (normally called maximality in the literature) and the unique extension property for representations of \emph{not-necessarily-unital} operator algebras, in a way that yields the same theory as in the unital case. We refer the reader to \cite[Subsection 2.2]{DS18} for more details. For an operator algebra $\A$, by a \emph{representation} of $\A$ we shall henceforth mean a completely contractive homomorphism $\rho : \A \rightarrow B(\H)$. When $\rho: \A \rightarrow B(\H)$ is a representation, a \textit{dilation} $\pi : \A \rightarrow B(\K)$ is a representation such that $\H \subseteq \K$ and $\rho(a) = P_{\H} \pi(a)|_{\H}$.
	
	If $\A$ is an operator algebra and $\iota :\A \rightarrow \B$ is a completely isometric homomorphism such that $\B = C^*(\A)$, we say that the pair $(\iota,\B)$ is a $C^*$-cover. The $C^*$-envelope is defined as the smallest $C^*$-cover $(\kappa, C^*_e(\A))$ among all $C^*$-covers in the sense that for any $C^*$-cover $(\iota,\B)$ we have a natural quotient map $q_e : \B \rightarrow C^*_e(\A)$ such that $\kappa = q_e \circ \iota$. The $C^*$-envelope for unital operator algebras was first proven to exist by Hamana \cite{Ham79}, and a dilation theoretic proof was later given by Dritchel and McCullough \cite{DM05}. When discussing the $C^*$-envelope and other $C^*$-covers we will often suppress the maps $\iota$ and $\kappa$ and think of them as inclusions.
	
	If $\A \subseteq B(\H)$ is a \emph{non-unital} operator algebra generating a $C^*$-algebra $\B$ then by Meyer's theorem \cite[Section 3]{Mey01} every representation $\varphi: \A \rightarrow B(\K)$ extends to a unital representation $\varphi^1$ on the \emph{unitization} $\A^1=\A \oplus \mathbb{C}I_{\H}$ of $\A$ by setting $\varphi^1(a+\lambda I_{\H}) = \varphi(a) + \lambda I_{\K}$. This shows that there is a unique operator algebra structure on the unitization of $\A$, and yields the following Arveson extension theorem for not-necessarily unital operator algebras.
	
	\begin{Cor} \label{c:non-unital-arv}
		Let $\A \subseteq B(\H)$ be an operator algebra generating a $C^*$-algebra $\B$, and let $\varphi : \A \rightarrow B(\K)$ be a representation of $\A$. Then there is a completely positive contractive map $\widetilde{\varphi} : \B \rightarrow B(\K)$ such that $\widetilde{\varphi}|_{\A} = \varphi$.
	\end{Cor}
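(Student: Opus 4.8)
The plan is to bootstrap from the classical unital Arveson extension theorem by unitizing everything inside $B(\H)$. First, apply Meyer's theorem \cite[Section 3]{Mey01}, as recalled just above the statement: since $\varphi : \A \to B(\K)$ is a (completely contractive) representation, it extends to a \emph{unital} completely contractive homomorphism $\varphi^1 : \A^1 \to B(\K)$, where $\A^1 = \A \oplus \bC I_{\H} \subseteq B(\H)$ carries its unique operator algebra structure. Next form the unital $C^*$-algebra $\B^1 := \B + \bC I_{\H} \subseteq B(\H)$; since $C^*(\A) = \B$ we get $C^*(\A^1) = \B^1$, and $\A^1$ is a unital operator subalgebra of $\B^1$.

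The key point is the standard symmetrization of a unital complete contraction on a unital operator algebra. Let $\S := \A^1 + (\A^1)^* \subseteq \B^1$ be the operator system generated by $\A^1$, and define $\Psi_0 : \S \to B(\K)$ by $\Psi_0(a + b^*) = \varphi^1(a) + \varphi^1(b)^*$ for $a, b \in \A^1$. I would first verify that $\Psi_0$ is well defined: if $a + b^* = c + d^*$, then $x := a - c = (d - b)^*$ lies in $\A^1 \cap (\A^1)^*$, which is a unital $C^*$-subalgebra of $\B^1$; the restriction of $\varphi^1$ to it is a unital complete contraction, hence unital completely positive, hence self-adjoint, so $\varphi^1(x) = \varphi^1(x^*)^*$, which is exactly what is needed. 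A routine $2 \times 2$ matrix dilation argument then shows that $\Psi_0$ is unital completely positive; this is the only step where complete contractivity of $\varphi^1$, as opposed to mere boundedness, is used.

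Finally, apply the Arveson extension theorem \cite{Arv69} to the unital completely positive map $\Psi_0$ to obtain a unital completely positive extension $\Psi : \B^1 \to B(\K)$, and put $\widetilde{\varphi} := \Psi|_{\B}$. Then $\widetilde{\varphi}$ is completely positive as a restriction of a completely positive map; it is contractive because $\|\widetilde{\varphi}\| \le \|\Psi\| = \|\Psi(I_{\H})\| = 1$; and $\widetilde{\varphi}|_{\A} = \Psi|_{\A} = \varphi^1|_{\A} = \varphi$, since $\Psi$ agrees with $\Psi_0$, and hence with $\varphi^1$, on $\A^1 \supseteq \A$. The main obstacle --- really the only non-bookkeeping point --- is the middle paragraph: checking that the symmetrized map $\Psi_0$ is well defined and completely positive on $\S$. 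Once that standard fact about unital operator algebras is in hand, the rest is Meyer's theorem and the classical Arveson extension theorem applied essentially verbatim, the one thing to watch being that $\A^1$, $\S$ and $\B^1$ are all realized inside $B(\H)$ so that $I_{\H}$ serves as a genuine common unit.
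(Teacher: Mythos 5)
Your proof is correct and follows exactly the route the paper intends: unitize $\varphi$ via Meyer's theorem, pass to a unital completely positive map on the operator system $\A^1+(\A^1)^*$ (the standard symmetrization of a unital complete contraction, cf.\ \cite[Proposition 3.5]{Paulsen} applied at all matrix levels), extend by the unital Arveson extension theorem, and restrict to $\B$. The well-definedness check through $\A^1\cap(\A^1)^*$ and the norm bound $\|\Psi\|=\|\Psi(I_\H)\|=1$ are handled correctly, so nothing further is needed.
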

	
	We then define the unique extension property and dilation extremity (maximality) for not-necessarily-unital operator algebras, in a way that extends the same definitions for unital operator algebras and requiring that the maps are also unital.
	
	\begin{Def} \label{def:non-unital-UEP}
		Let $\A \subseteq B(\H)$ be an operator algebra generating a $C^*$-algebra $\B$. Let $\rho : \A \rightarrow B(\K)$ be a representation.
		\begin{enumerate}
			\item
			We say that $\rho$ has the \emph{unique extension property} (UEP) if every completely positive contractive map $\pi: \B \rightarrow B(\K)$ extending $\rho$ is a *-representation.
			\item
			We say that $\rho$ is \emph{dilation extreme} (or \emph{maximal}) if whenever $\pi$ is a representation dilating $\rho$, then $\pi = \rho \oplus \psi$ for some representation $\psi$.
		\end{enumerate}
	\end{Def}
	
	Using the definitions above, it was shown in \cite[Proposition 2.4]{DS18} that dilation extremity and the UEP are equivalent, and that a representation is dilation extreme if and only if its unitization is dilation extreme \cite[Proposition 2.5]{DS18}. The $C^*$-envelope $C^*_e(\A)$ of a non-unital algebra $\A$ coincides with the $C^*$-algebra generated by $\A$ inside $C^*_e(\A^1)$, and the theorem of Dritschel and McCullough (see \cite{DM05}) holds in the possibly-non-unital context. That is, every representation of an operator algebra dilates to a dilation extreme representation. Hence, even in the non-unital setting we have that $C^*_e(\A)$ is the $C^*$-algebra generated by the image of any dilation extreme completely isometric representation. Hence, whether $\A$ is unital or not, the $C^*$-envelope $C^*_e(\A)$ of $\A$ coincides with the universal $C^*$-algebra generated by dilation extreme representations of $\A$.
	
One of the most important properties of dilation extreme representations of operator algebras is their invariance under completely isometric isomorphisms. More precisely, if $\theta :\A \rightarrow \B$ is a completely isometric isomorphism, then a representation $\rho : \B \rightarrow B(\H)$ is dilation extreme for $\B$ if and only if $\rho \circ \theta$ is dilation extreme for $\A$. We will require the following weak versions of dilation extremity which were originally defined in the work of Muhly and Solel \cite{MS98} in the language of Hilbert modules.
	
	\begin{Def}
		Let $\A$ be an operator algebra, and $\rho : \A \rightarrow B(\H)$ a representation. We say that a dilation $\pi: \A \rightarrow B(\K)$ of $\rho$ is
		\begin{enumerate}
			\item an \emph{e-dilation} if $\H$ is \emph{invariant} for $\pi(\A)$.
			
			\item a \emph{c-dilation} if $\H$ is \emph{co-invariant} for $\pi(\A)$.
			
		\end{enumerate}
	\end{Def}
	
	\begin{Def}
		Let $\A$ be an operator algebra, and $\rho : \A \rightarrow B(\H)$ a representation. We say that $\rho$ is
		\begin{enumerate}
			\item \emph{e-dilation extreme} if whenever $\pi : \A \rightarrow B(\K)$ is e-dilation of $\rho$, then in fact $\pi = \rho \oplus \psi$ for some representation $\psi$.
			
			\item \emph{c-dilation extreme} if whenever $\pi : \A \rightarrow B(\K)$ is c-dilation of $\rho$, then in fact $\pi = \rho \oplus \psi$ for some representation $\psi$.
			
		\end{enumerate}
	\end{Def}
	
	The above notions were called ``extension extreme" and ``coextension extreme" by Davidson and Katsoulis in \cite{DK11b}. By a theorem of Sarason \cite[Exercise 7.6]{Paulsen}, it follows that $\rho$ is dilation extreme if and only if it is both e-dilation extreme and c-dilation extreme. In fact, it follows from the work of Dritchel and McCullough that every representation of an operator algebra admits either an e-dilation that is e-dilation extreme or a c-dilation that is c-dilation extreme. The following is the analogue of Arveson's ``invariance of UEP" for c-dilation and e-dilation extremal representations.
	
	\begin{Thm} \label{t:ec-dil}
		Let $\A$ and $\B$ be operator algebras, let $\theta : \A \rightarrow \B$ an isomorphism, and let $\rho : \B \rightarrow B(\H)$ be a representation. Then 
		\begin{enumerate}
			
			\item $\rho$ is e-dilation extreme if and only if $\rho \circ \theta$ is e-dilation extreme.
			
			\item $\rho$ is c-dilation extreme if and only if $\rho \circ \theta$ is c-dilation extreme.
			
		\end{enumerate}
	\end{Thm}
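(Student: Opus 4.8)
The plan is to prove both (1) and (2) by transporting one-sided dilations along $\theta$, in the same way one proves Arveson's invariance of the UEP via the Dritschel--McCullough picture, bearing in mind that here (as in the ``invariance of UEP'' statement for full dilation extremity recalled just above) $\theta$ is a completely isometric isomorphism, so that $\theta^{-1}$ is again a completely contractive homomorphism. First I would observe that it suffices to prove a single implication: since $\rho=(\rho\circ\theta)\circ\theta^{-1}$ and $\theta^{-1}:\B\to\A$ is again a completely isometric isomorphism, the reverse implication in each of (1) and (2) follows from the forward one applied to $\theta^{-1}$ in place of $\theta$ and $\rho\circ\theta$ in place of $\rho$. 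Thus it is enough to show that if $\rho:\B\to B(\H)$ is e-dilation extreme (resp.\ c-dilation extreme), then $\rho\circ\theta:\A\to B(\H)$ is e-dilation extreme (resp.\ c-dilation extreme).

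For the forward implication in (1), let $\pi:\A\to B(\K)$ be an e-dilation of $\rho\circ\theta$, so $\H\subseteq\K$ is invariant for $\pi(\A)$ and $\rho(\theta(a))=P_\H\pi(a)|_\H$ for every $a\in\A$. Set $\sigma\coloneqq\pi\circ\theta^{-1}:\B\to B(\K)$; this is a representation of $\B$, being a composition of completely contractive homomorphisms. Because $\theta^{-1}$ maps $\B$ onto $\A$, we have $\sigma(\B)=\pi(\A)$, so $\H$ is invariant for $\sigma(\B)$; and for $b\in\B$, writing $a=\theta^{-1}(b)$, we get $P_\H\sigma(b)|_\H=P_\H\pi(a)|_\H=\rho(\theta(a))=\rho(b)$. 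Hence $\sigma$ is an e-dilation of $\rho$. By e-dilation extremity of $\rho$, $\sigma=\rho\oplus\psi'$ for some representation $\psi'$ of $\B$. Composing on the right with $\theta$ yields $\pi=\sigma\circ\theta=(\rho\circ\theta)\oplus(\psi'\circ\theta)$, and $\psi'\circ\theta$ is a representation of $\A$ since $\theta$ is completely contractive. Thus $\rho\circ\theta$ is e-dilation extreme. The proof of (2) is word-for-word the same with ``invariant'' replaced by ``co-invariant'' and ``e-dilation'' by ``c-dilation'' throughout.

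There is really no serious obstacle here; the one point that must be handled with care is the verification that $\sigma=\pi\circ\theta^{-1}$ is a bona fide representation of $\B$ --- that is, completely contractive --- which is precisely where complete isometry of $\theta$ (equivalently, complete contractivity of $\theta^{-1}$) enters, exactly as in the classical statement for full dilation extremity. One should also make sure the argument is carried out directly at the level of the possibly-non-unital algebras, since e- and c-dilations are defined there; this is automatic in the argument above, as no unitization is used, although one could alternatively unitize and appeal to the non-unital/unital equivalences from \cite{DS18}.
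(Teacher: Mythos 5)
Your proof is correct and follows essentially the same route as the paper: reduce to one implication using that $\theta^{-1}$ is again a completely isometric isomorphism, transport the given one-sided dilation via $\pi\circ\theta^{-1}$, apply extremity of $\rho$, and compose back with $\theta$ to split $\pi$. The paper writes out case (2) and you write out case (1), but the arguments are identical.
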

	
	\begin{proof}
		An inverse of a completely isometric isomorphism between operator algebras is again a completely isometric isomorphism. Hence, it is enough to prove one direction in each claim. We will show the forward direction for (2), and the proof for the forward direction of (1) is similar. Assume $\rho$ is c-dilation extreme and let $\pi:\A\to B(\K)$ be a c-dilation of $\rho\circ \theta$.
		Then $\rho(b)=P_{\H}\pi(\theta^{-1}(b))|_{\H}$ for all $b\in \B$. So, $\pi\circ\theta^{-1}:\B\to B(\K)$ is a c-dilation of $\rho$ as $\H$ is coinvariant for $\pi \circ \theta^{-1}(\B) = \pi(\A)$. By our assumption, $\pi\circ\theta^{-1}=\rho\oplus \psi$ for some representation $\psi$. Thus $\pi=(\rho\circ\theta) \oplus (\psi\circ \theta)$, as required. 
	\end{proof}
	
Throughout the paper, we shall denote by $\A \otimes \B$ the spatial tensor product of the operator algebras $\A$ and $\B$ as defined in \cite[Subsection 2.2.2]{BL04}. When $\A$ and $\B$ are both C*-algebras, $\otimes$ coincides with the minimal tensor product of C*-algebras.
	
\begin{Lemma} \label{l:tensor-dil-ext}
Let $\A$ be an operator algebra generating a $C^*$-algebra $\C$ and $\B$ a $C^*$-algebra. Assume  $\tau : \A \rightarrow B(\H)$ is a representation, and $\kappa : \B \rightarrow B(\K)$ a *-representation. Then $\tau$ is dilation extreme if and only if $\tau \otimes \kappa$ is dilation extreme.
\end{Lemma}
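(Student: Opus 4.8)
The plan is to translate dilation extremity into the unique extension property (UEP) via \cite[Proposition~2.4]{DS18}, relative to the $C^*$-cover $(\A\otimes\B,\ \C\otimes_{\min}\B)$; here one uses that $\A\otimes\B$ generates $\C\otimes_{\min}\B$, which is standard for the spatial tensor product (\cite[Subsection~2.2.2]{BL04}). I will use throughout that the spatial tensor product of a completely positive contraction with a $*$-representation is again a completely positive contraction, and the elementary fact that $S\otimes T=0$ with $T\neq0$ forces $S=0$. I also assume $\kappa\neq0$ (which is needed for the first implication, and holds whenever $\kappa$ is nondegenerate with $\B$ nonzero). Finally I record the auxiliary fact --- proved by compressing an arbitrary dilation to each summand --- that dilation extremity is preserved under direct sums and hence amplifications, so that $\tau$ dilation extreme implies $\tau\otimes I_\K\cong\bigoplus_{\dim\K}\tau$ is dilation extreme.

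\emph{The implication $\tau\otimes\kappa$ dilation extreme $\Rightarrow$ $\tau$ dilation extreme.} Let $\sigma:\C\to B(\H)$ be a completely positive contraction with $\sigma|_{\A}=\tau$; the task is to show $\sigma$ is a $*$-representation. Now $\sigma\otimes\kappa:\C\otimes_{\min}\B\to B(\H\otimes\K)$ is a completely positive contraction restricting to $\tau\otimes\kappa$ on $\A\otimes\B$, so by the UEP of $\tau\otimes\kappa$ it is a $*$-representation. Evaluating $(\sigma\otimes\kappa)(x^*x)=(\sigma\otimes\kappa)(x)^*(\sigma\otimes\kappa)(x)$ at $x=c\otimes b$ gives $\bigl(\sigma(c^*c)-\sigma(c)^*\sigma(c)\bigr)\otimes\kappa(b)^*\kappa(b)=0$ for all $c\in\C$, $b\in\B$; choosing $b$ with $\kappa(b)\neq0$ yields $\sigma(c^*c)=\sigma(c)^*\sigma(c)$ for every $c$, and a completely positive contraction with this property is a $*$-representation. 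Hence $\tau$ has the UEP.

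\emph{The implication $\tau$ dilation extreme $\Rightarrow$ $\tau\otimes\kappa$ dilation extreme.} Let $\Sigma:\C\otimes_{\min}\B\to B(\H\otimes\K)$ be a completely positive contraction restricting to $\tau\otimes\kappa$ on $\A\otimes\B$; I will show the multiplicative domain $\mathrm{MD}(\Sigma)$ is all of $\C\otimes_{\min}\B$, whence $\Sigma$ is a $*$-representation. Assume first that $\A$, $\B$ and $\kappa$ are unital and $1_\A=1_\C$. Since $\Sigma(1_\C\otimes b)=\tau(1_\A)\otimes\kappa(b)=I_\H\otimes\kappa(b)$, the restriction $\Sigma|_{1_\C\otimes\B}$ is a $*$-representation, so $1_\C\otimes\B\subseteq\mathrm{MD}(\Sigma)$. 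On the other hand $\Sigma|_{\C\otimes 1_\B}$, viewed as a completely positive contraction on $\C$, extends the representation $\tau\otimes I_\K$ of $\A$; since $\tau\otimes I_\K$ is dilation extreme it has the UEP, so $\Sigma|_{\C\otimes 1_\B}$ is a $*$-representation and $\C\otimes 1_\B\subseteq\mathrm{MD}(\Sigma)$. As $\mathrm{MD}(\Sigma)$ is a $C^*$-subalgebra containing $\C\otimes 1_\B$ and $1_\C\otimes\B$, it contains the $C^*$-algebra they generate, which is all of $\C\otimes_{\min}\B$, and we are done in the unital case.

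For the general case I would carry out the same computation inside $M(\C\otimes_{\min}\B)$, where $\C\otimes 1$ and $1\otimes\B$ genuinely live. After passing to essential subspaces one may assume $\kappa$ and the $*$-representation $\widetilde\tau$ of $\C$ extending $\tau$ (which exists by the UEP) are nondegenerate; then, choosing a nondegenerate Stinespring dilation of $\Sigma$ --- whose ambient $*$-representation is strictly continuous on multipliers --- one obtains a completely positive contractive extension $\overline{\Sigma}$ of $\Sigma$ to $M(\C\otimes_{\min}\B)$. Using $\kappa(v_\nu)\to I_\K$ strongly along a contractive approximate unit $(v_\nu)$ of $\B$ one computes $\overline{\Sigma}$ on $\C\otimes 1$ and, invoking the UEP of $\tau\otimes I_\K$ again, gets $\C\otimes 1\subseteq\mathrm{MD}(\overline{\Sigma})$; then the relations $\overline{\Sigma}(a\otimes b)=\tau(a)\otimes\kappa(b)$ together with nondegeneracy of $\widetilde\tau$ identify $\overline{\Sigma}$ on $1\otimes\B$ and give $1\otimes\B\subseteq\mathrm{MD}(\overline{\Sigma})$, so that $\mathrm{MD}(\overline{\Sigma})=\C\otimes_{\min}\B$ and $\Sigma$ is a $*$-representation. (Alternatively one may reduce to the unital case by unitizing, using \cite[Proposition~2.5]{DS18}.) I expect this passage to the non-unital setting --- making sense of ``$\C\otimes 1$'' and ``$1\otimes\B$'' and placing them in the multiplicative domain in the absence of an honest unit --- to be the step requiring the most care; the rest is formal.
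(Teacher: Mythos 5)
Your proposal is correct and follows essentially the same route as the paper: translate dilation extremity into the UEP via \cite[Proposition 2.4]{DS18}, tensor a given completely positive extension with $\kappa$ to handle one direction, and in the other direction use a multiplicative-domain argument on the two legs $\C\otimes 1$ and $1\otimes\B$, with the amplification $\tau\otimes I_{\K}$ inheriting the UEP (the paper's ``up to multiplicity'' step, which you make explicit). The only divergence is your multiplier-algebra treatment of the non-unital case, where the paper simply unitizes --- an alternative you also note --- so no further comment is needed.
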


\begin{proof}
We may assume, perhaps after unitization, that $\A$ and $\B$ are both unital, and that $\tau$ and $\kappa$ are unital. It will suffice to show that $\tau$ has the unique extension property if and only if $\tau \otimes \kappa$ does. 

So assume $\tau$ has the UEP. Let $\pi : \C \otimes \B \rightarrow B(\H \otimes \K)$ be a unital completely positive extension of $\tau \otimes \kappa$. Then $\pi|_{\C \otimes I}$ is (up to multiplicity) a unital completely positive extension of $\tau$, so that $\pi$ is multiplicative on $\C$. Since $\pi|_{\B}$ is (up to multiplicity) just $\kappa$, we see that $\pi$ is also multiplicative on $\B$. Thus, by \cite[Theorem 3.18]{Paulsen} we have that $\pi$ is multiplicative, and $\tau \otimes \kappa$ has the unique extension property.

Conversely, if $\tau \otimes \kappa$ has the UEP, let $\pi : \C \rightarrow B(\H)$ be a unital completely positive extension of $\tau$. Then clearly $\pi \otimes \kappa$ is a unital completely positive extension of $\tau \otimes \kappa$, so that $\pi \otimes \kappa$ is multiplicative. In particular, $\pi$ is multiplicative, so that $\tau$ has the UEP. 
\end{proof}

\begin{Cor} \label{c:envelope-commutes}
Let $\A$ be an operator algebra and $\B$ a $C^*$-algebra. Then $C^*_e(\A \otimes \B) \cong C_e^*(\A) \otimes \B$.
\end{Cor}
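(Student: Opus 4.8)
The plan is to realize $C^*_e(\A)$ concretely through a single dilation extreme, completely isometric representation and then feed the situation into Lemma~\ref{l:tensor-dil-ext}. First I would pick any completely isometric representation of $\A$ and, using the (possibly non-unital) Dritschel--McCullough theorem recalled above, dilate it to a dilation extreme representation $\tau : \A \to B(\H)$. Since a dilation of a completely isometric map is again completely isometric --- compressions do not increase norms, while $\tau$ is completely contractive and agrees in norm with a completely isometric map --- the representation $\tau$ is completely isometric, and by the discussion above $C^*(\tau(\A)) = C^*_e(\A)$. Next, fix a faithful $*$-representation $\kappa : \B \to B(\K)$; as $\B$ is a $C^*$-algebra its image $\kappa(\B)$ is already closed, so $C^*(\kappa(\B)) = \kappa(\B) \cong \B$.

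Now consider $\tau \otimes \kappa : \A \otimes \B \to B(\H \otimes \K)$. Because the spatial tensor product of operator algebras may be computed using any pair of completely isometric representations of the two factors (see \cite[Subsection 2.2.2]{BL04}), the map $\tau \otimes \kappa$ is a completely isometric representation of $\A \otimes \B$. By Lemma~\ref{l:tensor-dil-ext} it is moreover dilation extreme, since $\tau$ is dilation extreme and $\kappa$ is a $*$-representation. Hence, by the characterization of the $C^*$-envelope as the $C^*$-algebra generated by the image of any dilation extreme completely isometric representation, $C^*_e(\A \otimes \B) \cong C^*\big((\tau \otimes \kappa)(\A \otimes \B)\big) \subseteq B(\H \otimes \K)$.

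To finish, I would identify this generated $C^*$-algebra. By definition of the minimal (spatial) tensor product of $C^*$-algebras acting on a tensor product of Hilbert spaces, the $C^*$-algebra generated inside $B(\H \otimes \K)$ by the algebraic tensor product $C^*(\tau(\A)) \odot C^*(\kappa(\B))$ is $C^*(\tau(\A)) \otimes C^*(\kappa(\B))$; as $\tau(\A)$ and $\kappa(\B)$ generate $C^*(\tau(\A))$ and $C^*(\kappa(\B))$ respectively, this same $C^*$-algebra is also generated by $\tau(\A) \odot \kappa(\B)$, hence equals $C^*\big((\tau \otimes \kappa)(\A \otimes \B)\big)$. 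Combining, $C^*_e(\A \otimes \B) \cong C^*(\tau(\A)) \otimes C^*(\kappa(\B)) \cong C^*_e(\A) \otimes \B$. The only points requiring care are the two standard facts quoted from \cite{BL04} --- independence of the spatial operator-algebra tensor product from the chosen completely isometric representations, and the Hilbert-space description of the minimal $C^*$-tensor product --- together with the verification that $\tau \otimes \kappa$ is genuinely dilation extreme; but the last of these is precisely Lemma~\ref{l:tensor-dil-ext}, so I expect no substantive obstacle beyond this bookkeeping.
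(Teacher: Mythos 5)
Your proposal is correct and follows essentially the same route as the paper: take a dilation extreme completely isometric representation $\tau$ of $\A$ and a faithful $*$-representation $\kappa$ of $\B$, note $\tau \otimes \kappa$ is completely isometric by the representation-independence (injectivity) of the spatial tensor product, apply Lemma \ref{l:tensor-dil-ext} to get dilation extremity, and identify the generated $C^*$-algebra with $C^*_e(\A)\otimes\B$. The extra details you supply (dilating an arbitrary completely isometric representation via Dritschel--McCullough and checking the dilation stays completely isometric) are just a correct unpacking of the paper's standing fact that such a $\tau$ exists.
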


\begin{proof}
Let $\tau : \A \rightarrow B(\H)$ be a dilation extreme completely isometric representation, and $\kappa : \B \rightarrow B(\K)$ an injective $*$-representation. By injectivity of minimal tensor product, we see that $\tau \otimes \kappa$ is completely isometric and by Lemma \ref{l:tensor-dil-ext} we get that $\tau \otimes \kappa$ is dilation extreme. Thus, since the $C^*$-algebra generated by the image of $\tau \otimes \kappa$ is $C^*_e(\A) \otimes \B$, and as any $C^*$-algebra generated by the image of a dilation extreme completely isometric representation is the $C^*$-envelope, we get that $C^*_e(\A \otimes \B) \cong C^*_e(\A) \otimes \B$.
\end{proof}
	
	
	\subsection{Toeplitz and Tensor algebras of $C^*$-correspondences}\label{ss: Toeplitz and Tensor algebras of C*-correspondences}

For the basic theory of Hilbert C*-modules and C*-correspondences we recommend \cite{Lan95, MT05}. For C*-correspondences $E$ and $F$ over a C*-algebra $\A$, we can form the \emph{interior tensor product} $E\otimes_{\A} F$ of $E$ and $F$ as follows. Let $E\otimes_{\alg} F$ denote the quotient of the algebraic tensor product, by the subspace generated by elements of the form:
	$$
	xa\otimes y-x\otimes \phi_F(a)y, \ \ \text{for} \ \  x\in E, y\in F, a\in \A.
	$$
	Define an $\A$-valued inner product, left and right $\A$-actions by:
	$$ 
	\langle x_1\otimes y_1, x_2\otimes y_2\rangle = \langle y_1, \phi_F(\langle x_1,x_2\rangle)y_2\rangle, \ \text{for} \ x_1,x_2\in E, y_1,y_2\in F
	$$
	$$
	(x\otimes y)a=x\otimes (ya), \ \ \text{for} \ x\in E, y\in F,
	$$
	$$
	\phi_{E\otimes F}(a)(x\otimes y) = (\phi_E(a)x)\otimes y, \ \ \text{for} \ x\in E, y\in F, a\in \A.
	$$
	$E\otimes_{\A} F$ is the completion of $E\otimes_{\operatorname{alg}} F$ with respect to the $\A$-valued semi-inner product defined above. One checks that $E\otimes_{\A} F$ is a C*-correspondence over $\A$. We will often abuse notation and write $E \otimes F$ for $E \otimes_{\A} F$ when the context is clear.
	
	\begin{Hypo}
 We assume throughout the paper that every C*-corres\-pondence $E$ over $\A$ is non-degenerate in the sense that $\phi_E(\A)E=E$.
	\end{Hypo}	
	
	Let $E$ be a C*-correspondence over $\A$. Set $E^{\otimes 0 }=\A$ and $E^{\otimes n}=E\otimes E \otimes \cdots\otimes E$, for the $n$-fold tensor product when $n>0$. Notice that we have natural isomorphisms $E^{\otimes n}\otimes  E^{\otimes m}\cong E^{\otimes {(n+m)}}$ for $n> 0$ and $m\geq 0$. Note also that $E^{\otimes 0}\otimes_\A E^{\otimes n}\cong E^{\otimes n}$ for $n\geq 0$ by non-degeneracy.	There is a special injective representation for $\T(E)$ called the Fock space representation of $E$. We denote $\F(E)$ the C*-correspondence over $\A$ defined by $\F(E)=\oplus_{n =0}^{\infty} E^{\otimes n}$. Since $\phi_{\F(E)}$ acts on the $0$-th summand by left multiplication, it is clear that $\phi_{\F(E)}$ is injective, so we will often identify $\A$ as a subalgebra of $\L(\F(E))$ via $\phi_{\F(E)}$. We define Fock space representation $(\iota,T)$ as follows. We let $\iota := \phi_{\F(E)}$, and for each $\xi\in E$ we set $T_{\xi} \in \L(\F(E))$ by
	$$
	T_{\xi} (a)=\xi a \ \ \text{ and }\ \ T_{\xi}(\xi_1\otimes\ldots\otimes \xi_n)=\xi\otimes \xi_1\otimes\ldots\otimes \xi_n
	$$
	for $\ a\in \A, \ \xi,\xi_1,\ldots\xi_n\in E$. For each $n\in \bN$, denote by $T^{(n)}: E^{\otimes n}\to \L(\F(E))$ the map uniquely determined on simple tensors by $T^{(n)}_{\xi_1 \otimes \ldots \otimes \xi_n} = T_{\xi_1} \circ \ldots \circ T_{\xi_n}$. When we have $\xi \in E^{\otimes n}$ we will occasionally abuse notations and write $T_{\xi}$ to mean $T^{(n)}_{\xi}$ and the degree will be clear from context.
	
	By \cite[Theorem 2.12]{MS98} the representation $(\iota,T)$ is universal in the sense that $\T(E) \cong C^*(\iota, T)$. Similarly, by \cite[Theorem 3.10]{MS98} the tensor algebra of $E$ coincides with $\overline{\Alg}(\iota,T)$, the norm-closed operator algebra generated by Fock creation operators. Hence we identify $\T_+(E)$ as the norm-closed operator subalgebra of $\T(E)$ generated by the image of $\iota$ and $T$.

	We may also use Fock space representation to obtain another description of the ideal of relations $\J_E$ generated by $\{ \psi_T(\phi_E(a))-\iota(a): a\in J_E \}$ which yields the Cuntz-Pimsner algebra $\O(E)$ of $E$. Let $\sigma: \L(\F(E))\to \L(\F(E))/\K(\F(E)J_E)$ be the canonical quotient map. By \cite[Proposition 6.5]{Kat04b} we get that $\O(E)\cong C^*(\sigma\circ \iota, \sigma\circ T)$. Thus, $\O(E)\cong \T(E)/\K(\F(E)J_E)$ so that $\J_E$ is identified with $\K(\F(E)J_E)$ and we can describe $\J_E$ as 
	$$
	\overline{\Span}\{T_{\xi} a T^*_{\eta} : \xi\in E^{\otimes n}, \ \eta \in E^{\otimes m}, \ a\in J_E, \ n,m\in \bN \}.
	$$
	
	\begin{Def}
	Let $E$ and $F$ be C*-correspondences over $\A$ and $\B$, respectively. We say that $E$ and $F$ are \textit{unitarily isomorphic} if there exist a surjective, isometric map $U: E\to F$ and a *-isomorphism $\rho: \A\to \B$, s.t. $U(a\cdot \xi\cdot b)=\rho(a)\cdot U(\xi)\cdot \rho(b)$ for all $a,b\in \A, \xi \in E$.
	\end{Def}
	
See \cite[Subsection 2.1]{DO18} for more on isomorphisms of C*-corres\-pondences. For a C*-correspondence over $\A$, we denote by $P_0\in \L(\F(E))$ the orthogonal projection onto $E^{\otimes 0} = \A$, and we let $\Psi : \T(E)\to \A$ be the compression given by $\Psi(x)=P_0xP_0$, for all $x\in \T(E)$. Notice that for $a\in \A$, if $\Psi(a)=0$ then $a=0$. The following folklore result is a strengthening of \cite[Lemma 4.6.24]{BO} which we obtain by using C*-envelope techniques.

\begin{Prop}\label{P: KatIdealPres}
	Let $E$ be a C*-correspondence over $\A$ and let $\B$ be any C*-algebra. Then $(\iota \otimes \id_{\B}, T\otimes \id_{\B})$ is a rigged representation which induces an isomorphism $\rho : \T(E\otimes \B) \rightarrow \T(E)\otimes\B$. The isomorphism $\rho$ maps $\T_+(E \otimes \B)$ onto $\T_+(E) \otimes \B$ and induces an isomorphism between $\O(E \otimes \B)$ and $\O(E) \otimes \B$. In particular, $\rho$ maps $\J_{E\otimes \B}$ to $\J_E \otimes \B$.
\end{Prop}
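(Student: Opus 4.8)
The plan is to exhibit $(\iota\otimes\id_\B,T\otimes\id_\B)$ as a spatial copy of the Fock representation of the external tensor product correspondence $E\otimes\B$ over $\A\otimes\B$ (with $\B$ regarded as a correspondence over itself), and then to read off the four assertions from the concrete descriptions of the Toeplitz, tensor and Cuntz--Pimsner algebras together with Corollary~\ref{c:envelope-commutes}. First I would check the two defining axioms of a rigged representation for $(\iota\otimes\id_\B,T\otimes\id_\B):E\otimes\B\to\T(E)\otimes\B$; both reduce immediately to the corresponding identities for $(\iota,T)$ tensored against multiplication in $\B$, using that the actions and inner product on $E\otimes\B$ are $(a\otimes b)\cdot(\xi\otimes c)\cdot(a'\otimes b')=a\xi a'\otimes bcb'$ and $\langle\xi\otimes c,\eta\otimes d\rangle=\langle\xi,\eta\rangle\otimes c^{*}d$. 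Complete contractivity of $T\otimes\id_\B$ holds because the operator space structure that $E\otimes\B$ inherits from $\D_{E\otimes\B}\cong\D_E\otimes\B$ is the minimal one, for which $\otimes$ is functorial on completely contractive maps. Universality of $\T(E\otimes\B)$ then supplies a surjective $*$-homomorphism $\rho:\T(E\otimes\B)\to\T(E)\otimes\B$ with $\rho(\iota'(a\otimes b))=\iota(a)\otimes b$ and $\rho(T'(\xi\otimes b))=T(\xi)\otimes b$, where $(\iota',T')$ denotes the universal rigged representation of $E\otimes\B$.

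For injectivity I would pass to the Fock picture. Since tensoring over $\A\otimes\B$ collapses the $\B$-factors, $(E\otimes\B)^{\otimes n}\cong E^{\otimes n}\otimes\B$ for every $n$, so there is a unitary $\F(E\otimes\B)\cong\F(E)\otimes\B$ of Hilbert $\A\otimes\B$-modules under which $\phi_{\F(E\otimes\B)}(a\otimes b)$ becomes $\phi_{\F(E)}(a)\otimes\lambda_b$ and $T'_{\xi\otimes b}$ becomes $T_\xi\otimes\lambda_b$, where $\lambda:\B\to\L(\B)$ is the faithful left-multiplication representation. Composing $\rho$ with the canonical isometric embedding $\T(E)\otimes\B\hookrightarrow\L(\F(E))\otimes\M(\B)\hookrightarrow\L(\F(E)\otimes\B)$ therefore reproduces, via this unitary, the Fock representation of $E\otimes\B$, which is faithful by \cite[Theorem~2.12]{MS98}; hence $\rho$ is injective. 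The claim for $\T_+$ is then immediate: $\rho$ sends $(T')^{(n)}(\zeta)$ with $\zeta\in(E\otimes\B)^{\otimes n}\cong E^{\otimes n}\otimes\B$ into $\T_+(E)\otimes\B$, and since $\T_+(E\otimes\B)=\overline{\Span}\{(T')^{(n)}(\zeta):n\ge0\}$ while $\T_+(E)\otimes\B=\overline{\Span}\{T^{(n)}(\xi)\otimes b\}$, this forces $\rho(\T_+(E\otimes\B))=\T_+(E)\otimes\B$.

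For the Cuntz--Pimsner assertion I would invoke the $C^*$-envelope. The previous step shows $\rho$ restricts to a completely isometric isomorphism $\T_+(E\otimes\B)\to\T_+(E)\otimes\B$, so by the identification of the $C^*$-envelope of a tensor algebra with its Cuntz--Pimsner algebra, together with Corollary~\ref{c:envelope-commutes},
\[
\O(E\otimes\B)=C^*_e(\T_+(E\otimes\B))\cong C^*_e\big(\T_+(E)\otimes\B\big)\cong C^*_e(\T_+(E))\otimes\B=\O(E)\otimes\B ,
\]
and by uniqueness of the $C^*$-envelope this $*$-isomorphism is the one extending $\rho|_{\T_+}$. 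Since the canonical quotients $\T(F)\to\O(F)$ restrict to the canonical completely isometric embeddings $\T_+(F)\hookrightarrow C^*_e(\T_+(F))$ and $\T_+(F)$ generates $\T(F)$ as a $C^*$-algebra, a density argument shows that $\rho$ intertwines $q_{E\otimes\B}:\T(E\otimes\B)\to\O(E\otimes\B)$ with $q_E\otimes\id_\B:\T(E)\otimes\B\to\O(E)\otimes\B$; hence $\rho$ carries $\J_{E\otimes\B}=\ker q_{E\otimes\B}$ onto $\ker(q_E\otimes\id_\B)$. To conclude I would identify $\ker(q_E\otimes\id_\B)$ with $\J_E\otimes\B=\overline{\J_E\odot\B}$ using the Fock description $\J_F=\K(\F(F)J_F)$ recalled above, the identity $J_{E\otimes\B}=J_E\otimes\B$ for Katsura ideals, and the external-tensor identifications $\F(E\otimes\B)J_{E\otimes\B}\cong(\F(E)J_E)\otimes\B$ and $\K\big((\F(E)J_E)\otimes\B\big)\cong\K(\F(E)J_E)\otimes\B$.

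The hard part is precisely this last identification: controlling the Katsura ideal and the ideal $\J_{E\otimes\B}$ under tensoring, i.e.\ verifying $J_{E\otimes\B}=J_E\otimes\B$ and that $\rho$ sends $\J_{E\otimes\B}$ exactly onto $\overline{\J_E\odot\B}$ --- equivalently, that the extension $0\to\J_E\to\T(E)\to\O(E)\to0$ survives $\otimes\B$ --- because for a general $C^*$-algebra $\B$ minimal tensor products need not be exact and a naive slice-map computation does not suffice. This is exactly the point where the $C^*$-envelope argument of the third paragraph (rather than an analysis carried out inside $\T(E)$) does the essential work. The remaining ingredients --- the Fock identifications, the embeddings $\L(\F(E))\otimes\M(\B)\hookrightarrow\L(\F(E)\otimes\B)$, and $\D_{E\otimes\B}\cong\D_E\otimes\B$ --- are routine bookkeeping with external tensor products of Hilbert modules.
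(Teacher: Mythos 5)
Your skeleton is essentially the paper's: universality of $\T(E\otimes\B)$ gives the surjection $\rho$, the spatial tensor product handles $\T_+$, and the Cuntz--Pimsner statement is obtained exactly as in the paper from $C^*_e(\T_+(\cdot))\cong\O(\cdot)$ together with Corollary \ref{c:envelope-commutes}, followed by checking the intertwining $(q_E\otimes\id_\B)\circ\rho=\widetilde{\rho}\circ q_{E\otimes\B}$ on generators. The one genuine departure is injectivity of $\rho$: the paper applies the gauge-invariant uniqueness theorem \cite[Theorem 6.2]{Kat04a} to the gauge action $\gamma_z\otimes\id_\B$ on $\T(E)\otimes\B$, killing the obstruction ideal with the vacuum compression $\Psi\otimes\id_\B$, whereas you identify $\F(E\otimes\B)\cong\F(E)\otimes\B$ and factor the (faithful, by \cite[Theorem 2.12]{MS98}) Fock representation of $E\otimes\B$ through $\rho$ and the embedding of $\T(E)\otimes\B$ into $\L(\F(E)\otimes\B)$. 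That alternative is sound and more concrete; its only cost is the multiplier-algebra embedding $\L(\F(E))\otimes M(\B)\hookrightarrow\L(\F(E)\otimes\B)$, which is indeed standard exterior-tensor bookkeeping, while the paper's route avoids Fock-space identifications at the price of citing Katsura's uniqueness theorem.

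The last step is where your plan has a gap. After the intertwining you already have $\rho(\J_{E\otimes\B})=\rho(\ker q_{E\otimes\B})=\ker(q_E\otimes\id_\B)$, and this is precisely how the paper concludes: the identification of $\J_{E\otimes\B}$ with $\J_E\otimes\B$ is read directly off that relation via the isomorphism $\widetilde{\rho}$ coming from the $C^*$-envelope. Your proposed extra step --- identifying $\ker(q_E\otimes\id_\B)$ with $\overline{\J_E\odot\B}$ by means of the identity $J_{E\otimes\B}=J_E\otimes\B$ and the identifications $\F(E\otimes\B)J_{E\otimes\B}\cong(\F(E)J_E)\otimes\B$ and $\K\big((\F(E)J_E)\otimes\B\big)\cong\K(\F(E)J_E)\otimes\B$ --- is not available here and undermines your own (correct) observation that exactness is the delicate point. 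The equality of Katsura ideals $J_{E\otimes\B}=J_E\otimes\B$ is exactly Corollary \ref{c:tensor-katsura-preserved}: it is proved in the paper only under the hypothesis that $\B$ is exact (since $\ker(\phi_E\otimes\id_\B)$ need not equal $\ker(\phi_E)\otimes\B$ otherwise), and its proof \emph{uses} the present proposition, so invoking it in this proof is circular and restricts you to exact $\B$ while the statement is asserted for arbitrary $\B$. The fix is simply to delete that ingredient and end as the paper does, with the kernel identification supplied by the intertwining relation; the $C^*$-envelope argument, not any slice-map or Katsura-ideal computation, is what carries the ideal statement.
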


\begin{proof}
It is standard to verify that $(\iota\otimes \id_\B, T\otimes\id_\B)$ is a rigged representation of the C*-correspondence $E\otimes \B$ over $\A \otimes \B$.
	By universality of $\T(E\otimes \B)$, we have a surjection $\rho: \T(E\otimes\B)\to \T(E)\otimes \B$ given by $\rho(T_{\xi \otimes b}) = T_{\xi} \otimes b$ for $\xi \in E$ and $b\in \B$. Since $\T(E)\otimes \B$ admits the gauge action $z \mapsto \gamma_z\otimes \id_\B$, we know from theorem \cite[Theorem 6.2]{Kat04a} that $\rho$ is an isomorphism if and only if
	\[
	I_{(\iota\otimes \id_\B, T\otimes\id_\B)}'\coloneqq\{ \ x\in \A\otimes\B \ | \ (\iota\otimes\id_\B)(x)\in B_1 \ \}
	\]
	is trivial, where 
	\[
	B_1=\overline{\Span} \{ \ (T\otimes \id_\B)(x)(T\otimes\id_\B)(y)^* \ | \ x,y\in E\otimes \B \ \}. 
	\]
So we show that $I_{(\iota\otimes \id_\B, T\otimes\id_\B)}'$ is trivial.
	Indeed, let $x \in (\A \otimes \B) \cap B_1$. We clearly have that $(\Psi \otimes\id_\B)(x)=0 $ as $x\in B_1$, while $(\Psi \otimes\id_\B)(x) = x$ as $x \in \A \otimes \B$. Thus, $x = 0$ and $\rho$ is an isomorphism.

By injectivity of the minimal tensor product of operator algebras, it follows that the natural isomorphism $\T(E)\otimes \B\cong \T(E\otimes \B)$ restricts to an isomorphism of operator algebras $\T_+(E)\otimes \B\cong \T_+(E\otimes \B)$. Thus, by Corollary \ref{c:envelope-commutes} and \cite{KK04} we have the following chain of isomorphisms
$$
\O(E \otimes \B) \cong C^*_e(\T_+(E \otimes \B)) \cong C^*_e(\T_+(E) \otimes \B) 
$$
$$
\cong C^*_e(\T_+(E)) \otimes \B \cong \O(E) \otimes \B.
$$
This yields an isomorphism $\widetilde{\rho} : \O(E \otimes \B) \rightarrow \O(E) \otimes \B$ which sends $S_{\xi \otimes b}$ to $S_{\xi} \otimes b$ for every $\xi \in E$ and $b \in \B$ where $(\iota,S)$ is a rigged covariant representation such that $\O(E) \cong C^*(\iota,S)$. Let $q_E \otimes \id_{\B} : \T(E) \otimes \B \rightarrow \O(E) \otimes \B$ and $q_{E \otimes \B} : \T(E \otimes \B) \rightarrow \O(E \otimes \B)$ denote the canonical quotient maps. Since this occurs on generators, we see that $(q_E \otimes \id_{\B}) \circ \rho = \widetilde{\rho} \circ q_{E \otimes \B}$. Hence, it follows that $\J_{E\otimes \B}$ is identified with $\J_E \otimes \B$ via $\rho$.
\end{proof}

\begin{Cor} \label{c:tensor-katsura-preserved}
Let $E$ be a C*-correspondence over $\A$, and let $\B$ be an exact C*-algebra. Then the natural isomorphism $\rho: \T(E\otimes \B) \rightarrow \T(E)\otimes\B$ of Proposition \ref{P: KatIdealPres} maps $J_{E\otimes\B}$ to $J_E\otimes \B$.
\end{Cor}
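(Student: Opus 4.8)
The plan is to show $J_{E\otimes\B}=J_E\otimes\B$ as ideals of $\A\otimes\B$; since the isomorphism $\rho$ of Proposition \ref{P: KatIdealPres} restricts to the identity on the coefficient algebra $\A\otimes\B$ (it sends $\iota_{E\otimes\B}(x)$ to $(\iota_E\otimes\id_\B)(x)$), this is exactly the assertion. I would prove the two inclusions separately, the exactness of $\B$ being needed only for one of them.

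For $J_E\otimes\B\subseteq J_{E\otimes\B}$ it is enough, as $J_{E\otimes\B}$ is a closed ideal, to check that $j\otimes b\in J_{E\otimes\B}$ for $j\in J_E$, $b\in\B$. Under the standard identifications for external tensor products of Hilbert modules, $\K(E\otimes\B)\cong\K(E)\otimes\B$ and the left action is $\phi_{E\otimes\B}(j\otimes b)=\phi_E(j)\otimes b$ (identifying $\B$ with $\K(\B)$), so $\phi_{E\otimes\B}(j\otimes b)\in\K(E)\otimes\B=\K(E\otimes\B)$ because $\phi_E(j)\in\K(E)$; this is the first defining condition of Katsura's ideal. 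For the second condition I would first establish $\ker\phi_{E\otimes\B}=(\ker\phi_E)\otimes\B$: writing $\phi_E=\iota\circ q$ with $q\colon\A\to\A/\ker\phi_E$ the quotient map and $\iota$ injective, the map $\iota\otimes\id_\B$ is injective by injectivity of the minimal tensor product and $q\otimes\id_\B$ has kernel $(\ker\phi_E)\otimes\B$ precisely because $\B$ is exact. Then $(j\otimes b)(c\otimes d)=jc\otimes bd=0$ for all $c\in\ker\phi_E$, $d\in\B$, so $(j\otimes b)\cdot\ker\phi_{E\otimes\B}=0$ and $j\otimes b\in J_{E\otimes\B}$.

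For $J_{E\otimes\B}\subseteq J_E\otimes\B$ I would reuse Proposition \ref{P: KatIdealPres}. The key is the identity $\Psi_F(\J_F)=J_F$, valid for every $C^*$-correspondence $F$, where $\Psi_F\colon\T(F)\to(\text{coefficient algebra of }F)$ is the degree-$0$ compression $x\mapsto P_0xP_0$. This follows from $\J_F=\K(\F(F)J_F)$ together with the observations that the component of $\F(F)J_F$ in $F^{\otimes0}$ is $J_F$, that the degree-$0$ compression of a rank-one operator $\theta_{\zeta,\eta}$ with $\zeta,\eta\in\F(F)J_F$ is left multiplication by $\zeta_0\eta_0^*\in J_F$ (degree-$0$ components), whence $\Psi_F(\J_F)\subseteq J_F$, and that $\Psi_F(\theta_{j^{1/2},j^{1/2}})$ is left multiplication by $j$ for $j\in J_F$, $j\geq 0$, whence the reverse inclusion by linearity. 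Next, the maps $\Psi_{E\otimes\B}$ and $(\Psi_E\otimes\id_\B)\circ\rho$ from $\T(E\otimes\B)$ to $\A\otimes\B$ agree: both are completely contractive, both kill every monomial $T_\xi T_\eta^*$ in which $\xi$ or $\eta$ has positive degree (since then $P_0T_\xi=0$, resp. $T_\eta^*P_0=0$), and both restrict to the identity on the degree-$0$ part $\A\otimes\B$, so they coincide on a dense subspace. Now combining these with $\rho(\J_{E\otimes\B})=\J_E\otimes\B$ from Proposition \ref{P: KatIdealPres} and with $\rho|_{\A\otimes\B}=\id$,
\[
\rho(J_{E\otimes\B})=J_{E\otimes\B}=\Psi_{E\otimes\B}(\J_{E\otimes\B})=(\Psi_E\otimes\id_\B)\big(\rho(\J_{E\otimes\B})\big)=(\Psi_E\otimes\id_\B)(\J_E\otimes\B)\subseteq J_E\otimes\B,
\]
the final inclusion because $\Psi_E(\J_E)\subseteq J_E$ and $J_E\otimes\B$ is closed. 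Together with the previous paragraph this gives $\rho(J_{E\otimes\B})=J_E\otimes\B$.

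The main obstacle will be the Hilbert-module bookkeeping for the external tensor product — in particular the identification $\ker\phi_{E\otimes\B}=(\ker\phi_E)\otimes\B$, which is exactly where the exactness of $\B$ enters (through exactness of $0\to I\otimes_{\min}\B\to\A\otimes_{\min}\B\to(\A/I)\otimes_{\min}\B\to0$ for every ideal $I$), together with the verification that the degree-$0$ compressions are intertwined by $\rho$. The rest is formal.
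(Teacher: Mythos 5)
Your proposal is correct, and for the harder inclusion it takes a genuinely different route from the paper. The forward inclusion $J_E\otimes\B\subseteq J_{E\otimes\B}$ is essentially the paper's argument: check the two Katsura conditions on elementary tensors, with exactness of $\B$ entering exactly through $\ker(\phi_E\otimes\id_\B)=\ker(\phi_E)\otimes\B$. For the reverse inclusion, however, the paper tensors the defining short exact sequence $0\to\J_E\to\T(E)\to\O(E)\to 0$ with $\B$ (using exactness a second time), identifies $\J_{E\otimes\B}$ as the ideal generated by the covariance relations over $J_E\otimes\B$, concludes that the relative Cuntz--Pimsner algebra $\O(E\otimes\B, J_E\otimes\B)$ maps injectively onto $\O(E\otimes\B)$, and then invokes Katsura's maximality result \cite[Corollary 11.8]{Kat07}. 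You instead argue elementarily with the vacuum compression: the identity $\Psi_F(\J_F)=J_F$ (both directions of which you verify correctly, via the degree-zero components of $\F(F)J_F$ and the rank-one operators $\theta_{j^{1/2},j^{1/2}}$), the intertwining $\Psi_{E\otimes\B}=(\Psi_E\otimes\id_\B)\circ\rho$ checked on the dense span of monomials, and the conclusion $\rho(\J_{E\otimes\B})=\J_E\otimes\B$ already recorded in Proposition \ref{P: KatIdealPres}. Your route buys self-containedness --- no relative Cuntz--Pimsner machinery or external citation beyond \ref{P: KatIdealPres} --- and it localizes the use of exactness to the single kernel identification in the forward inclusion, making transparent where the hypothesis is really needed; the paper's route is shorter once Katsura's ideal-structure results \cite{Kat07} are taken as given and stays within the framework used elsewhere in the paper. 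Both proofs are sound.
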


\begin{proof}
First note that since $\phi_{E \otimes \B} = \phi_E \otimes \id_{\B}$ under the identification $\K(E \otimes \B) \cong  \K(E) \otimes \B$, we see that $\ker (\phi_{E\otimes \B}) = \ker (\phi_E) \otimes \B$.

Next, if $a \in J_E$, then $\phi_E(a) \in \K(E)$ and $ac = 0$ for all $c \in \ker \phi_E$. Thus, for any $b \in \B$ we have that $\phi_{E \otimes \B}(a \otimes b) = \phi(a) \otimes b \in \K(E) \otimes \B \cong \K(E \otimes \B)$. Furthermore, by exactness of $\B$ we get that $\ker (\phi_E \otimes \id_{\B}) = \ker(\phi_E) \otimes \B$. Thus, by verifying this on simple tensors we get for any $c \in \ker (\phi_E \otimes \id_{\B})$ that $(a \otimes b) c = 0$. Hence, we see that $J_E \otimes \B \subseteq J_{E \otimes \B}$. 

Conversely, again by exactness of $\B$, we get the following short exact sequence
$$
\begin{tikzcd}
	0 \arrow[r] & \J_E \otimes \B \arrow[r]  & \T(E) \otimes B \arrow[r] & \O(E) \otimes \B  \arrow[r] & 0
	\end{tikzcd}.
$$
Thus, $\J_E \otimes \B$ is the ideal generated by
$$
\{ \ (\psi_{T^E} \otimes \id_{\B})((\phi_E \otimes \id_{\B})(c)) - (\iota^E \otimes \id_{\B})(c) \ | \ c \in J_E \otimes \B \ \}.
$$
From Proposition \ref{P: KatIdealPres}, as $\J_E \otimes \B$ is corresponds bijectively to $\J_{E \otimes \B}$ via $\rho$, we get that $\J_{E \otimes \B}$ is the ideal generated by 
$$
\{ \ \psi_{T^{E \otimes \B}}(\phi_{E \otimes \B}(c)) - (\iota^{E\otimes \B})(c) \ | \ c \in J_{E} \otimes \B \ \}.
$$
Hence, we see that the surjection from the relative Cuntz-Pimsner algebra $\O(E \otimes \B, J_E \otimes \B)$ (See \cite[Section 11]{Kat07}) onto $\O(E \otimes \B)$ is injective. From \cite[Corollary 11.8]{Kat07} we deduce that $J_E \otimes \B = J_{E \otimes \B}$.
 
\end{proof}

	
\section{Hierarchy of isomorphism problems}\label{S: Hierarchy of isomorphism problems}
	
In this section we establish the aforementioned hierarchy between different notions of isomorphisms of Toeplitz and tensor algebras.
	
	\begin{Thm} \label{T:graded-to-ses}
		Let $E$ and $F$ be $C^*$-correspondences over $\A$ and $\B$ respectively. Suppose that $\varphi : \T(E) \rightarrow \T(F)$ is a base-preserving graded *-isomorphism. Then $\varphi$ is a base-preserving s.e.s. isomorphism.
	\end{Thm}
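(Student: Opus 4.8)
The plan is to establish the equivalent assertion $\varphi(\J_E)=\J_F$, base-preservation being part of the hypothesis, by exploiting the quotient maps $q_E\colon\T(E)\to\O(E)$ and $q_F\colon\T(F)\to\O(F)$, whose kernels are $\J_E$ and $\J_F$.

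The first step is to form $\widetilde\varphi:=q_F\circ\varphi\colon\T(E)\to\O(F)$ and record two of its properties. It is \emph{equivariant}: $\varphi$ is graded, hence equivariant for the circle actions, and $q_F$ is equivariant because the gauge action descends to $\O(F)$; thus $\ker\widetilde\varphi=\varphi^{-1}(\J_F)$ is a gauge-invariant ideal of $\T(E)$. It is \emph{injective on} $\iota^E(\A)$: since $\varphi$ is base-preserving, $\varphi(\iota^E(\A))=\iota^F(\B)$, and $q_F$ restricted to $\iota^F(\B)$ is injective because the coefficient algebra of a $C^*$-correspondence embeds into its Cuntz--Pimsner algebra (Katsura's refinement \cite{Kat04b} makes the universal covariant representation have injective coefficient map). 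Hence $\ker\widetilde\varphi\cap\iota^E(\A)=\{0\}$.

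The heart of the argument — and the step I expect to require the most care — is the general fact that \emph{a gauge-invariant ideal $I$ of $\T(E)$ with $I\cap\iota^E(\A)=\{0\}$ is contained in $\J_E$}. For this I would invoke the gauge-invariant uniqueness theorem: the quotient $\T(E)/I$ is generated by the images of $\iota^E$ and $T^E$, which form an injective rigged representation $(\pi,t)$ of $E$ carrying a gauge action, and by \cite[Theorem 6.2]{Kat04a} the kernel of $\pi\times t$ is the ideal generated by $\{\iota^E(a)-\psi_{T^E}(\phi_E(a)):a\in I'\}$, where $I'=\{a\in\A:\pi(a)\in\psi_t(\K(E))\}$. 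One then checks $I'\subseteq J_E$: if $\pi(a)=\psi_t(k)$ for $k\in\K(E)$, comparing $\pi(a)t(\xi)=t(\phi_E(a)\xi)$ with $\psi_t(k)t(\xi)=t(k\xi)$ and using that $t$ is isometric forces $k=\phi_E(a)\in\K(E)$, while $\pi(ab)=\psi_t(\phi_E(a)\phi_E(b))=0$ for $b\in\ker\phi_E$ gives $a\perp\ker\phi_E$. Since the relations defining $\J_E$ range over all of $J_E$, the ideal generated by those over $I'$ lies inside $\J_E$, so $I\subseteq\J_E$. Applying this to $I=\ker\widetilde\varphi$ gives $\varphi^{-1}(\J_F)\subseteq\J_E$; running the same argument with the base-preserving graded $*$-isomorphism $\varphi^{-1}$ in place of $\varphi$ gives $\varphi(\J_E)\subseteq\J_F$; the two inclusions combine to $\varphi(\J_E)=\J_F$, which is what is wanted.

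It is worth noting why the naive strategy breaks down, since this explains the detour above. One would like to restrict $\varphi$ to the tensor subalgebras and transport $\J_E$ through the identification $\O(E)\cong C_e^*(\T_+(E))$ as a Shilov boundary ideal. But a base-preserving graded $*$-isomorphism need not map $\T_+(E)$ onto $\T_+(F)$: already for $E=F$ the one-dimensional correspondence over $\bC$, conjugation of the Toeplitz algebra by a diagonal unitary whose diagonal is unimodular and convergent but not geometric is a base-preserving graded $*$-automorphism whose restriction to $\T_+(E)$ is not surjective onto $\T_+(F)$. Passing through gauge-invariant uniqueness avoids depending on any such restriction.
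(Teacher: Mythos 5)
Your argument is, at its core, the same as the paper's, just run in the opposite direction: the paper applies Katsura's co-universality result \cite[Proposition 7.14]{Kat07} to the injective, gauge-compatible representation of $F$ obtained from $q_E\circ\varphi^{-1}$, while you phrase the same content as ``every gauge-invariant ideal $I$ of $\T(E)$ with $I\cap\iota^E(\A)=\{0\}$ is contained in $\J_E$'' and apply it to $\ker(q_F\circ\varphi)=\varphi^{-1}(\J_F)$. The reductions you make are all correct: equivariance of $\varphi$ and of $q_F$, injectivity of $q_F$ on $\iota^F(\B)$ (this is where Katsura's choice $J_F\subseteq(\ker\phi_F)^\perp$ enters), the verification that $I'_{(\pi,t)}\subseteq J_E$ for an injective rigged $(\pi,t)$, the symmetric argument with $\varphi^{-1}$, and your closing example showing that a base-preserving graded automorphism of the Toeplitz algebra need not preserve the tensor algebra (which is indeed why one cannot simply restrict to $\T_+$).

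The one genuine gap is the justification of the crux. You attribute to \cite[Theorem 6.2]{Kat04a} the statement that for the quotient representation $(\pi,t)$ of $\T(E)/I$ the kernel of $\pi\times t$ equals the ideal generated by $\{\iota^E(a)-\psi_{T^E}(\phi_E(a)):a\in I'_{(\pi,t)}\}$. That theorem is only the Toeplitz-algebra uniqueness criterion: the induced map is \emph{injective} if and only if $(\pi,t)$ admits a gauge action and $I'_{(\pi,t)}=0$; it says nothing about the shape of a nonzero kernel. The inclusion you need, $\ker(\pi\times t)\subseteq\J_E$ (equivalently, that the quotient by a gauge-invariant ideal missing $\A$ is a relative Cuntz--Pimsner algebra $\O(E,J)$ with $J\subseteq J_E$), is true but is exactly the gauge-invariant uniqueness theorem for relative Cuntz--Pimsner algebras (Fowler--Muhly--Raeburn, or \cite[Section 11]{Kat07}), and it is the hard content of the step -- your computation only establishes the easy inclusion $I'_{(\pi,t)}\subseteq J_E$ and hence that the relations over $I'_{(\pi,t)}$ lie in $\J_E$. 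The cleanest repair is to bypass the kernel description entirely and invoke the co-universal property of the Cuntz--Pimsner algebra for injective representations admitting a gauge action, \cite[Proposition 7.14]{Kat07}, applied to $q_F\circ\varphi$ (or, as the paper does, to $q_E\circ\varphi^{-1}$); with that citation in place your proof is complete and coincides with the paper's.
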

	
	\begin{proof}
		Suppose $\varphi : \T(E) \rightarrow \T(F)$ is a base-preserving graded *-isomor\-phism. Let $q_E : \T(E) \rightarrow \O(E)$ and $q_F : \T(F) \rightarrow \O(F)$ be the canonical quotient maps. Since $\varphi$ is graded, it follows by the discussion after Definition \ref{d:GradIso} that $\varphi$ is equivariant.
		
		The map $\psi\coloneqq q_E\circ \varphi^{-1}: \T(F)\rightarrow \O(E)$ then restricts to an injective rigged representation $(\psi\circ \iota_B, \psi\circ T^F)$ which admits a gauge action, where $(\iota_B, T^F)$ denotes the Fock representation. Notice that $C^*(\psi\circ \iota_B,\psi\circ T^F)=\O(E)$. 
		By \cite[Proposition 7.14]{Kat07} there is an induced surjective *-homomorphism $\rho : \O(E) \rightarrow \O(F)$ such that $\rho \circ \psi = q_F$, namely $\rho\circ q_E=q_F\circ \varphi$. This implies that $\varphi( \K(\F(E)J_E))\subseteq   \K(\F(F)J_F)$. The symmetric argument with $\varphi$ instead of $\varphi^{-1}$ then shows the reversed inclusion.
	\end{proof}
	
	Let $F$ be a $C^*$-correspondence over a $C^*$-algebra $\B$ and assume that $\rho: \A\to \B$ is a *-isomorphism. Then $F$ can be realized naturally, as a $C^*$-correspondence $F_{\rho}$ over $\A$, via $\rho$.
	The new operations are given by 
	$ {\langle \xi,\eta \rangle}_{\rho} \coloneqq \rho^{-1}({\langle \xi,\eta \rangle}_\A), \text { for } \xi,\eta\in E; \ a \cdot \xi= \rho(a)\cdot \xi; \text{ and } \xi\cdot a \coloneqq \xi \cdot \rho(a), \text{ for all } \xi\in F \text{ and } a\in \A$. The identity map $U:F \to F_{\rho}$ is then a unitary isomorphism. 
	
	This leads to the following simple reduction for base-preserving isomorphisms. Suppose $F$ is a $C^*$-correspondence over $\A$ and $\rho : \A \rightarrow \B$ is a *-isomorphism. Then we have that
	\begin{enumerate}
		\item
		$\T_+(F)$ and $\T_+(F_{\rho})$ are base-preserving graded isomorphic.
		\item
		$\T(F)$ and $\T(F_{\rho})$ are base-preserving graded isomorphic.
		\item
		$\T(F)$ and $\T(F_{\rho})$ are base-preserving s.e.s. isomorphic.
	\end{enumerate}
	Hence, when $E$ and $F$ are $C^*$-correspondences over $\A$ and $\B$ respectively, and $\phi$ is a base-preserving isomorphism from an algebra of $E$ to an algebra of $F$, after composing with one of the isomorphisms above, we may assume that $\phi$ is the identity on the base algebra. For more details, we refer the reader to the discussion after \cite[Definition 2.1]{DO18}. We next recall some definitions and results from Muhly and Solel \cite{MS98}. The following is \cite[Definition 3.1]{MS98}. 
	
	\begin{Def} \label{D:rig-dil}
		Let $(\pi, t)$ be a completely contractive representation of a $C^*$-correspondence $E$ over a $C^*$-algebra $\A$ on a Hilbert space $\H$.
		A \textit{rigged dilation} of $(\pi,t)$ is a rigged representation $(\sigma,s)$ of $E$ on a Hilbert space $\K\supseteq \H$, s.t.
		\begin{enumerate}
			\item $\sigma$ dilates $\pi$, i.e. $\pi(a)=P_{\H}\sigma(a)|_{\H}$, for all $a\in \A$;
			\item $s$ dilates $t$, i.e. $t(\xi)=P_{\H}s(\xi)|_{\H}$, for all $\xi\in E$; and
			\item $\H$ is co-invariant under each $s(\xi)$, $\xi\in E$.
		\end{enumerate}
	\end{Def}
	Since $\sigma$ is a $*$-homomorphism that dilates $\pi$, it is easy to see that $\sigma$ must be of the form $\pi\oplus\psi$ for some $*$-representation $\psi$. In \cite[Theorem 3.3]{MS98} Muhly and Solel provided a generalization of Nagy--Foias dilation to representations of $C^*$-correspondences. This theorem shows that every completely contractive representation of $E$ admits a rigged dilation. As a consequence, we have a one-to-one correspondence between completely contractive representations of $E$ and representations of $\T_+(E)$ as shown in \cite[Theorem 3.10]{MS98}. More precisely, to every completely contractive representation $(\pi,t)$ of a correspondence $E$ over a $C^*$-algebra $\A$, there is a unique completely contractive representation $\rho$ of $\T_+(E)$, satisfying:
		\begin{enumerate}
			\item $\rho(T_{\xi})=t(\xi)$, for $\xi\in E$; and
			\item $\rho(a)=\pi(a)$, for $a\in \A$.
		\end{enumerate}
The map $(\pi,t)\mapsto \rho$ is then bijective onto the all representations of $\T_+(E)$.

	Thus, for a representation $\rho: \T_+(E)\to B(\H)$ let $(\pi,t)$ be the associated completely contractive representation of $E$ given by $\pi = \rho \circ \iota$ and $t = \rho \circ T$. Then $(\pi,t)$ has a rigged dilation $(\sigma,s)$. Thus, $(\sigma, s)$ induces a representation $\widetilde{\rho} :\T_+(E)\to B(\K)$ such that $\widetilde{\rho}(a)=\sigma(a)$ for $a\in \A$, and $\widetilde{\rho}(T_{\xi})=s(\xi)$ for $\xi\in E$. It follows by Definition \ref{D:rig-dil} and the succeeding discussion that $\H$ is co-invariant for $\widetilde{\rho}$. That is, $\widetilde{\rho}$ is a c-dilation of $\rho$. The following can be obtained by combining \cite[Proposition 4.2]{MS98} and \cite[Corollary 4.7]{MS98} using the language of orthoprojective modules. For the sake of posterity we provide a direct proof.
	
	\begin{Prop}\label{p:c-dil-rigged}
		Let $E$ be a $C^*$-correspondence over a $C^*$-algebra $\A$, and let $\rho:\T_+(E)\to B(\H)$ be a representation. Then $\rho$ is c-dilation extreme if and only if $(\rho\circ\iota,\rho\circ T)$ is a rigged representation of $E$.
	\end{Prop}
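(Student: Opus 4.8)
The plan is to prove the two implications separately, using the dilation-theoretic machinery recalled just before the statement together with Proposition~\ref{p:c-dil-rigged}'s companion facts about rigged dilations and c-dilations.

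\emph{Suppose $(\rho\circ\iota,\rho\circ T)$ is a rigged representation.} Set $\pi=\rho\circ\iota$ and $t=\rho\circ T$. I want to show $\rho$ is c-dilation extreme, so let $\widetilde\rho:\T_+(E)\to B(\K)$ be a c-dilation of $\rho$, with associated completely contractive representation $(\sigma,s)=(\widetilde\rho\circ\iota,\widetilde\rho\circ T)$. Co-invariance of $\H$ for $\widetilde\rho(\T_+(E))$ forces $\H$ to be co-invariant for each $s(\xi)$ and for $\sigma(\A)$; since $\sigma$ is a $*$-homomorphism and $\H$ is co-invariant, $\H$ actually reduces $\sigma$, so $\sigma=\pi\oplus\psi_0$ for a $*$-representation $\psi_0$ on $\H^\perp$, and $P_\H s(\xi)|_\H=t(\xi)$. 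Thus $(\sigma,s)$ is a rigged dilation of the \emph{rigged} representation $(\pi,t)$ in the sense of Definition~\ref{D:rig-dil}. The key point is that a rigged representation is already ``rigid'': writing $s(\xi)$ as a $2\times2$ operator matrix with respect to $\K=\H\oplus\H^\perp$, the upper-left corner is $t(\xi)$ and the upper-right corner is $0$ by co-invariance; computing $s(\xi)^*s(\xi)=\sigma(\langle\xi,\xi\rangle)$ and comparing $(1,1)$-entries gives $t(\xi)^*t(\xi)+(\text{lower-left})^*(\text{lower-left})=\pi(\langle\xi,\xi\rangle)=t(\xi)^*t(\xi)$, where the last equality uses that $(\pi,t)$ is rigged. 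Hence the lower-left corner of $s(\xi)$ vanishes for every $\xi\in E$, i.e.\ $\H$ is reducing for every $s(\xi)$; together with $\sigma=\pi\oplus\psi_0$ this shows $\H$ reduces $\widetilde\rho(\T_+(E))$, so $\widetilde\rho=\rho\oplus\psi$ where $\psi$ is the representation of $\T_+(E)$ determined by $(\psi_0,s|_{\H^\perp})$. Therefore $\rho$ is c-dilation extreme.

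\emph{Conversely, suppose $\rho$ is c-dilation extreme.} As recalled in the discussion preceding the Proposition, the completely contractive representation $(\pi,t)=(\rho\circ\iota,\rho\circ T)$ of $E$ has a rigged dilation $(\sigma,s)$ on some $\K\supseteq\H$ (via \cite[Theorem 3.3]{MS98}), and the induced representation $\widetilde\rho:\T_+(E)\to B(\K)$ with $\widetilde\rho\circ\iota=\sigma$, $\widetilde\rho\circ T=s$ is a c-dilation of $\rho$. Since $\rho$ is c-dilation extreme, $\widetilde\rho=\rho\oplus\psi$ for some representation $\psi$; in particular $\H$ reduces $\widetilde\rho(\T_+(E))$, hence reduces each $s(\xi)=\widetilde\rho(T_\xi)$ and $\sigma(a)=\widetilde\rho(\iota(a))$. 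But $(\sigma,s)$ is rigged, so $s(\xi)^*s(\eta)=\sigma(\langle\xi,\eta\rangle)$; compressing both sides to $\H$ and using that $\H$ reduces $\sigma$ and each $s(\cdot)$ yields
$$
t(\xi)^*t(\eta)=\big(P_\H s(\xi)|_\H\big)^*\big(P_\H s(\eta)|_\H\big)=P_\H s(\xi)^*s(\eta)|_\H=P_\H\sigma(\langle\xi,\eta\rangle)|_\H=\pi(\langle\xi,\eta\rangle).
$$
Thus $(\pi,t)=(\rho\circ\iota,\rho\circ T)$ satisfies axiom~(2) of a rigged representation; axiom~(1) holds automatically since $\rho$ is a homomorphism and $(\iota,T)$ already satisfies it. Hence $(\rho\circ\iota,\rho\circ T)$ is rigged.

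\emph{Main obstacle.} The routine parts are the operator-matrix bookkeeping; the step requiring the most care is making precise that a c-dilation $\widetilde\rho$ of $\rho$, when restricted to the copy of $\A$ and to $E$, genuinely gives a \emph{rigged dilation} in the sense of Definition~\ref{D:rig-dil} — i.e.\ that $\H$ is co-invariant for each $s(\xi)$ individually (not just for products of generators) — and, in the forward direction, that the splitting $\sigma=\pi\oplus\psi_0$ combined with the vanishing of the off-diagonal corners of the $s(\xi)$ really assembles into a direct-sum decomposition $\widetilde\rho=\rho\oplus\psi$ at the level of $\T_+(E)$-representations. Both follow from the universal property identifying representations of $\T_+(E)$ with completely contractive representations of $E$ (\cite[Theorem 3.10]{MS98}), which guarantees that a pair $(\psi_0,s|_{\H^\perp})$ on the complement does integrate to a representation $\psi$ of $\T_+(E)$; one just has to check the obvious compatibility of compressions and direct sums under this correspondence.
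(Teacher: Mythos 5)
Your forward direction (c-dilation extreme $\Rightarrow$ rigged) is fine and is essentially the paper's argument: dilate $(\rho\circ\iota,\rho\circ T)$ to a rigged representation via \cite[Theorem 3.3]{MS98}, note the induced representation of $\T_+(E)$ is a c-dilation, use extremity to split it, and compress the rigged relations to the reducing subspace $\H$.

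The other direction has a genuine gap. Given an arbitrary c-dilation $\widetilde\rho$ of $\rho$, you assert that $(\sigma,s)=(\widetilde\rho\circ\iota,\widetilde\rho\circ T)$ is a \emph{rigged} dilation in the sense of Definition \ref{D:rig-dil}, and your corner computation hinges on the identity $s(\xi)^*s(\xi)=\sigma(\langle\xi,\xi\rangle)$. But the bijection of \cite[Theorem 3.10]{MS98} only tells you that $(\sigma,s)$ is a \emph{completely contractive} representation of $E$; nothing forces it to be rigged, and in general it is not. For instance, with $E=\A=\bC$ and $\rho$ corresponding to an isometry $V$ on $\H$, the representation $\widetilde\rho$ corresponding to the contraction $V\oplus\tfrac12$ on $\H\oplus\bC$ is a c-dilation of $\rho$, yet $s(1)^*s(1)=V^*V\oplus\tfrac14\neq\sigma(\langle 1,1\rangle)$. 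So the step ``comparing $(1,1)$-entries of $s(\xi)^*s(\xi)=\sigma(\langle\xi,\xi\rangle)$'' is not available as written. There are two standard repairs. One is the paper's route: first c-dilate $\widetilde\rho$ further, using the Muhly--Solel dilation of $(\sigma,s)$, to a representation $\widehat\rho$ whose associated representation of $E$ \emph{is} rigged; check that $\widehat\rho$ is still a c-dilation of $\rho$, run your $2\times 2$ matrix argument there (now both the big representation and the compression $(\rho\circ\iota,\rho\circ T)$ are rigged, so the off-diagonal corner vanishes), and deduce that $\H$ reduces $\widehat\rho$, hence also $\widetilde\rho$. The other is to keep your $\widetilde\rho$ but replace the equality by the inequality $s(\xi)^*s(\xi)\le\sigma(\langle\xi,\xi\rangle)$, which holds for every completely contractive representation because $\widetilde s:E\otimes_\sigma\K\to\K$ is a contraction (\cite{MS98}); compressing this inequality to $\H$ and using riggedness of $(\rho\circ\iota,\rho\circ T)$ gives $X_\xi^*X_\xi\le 0$, so $X_\xi=0$. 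Either fix works, but as written your proof of this implication is incomplete.
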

	
	\begin{proof}
		$(\Rightarrow):$ Let $\widetilde{\rho}: \T_+(E)\to B(\K)$ be the completely contractive representation described in the above paragraph. As $\widetilde{\rho}$ is a c-dilation of $\rho$ and $\rho$ is c-dilation extreme, $\widetilde{\rho}=\rho\oplus \psi$ for some representation $\psi$. Moreover, we know that $\widetilde{\rho}$ is induced by a rigged representation, $(\sigma, s)=(\widetilde{\rho}\circ\iota, \widetilde{\rho}\circ T)$ of $E$. Thus, $(\rho\circ \iota,\rho\circ T)$ is also a rigged representation of $E$.
		
		$(\Leftarrow):$ 
		Assume $(\rho\circ \iota,\rho\circ T)$ is a rigged representation of $E$. We want to show that $\rho$ is c-dilation extreme, so let $\widetilde{\rho}:\T_+(E)\to B(\K)$ be a c-dilation of $\rho$.
		By the above discussion, there exists a c-dilation of $\widetilde{\rho}$, denoted $\widehat{\rho}: \T_+(E)\to B(\widehat{\K})$, s.t. $(\widehat{\rho} \circ\iota, \widehat{\rho} \circ T)$ is a rigged representation of $E$. Viewing $\H\subseteq \K \subseteq \widehat{\K}$, one can check that $\widehat{\rho}$ is also a c-dilation of $\rho$. So without loss of generality we assume that $(\sigma,s) = (\widetilde{\rho} \circ \iota, \widetilde{\rho} \circ T)$ is rigged.
		
		We claim that $\widetilde{\rho}=\rho\oplus \psi$, for some representation $\psi$. Indeed, view $\widetilde{\rho}$ as a $2\times 2$ block matrix acting on $\H\oplus \H^{\perp}$. For $\xi\in E$, as $\H$ is co-invariant for $\widetilde{\rho}(\T_+(E))$ and $\rho$ is a rigged representation, we may write
		$$
		\widetilde{\rho}(T_{\xi}) = \begin{bmatrix} 
		\rho(T_\xi)&& 0\\
		X_{\xi} && Y_{\xi}\\
		\end{bmatrix}
		$$
		So that,
		$$
		\widetilde{\rho}(T_{\xi})^*\widetilde{\rho}(T_{\xi}) = 
		\begin{bmatrix} 
		\rho(\langle \xi,\xi\rangle)+X_{\xi}^*X_{\xi}&& * \\
		* && * \\
		\end{bmatrix}.
		$$
On the other hand, using that $(\widetilde{\rho} \circ \iota,\widetilde{\rho}\circ T)$ is a rigged representation of $E$ we get that,
		$$
		\widetilde{\rho}(T_{\xi})^*\widetilde{\rho}(T_{\xi})=\widetilde{\rho}(\langle \xi, \xi\rangle)=  \begin{bmatrix} 
		\rho(\langle \xi, \xi\rangle)&& 0\\
		* && *\\
		\end{bmatrix}.
		$$
		Combining both equations, we get $X_{\xi}^*X_{\xi}=0$, and so $X_{\xi}=0$. Therefore, for all $\xi\in E$, one has 
		$$
		\widetilde{\rho}(T_{\xi})=\begin{bmatrix} 
		\rho(T_\xi)&& 0\\
		0 && Y_{\xi}\\
		\end{bmatrix}.
		$$
		Moreover, since $\widetilde{\rho} \circ\iota$ is a *-homomorphism that dilates the *-representation $\rho\circ\iota$, we know that $\rho\circ\iota$ is a direct summand of $\widetilde{\rho} \circ \iota$. Hence, as $\H$ is reducing for $\widetilde{\rho}(a)$ and $\widetilde{\rho}(T_{\xi})$ for every $a \in \A$ and $\xi \in E$, and as the image of $\T_+(E)$ under $\widetilde{\rho}$ is generated by such elements, we see that $\widetilde{\rho}$ has $\H$ as a reducing subspace. Therefore, $\widetilde{\rho}$ must be of the form $\widetilde{\rho}=\rho\oplus\psi$, for some representation $\psi$. This shows that $\rho$ is c-dilation extreme.
	\end{proof}
	
	\begin{Thm} \label{T:non-sa-to-sa}
		Let $E$ and $F$ be $C^*$-correspondences over $C^*$-algebras $\A$ and $\B$, respectively. Let $\phi: \T_+(E)\to\T_+(F)$ be a completely isometric isomorphism. Then $\phi$ extends to a base-preserving s.e.s. isomorphism $\varphi$ of $\T(E)$ and $\T(F)$. Furthermore, if $\phi$ is graded, then $\varphi$ is also graded.
	\end{Thm}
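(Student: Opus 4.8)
The plan is to first manufacture a surjective $*$-homomorphism $\varphi:\T(E)\to\T(F)$ extending $\phi$, then upgrade it to a $*$-isomorphism by running the construction symmetrically, and finally to read off base-preservation, gradedness (when $\phi$ is graded), and the s.e.s.\ property. To build $\varphi$, I would take the Fock representation $(\iota^F,T^F)$ of $F$ on $\F(F)$ and the corresponding completely isometric representation $\sigma_F:\T_+(F)\to B(\F(F))$, whose range generates $\T(F)$ as a $C^*$-algebra. Since $(\iota^F,T^F)$ is rigged, Proposition \ref{p:c-dil-rigged} shows $\sigma_F$ is c-dilation extreme; by Theorem \ref{t:ec-dil}(2) this is inherited by $\sigma_F\circ\phi:\T_+(E)\to B(\F(F))$, and a second use of Proposition \ref{p:c-dil-rigged} shows $(\,\sigma_F\circ\phi\circ\iota^E,\ \sigma_F\circ\phi\circ T^E\,)$ is a rigged representation of $E$. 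As $\iota^E(\A)$ and $T^E(E)$ generate $\T_+(E)$ and $\phi$ is onto $\T_+(F)$, this rigged representation generates $\T(F)$, so the universal property of $\T(E)$ gives a surjective $*$-homomorphism $\varphi:\T(E)\to\T(F)$ with $\varphi\circ\iota^E=\sigma_F\circ\phi\circ\iota^E$ and $\varphi\circ T^E=\sigma_F\circ\phi\circ T^E$; since $\sigma_F$ is the identity on the standard copy of $\T_+(F)$ inside $\T(F)$, this reads $\varphi|_{\T_+(E)}=\phi$. Applying the same recipe to $\phi^{-1}$ and the Fock representation of $\T_+(E)$ produces a surjective $*$-homomorphism $\psi:\T(F)\to\T(E)$ with $\psi|_{\T_+(F)}=\phi^{-1}$; then $\psi\circ\varphi$ and $\id_{\T(E)}$ are $*$-homomorphisms agreeing on $\T_+(E)$, which generates $\T(E)$ as a $C^*$-algebra, so $\psi\circ\varphi=\id_{\T(E)}$ and likewise $\varphi\circ\psi=\id_{\T(F)}$. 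Hence $\varphi$ is a $*$-isomorphism with inverse $\psi$.

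For base-preservation I would first establish $\T_+(E)\cap\T_+(E)^*=\iota^E(\A)$: by Ces\`aro approximation an element of $\T(E)$ is zero once all its Fourier coefficients $\Phi_n$ vanish, and for $x\in\T_+(E)$ one has $\Phi_n(x)\in\T_+(E)_n$, which is $\{0\}$ for $n<0$ and equals $\iota^E(\A)$ for $n=0$; thus an element of $\T_+(E)\cap\T_+(E)^*$ is fixed by $\Phi_0$ and lies in $\iota^E(\A)$, while the reverse inclusion is clear. Likewise $\T_+(F)\cap\T_+(F)^*=\iota^F(\B)$. Since $\varphi$ is a $*$-isomorphism carrying the $C^*$-subalgebra $\iota^E(\A)\subseteq\T(E)$ into $\varphi(\T_+(E))=\T_+(F)$, its image is a $C^*$-subalgebra of $\T(F)$ sitting inside $\T_+(F)$, hence contained in $\iota^F(\B)$; applying the same to $\psi=\varphi^{-1}$ gives the reverse inclusion, so $\varphi(\A)=\B$.

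If in addition $\phi$ is graded, I would check the equivariance $\varphi\circ\gamma^E_z=\gamma^F_z\circ\varphi$ on generators: on $\iota^E(\A)$ both sides fix the element (its $\varphi$-image lies in $\iota^F(\B)=\T_+(F)_0$, which $\gamma^F$ fixes), and on $T^E_\xi$ both sides give $z\cdot\varphi(T^E_\xi)$ since $\varphi(T^E_\xi)=\phi(T^E_\xi)\in\phi(\T_+(E)_1)=\T_+(F)_1$; as both sides are $*$-homomorphisms agreeing on a generating set of the $C^*$-algebra $\T(E)$ they coincide, so $\varphi$ is equivariant, i.e.\ graded, by the remark following Definition \ref{d:GradIso}.

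Finally, the s.e.s.\ property follows from $C^*_e(\T_+(E))\cong\O(E)$ of \cite{KK04}: the completely isometric isomorphism $\phi$ extends to a $*$-isomorphism $\overline{\phi}:\O(E)\to\O(F)$, and with $q_E:\T(E)\to\O(E)$, $q_F:\T(F)\to\O(F)$ the quotient maps, the $*$-homomorphisms $q_F\circ\varphi$ and $\overline{\phi}\circ q_E$ agree on $\T_+(E)$ and therefore on all of $\T(E)$; hence $\varphi(\J_E)=\varphi(\ker q_E)\subseteq\ker q_F=\J_F$, and the same argument for $\varphi^{-1}$ upgrades this to $\varphi(\J_E)=\J_F$. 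I expect the crux to be the very first step: observing that $\phi$ transports the Fock representation of $\T_+(F)$ to a c-dilation extreme — hence, by Proposition \ref{p:c-dil-rigged}, rigged — representation of $E$, which is exactly what lets the map factor through the universal Toeplitz algebra $\T(E)$. Everything afterwards, namely the symmetric argument, the Fourier-coefficient identification of the diagonal, equivariance on generators, and the $C^*$-envelope argument for the ideals, should be essentially bookkeeping.
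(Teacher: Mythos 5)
Your proof is correct, and it leans on the same three pillars as the paper's argument: Theorem \ref{t:ec-dil} to transport c-dilation extremity through $\phi$, Proposition \ref{p:c-dil-rigged} to convert that into riggedness, and the identification $C^*_e(\T_+(E))\cong\O(E)$ to handle the ideals. The implementation, however, differs in two places. Where the paper realizes $\T(E)$ and $\T(F)$ simultaneously as the $C^*$-algebra generated by the direct sum of \emph{all} c-dilation extreme representations (and likewise $\O(E)$, $\O(F)$ via all dilation extreme ones, which makes the intertwining of the quotient maps hold ``by construction''), you use a single concrete representation --- the Fock representation of $F$ pulled back through $\phi$ --- together with the universal property of $\T(E)$, and then a symmetric argument plus density on $\T_+(E)$ to see the resulting surjections are mutually inverse; the compatibility $q_F\circ\varphi=\overline{\phi}\circ q_E$ is then recovered by a generators argument from functoriality of the $C^*$-envelope. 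This buys you a cleaner construction with no bookkeeping about ``all representations up to unitary equivalence on a sufficiently large space.'' Second, the paper proves base-preservation \emph{before} constructing $\varphi$, via Meyer's theorem and Paulsen's extension to the operator system $(\T_+(E)^1)^*+\T_+(E)^1$, merely asserting $\Delta(\T_+(E))=\A$; you instead derive it \emph{after} the extension from the identity $\T_+(E)\cap\T_+(E)^*=\iota^E(\A)$, which you actually prove by the Fourier/Ces\`aro argument --- a nice addition. Two small repairs: the Fock representation lands in $\L(\F(F))$, not in $B(\H)$, so to invoke Proposition \ref{p:c-dil-rigged} and Theorem \ref{t:ec-dil} you should compose with a faithful Hilbert-space representation of $\T(F)$ (this changes nothing, as riggedness and the generated $C^*$-algebra are preserved); and the envelope identification $C^*_e(\T_+(E))\cong\O(E)$ is \cite{KK06} in this paper's bibliography, not \cite{KK04}.
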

	
	\begin{proof}
		If $\varphi$ extends a graded isomorphism $\phi : \T_+(E) \rightarrow \T_+(F)$ to a *-isomorphism between $\T(E)$ and $\T(F)$, it is easy to see on *-monomials that $\varphi$ is also graded.
		
		Now assume $\phi: \T_+(E)\to\T_+(F)$ is a completely isometric isomorphism. We use Meyer's theorem \cite{Mey01} to extend, if necessary, to a unital complete isometry $\phi^1 : \T_+(E)^1 \rightarrow \T_+(F)^1$. Then \cite[Proposition 2.12]{Paulsen} shows that $\phi^1$ extends to a unital complete isometry $\widetilde{\phi}$ between $(\T_+(E)^1)^* + \T_+(E)^1$ and $(\T_+(F)^1)^* + \T_+(F)^1$. We then notice that $\Delta(\T_+(E)) := \{ \ T \in \T_+(E) \ | \  T^* \in \T_+(E) \ \}$ must equal the base algebra $\A$, and similarly we have that $\Delta(\T_+(F)) = \B$. Since $\widetilde{\phi}$ preserves involution and sends $\T_+(E)$ to $\T_+(F)$, it must map $\Delta(\T_+(E))$ to $\Delta(\T_+(F))$. In other words, $\phi$ must be base-preserving. It is then clear that any extension of $\phi$ will be base-preserving as well.
		
		Next, we show that $\phi$ extends to a s.e.s. isomorphism. We let $\{\rho^E_i\}_{i\in I}$ be all c-dilation extreme representations for $\T_+(E)$ up to unitary equivalence and for sufficiently large Hilbert space. Then, we set $\rho^F_i:= \rho^E_i \circ \phi^{-1}$ so that by Theorem \ref{t:ec-dil} we have that $\{\rho^F_i\}_{i\in I}$ are all c-dilation extreme representations of $\T_+(F)$ up to unitary equivalence. By Proposition \ref{p:c-dil-rigged}, we have that $C^*(\oplus_{i\in I}\rho_i^E)$ and $C^*(\oplus_{i\in I}\rho_i^F)$ are universal with respect to rigged representations of $E$ and $F$, respectively. Hence, that there are *-isomorphisms $\pi_E: C^*(\oplus_{i\in I}\rho_i^E) \rightarrow \T(E)$ and $\pi_F: C^*(\oplus_{i\in I}\rho_i^F) \rightarrow \T(F)$. Thus, since by construction $C^*(\oplus_{i\in I}\rho_i^E)= C^*(\oplus_{i\in I}\rho_i^F)$, a straightforward verification shows that $\varphi:= \pi_F \circ \pi_E^{-1}$ is a *-isomorphism between $\T(E)$ and $\T(F)$ which extends $\phi$.
		
		Next, let $\{\rho^E_i\}_{i\in J}$ be those c-dilation extreme representations of $\T_+(E)$ which are dilation extreme. By invariance of dilation extreme representations we get that $\{\rho^F_i\}_{i\in J}$ are all dilation extreme representations of $\T_+(F)$ up to unitary equivalence. By (the possibly non-unital version of) Dritchel and McCullough \cite{DM05} we have $C^*(\oplus_{i\in J}\rho_i^E) = C^*_e(\T_+(E))$ and $C^*(\oplus_{i\in J}\rho_i^F) = C^*_e(\T_+(F))$. Furthermore, by \cite{KK06} we know that $C^*_e(\T_+(E)) \cong \O(E)$ and $C^*_e(\T_+(F)) \cong \O(F)$. Hence, there *-isomorphisms $\tau_E : C^*_e(\T_+(E)) \rightarrow \O(E)$ and $\tau_F : C^*_e(\T_+(F)) \rightarrow \O(F)$. As before, $\widetilde{\varphi}:= \tau_F \circ \tau_E^{-1}$ is a *-isomorphism between $\O(E)$ and $\O(F)$ which extends $\phi$.
		
	Furthermore, by construction we have that the natural quotient maps $q_E : C^*(\oplus_{i\in I}\rho_i^E) \rightarrow C^*_e(\T_+(E))$ and $q_F : C^*(\oplus_{i\in I}\rho_i^F) \rightarrow C^*_e(\T_+(F))$ are equal, so we get that the identified quotient maps $q_E : \T(E) \rightarrow \O(E)$ and $q_F : \T(F) \rightarrow \O(F)$ satisfy $q_F \circ \varphi = \widetilde{\varphi} \circ q_E$. Hence, we see that $\varphi$ is a base-preserving s.e.s. isomorphism.
	\end{proof}
	
	To conclude, we have obtained the following hierarchy of isomorphism problems for $C^*$-correspondences, tensor algebras and Toeplitz algebras.
	
	\begin{Cor} \label{C:main-square}
		Let $E$ and $F$ be $C^*$-correspondences over $C^*$-algebras $\A$ and $\B$, respectively. Consider the following:
		\begin{enumerate}
			\item
			$E$ and $F$ are unitarily isomorphic $C^*$-correspondences.
			
			\item
			$\T_+(E)$ and $\T_+(F)$ are graded completely isometrically isomorphic.
			\item
			$\T_+(E)$ and $\T_+(F)$ are completely isometrically isomorphic.
			\item
			$\T(E)$ and $\T(F)$ are base-preserving graded isomorphic.
			\item
			$\T(E)$ and $\T(F)$ are base-preserving s.e.s. isomorphic.
		\end{enumerate}
		Then $(1)$ implies $(2)$, $(2)$ implies $(3)$ and $(4)$, and each of $(3)$ and $(4)$ separately imply $(5)$.
	\end{Cor}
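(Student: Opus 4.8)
The plan is to observe that, once Theorems \ref{T:non-sa-to-sa} and \ref{T:graded-to-ses} are in hand, the corollary is essentially a bookkeeping exercise: the only implication requiring independent work is $(1)\Rightarrow(2)$, which is the standard functoriality of the Fock construction, and the remaining four arrows follow by direct appeal to the theorems already proved in Section \ref{S: Hierarchy of isomorphism problems}. I would organize the proof around the arrows $(1)\Rightarrow(2)$, $(2)\Rightarrow(3)$, $(2)\Rightarrow(4)$, $(3)\Rightarrow(5)$, and $(4)\Rightarrow(5)$.

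For $(1)\Rightarrow(2)$: given a unitary isomorphism $(U,\rho)$ of $E$ onto $F$, I would form the induced unitary $\F(U):\F(E)\to\F(F)$ of Fock spaces, acting by $\xi_1\otimes\cdots\otimes\xi_n\mapsto U\xi_1\otimes\cdots\otimes U\xi_n$ on the $n$-th summand and by $\rho$ on $E^{\otimes 0}=\A$. Since $\F(U)$ is a $\rho$-compatible unitary of Hilbert modules, the conjugation $T\mapsto \F(U)T\F(U)^*$ is a $*$-isomorphism $\L(\F(E))\to\L(\F(F))$, and a check on generators shows it carries $\iota^E(a)$ to $\iota^F(\rho(a))$ and $T^E_\xi$ to $T^F_{U\xi}$. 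Restricting to $\T(E)=C^*(\iota^E,T^E)$ gives a base-preserving $*$-isomorphism onto $\T(F)=C^*(\iota^F,T^F)$, which restricts further to a completely isometric isomorphism of $\T_+(E)$ onto $\T_+(F)$; because it sends the degree-$n$ generating set $\{T^{(n)}_\xi:\xi\in E^{\otimes n}\}$ onto $\{T^{(n)}_\eta:\eta\in F^{\otimes n}\}$, it respects the gradings as described after Definition \ref{d:GradIso}. I would cite \cite[Subsection 2.1]{DO18} for this rather than reproduce it in full.

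The remaining implications are immediate. For $(2)\Rightarrow(3)$, a graded completely isometric isomorphism is in particular a completely isometric isomorphism. For $(2)\Rightarrow(4)$ and $(3)\Rightarrow(5)$, Theorem \ref{T:non-sa-to-sa} says that any completely isometric isomorphism $\phi:\T_+(E)\to\T_+(F)$ extends to a base-preserving s.e.s. isomorphism $\varphi$ of $\T(E)$ and $\T(F)$, and that if $\phi$ is graded then $\varphi$ is graded as well; this yields $(3)\Rightarrow(5)$ at once, and $(2)\Rightarrow(4)$ since a graded, base-preserving s.e.s. isomorphism is in particular a base-preserving graded isomorphism. For $(4)\Rightarrow(5)$, this is precisely the content of Theorem \ref{T:graded-to-ses}.

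There is no substantive obstacle. The one place calling for verification is $(1)\Rightarrow(2)$, and even there the argument is the routine check that the Fock-space unitary intertwines the left actions and the creation operators and therefore descends to a graded completely isometric isomorphism of tensor algebras; everything else is a citation of results already established.
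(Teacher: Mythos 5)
Your proposal is correct and follows essentially the same route as the paper: the paper's proof is just the terse version ``(1)$\Rightarrow$(2) and (2)$\Rightarrow$(3) are clear, (2)$\Rightarrow$(4) and (3)$\Rightarrow$(5) follow from Theorem \ref{T:non-sa-to-sa}, and (4)$\Rightarrow$(5) from Theorem \ref{T:graded-to-ses}.'' Your only addition is to spell out the Fock-space unitary argument for (1)$\Rightarrow$(2), which the paper leaves as a remark with a citation to \cite{DO18}, and that verification is accurate.
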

	
	\begin{proof}
		Clearly $(1)$ implies $(2)$ and $(2) \implies (3)$. By Theorem \ref{T:non-sa-to-sa} we see that $(2) \implies (4)$ and $(3) \implies (5)$. Finally, an application of Theorem \ref{T:graded-to-ses} shows that $(4) \implies (5)$.
	\end{proof}
	
	\begin{Rmk} \label{R:no-completely}
		In case $\phi : \T_+(E) \rightarrow \T_+(F)$ is a graded (not necessarily completely) isometric isomorphism, since $\phi|_{\A} : \A \rightarrow \B$ is an isometric isomorphism it is automatically a *-isomorphism by invoking \cite[Corollary 4.2]{Gar65}. Hence, the proof of \cite[Theorem 4.3 item (2)]{DO18} can be carried out to show that the $C^*$-correspondences $E$ and $F$ are unitarily isomorphic. Thus, items (1) and (2) in the above theorem are both equivalent to the existence of a graded isometric isomorphism between $\T_+(E)$ and $\T_+(F)$. 
	\end{Rmk}

	
	\section{Graded isomorphisms}\label{S: Graded isomorphisms}
	In this section, we investigate graded base-preserving isomorphisms of Toeplitz algebras under additional assumptions on the $C^*$-correspondence. 
	
	Following the notations of \cite{Kat04b}, we recall the construction of core subalgebras of Fock representation and their properties.
	
	\begin{Def} \label{d:cores}
		We define the core C*-subalgebras $B_n\subseteq \T(E)$ by
		$$
		B_0\coloneqq \phi_{\F(E)}(\A), \text{  and  } B_n\coloneqq \overline{\Span}\{ T^{(n)}_{\xi}T^{(n)*}_{\eta}: \xi,\eta \in E^{\otimes n} \}.
		$$
	\end{Def}
	We define $B_{[n,m]} = B_n + \cdots + B_m$ for $m\geq n$. And then set $B_{[n,\infty)}= \overline{\bigcup_{m\geq n} B_{[n,m]}}$. We note that $B_{[n,\infty)}$ is a decreasing sequence of ideals. By \cite[Section 5]{Kat04b} we see that that $B_n$ is identified with $\K(E^{\otimes n})$ by the *-isomorphism $$\K(E^{\otimes n})\to B_n, \ \ \ \ \theta_{\xi, \eta}\mapsto T^{(n)}_{\xi}T^{(n)*}_{\eta}.$$
	
	\begin{Prop} \label{p:ses} We have a short-exact sequence:
		$$0\to B_{[n+1,\infty)}\to B_{[n,\infty)}\to B_{n}\to 0$$ 
		which splits naturally.
	\end{Prop}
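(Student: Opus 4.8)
The plan is to exhibit the maps explicitly. The quotient map $B_{[n,\infty)} \to B_n$ should be the restriction of a conditional-expectation-type projection: recall that $\T(E)$ carries the Fourier coefficients $\Phi_k$, and the degree-zero part $B_{[0,\infty)} = \overline{\bigcup_m B_{[0,m]}}$ decomposes using the filtration by the cores. The key observation is that $B_{[n,\infty)} = B_n + B_{[n+1,\infty)}$ with $B_{[n+1,\infty)}$ a (closed, two-sided) ideal in $B_{[n,\infty)}$: this is essentially the content of \cite[Section 5]{Kat04b}, and follows from the identities $T^{(n)}_\xi T^{(n)*}_\eta \cdot T^{(n+1)}_{\xi'} T^{(n+1)*}_{\eta'} \in B_{[n+1,\infty)}$ obtained by writing $\xi' = \xi'' \otimes \zeta$ and computing $T^{(n)*}_\eta T^{(n+1)}_{\xi''\otimes\zeta} = \phi(\langle \eta, \xi''\rangle) T_\zeta \in B_1$-type terms, so that the product lands in $B_{[n+1,\infty)}$ after reassociating. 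Hence $B_{[n,\infty)}/B_{[n+1,\infty)} \cong B_n$ as the image of $B_n$, and the first step is to verify that $B_n \cap B_{[n+1,\infty)} = \{0\}$, which I would get from the topological grading: apply the Fourier coefficient $\Phi_0$ together with the finer filtration, or more directly observe that an element of $B_n$ that also lies in $B_{[n+1,\infty)}$ must, under the identification $B_n \cong \K(E^{\otimes n})$, be compressed to zero on the $E^{\otimes n}$ summand of Fock space while $B_{[n+1,\infty)}$ acts as zero there, forcing it to vanish.

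For the splitting, the natural section $s : B_n \to B_{[n,\infty)}$ is simply the inclusion $B_n \hookrightarrow B_{[n,\infty)}$, which is a $*$-homomorphism (each $B_n$ is itself a $C^*$-subalgebra by Definition \ref{d:cores}) and is clearly a right inverse to the quotient map once we know $B_n \cap B_{[n+1,\infty)} = \{0\}$. So the sequence
\[
0 \to B_{[n+1,\infty)} \to B_{[n,\infty)} \to B_n \to 0
\]
splits with a completely natural splitting given by the inclusion. The word ``naturally'' here should be read as: the splitting is functorial with respect to base-preserving graded isomorphisms, since such isomorphisms carry cores to cores (this will be used later in the K-theory section), and in particular it is canonical, not dependent on choices.

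The main obstacle I anticipate is the exactness at $B_{[n,\infty)}$, i.e.\ showing that the ideal generated by the relations is exactly $B_{[n+1,\infty)}$ and that the two subalgebras meet trivially; the subtlety is that $B_{[n,m]}$ is only a \emph{sum} of $C^*$-subalgebras and need not itself be closed or an algebra a priori, so one must argue as in \cite[Section 5]{Kat04b} that $B_{[n,m]}$ is in fact a $C^*$-subalgebra (using $B_k B_l \subseteq B_{[\min(k,l), \max(k,l)]}$ or rather $B_k B_l \subseteq B_{[k, \infty)}$ appropriately) and that passing to the inductive limit $B_{[n,\infty)}$ behaves well. Once these structural facts about the cores are in hand — all of which are in Katsura's paper — the short exact sequence and its splitting are immediate.
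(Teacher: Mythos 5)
Your proposal is correct and follows essentially the same route as the paper: show $B_n \cap B_{[n+1,\infty)} = \{0\}$ by noting that elements of $B_{[n+1,\infty)}$ annihilate the $E^{\otimes n}$ summand on which $B_n \cong \K(E^{\otimes n})$ acts faithfully, then use that $B_{[n+1,\infty)}$ is an ideal with $B_{[n,\infty)} = B_n + B_{[n+1,\infty)}$ to identify the quotient with $B_n$, the splitting being the inclusion $B_n \hookrightarrow B_{[n,\infty)}$. The paper's proof is just a terser version of this (via the third isomorphism theorem), with the same structural facts about the cores taken from Katsura.
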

	
	\begin{proof}
		We first show that $B_{[n+1,\infty)}\cap B_n=\{0\}$. Let $T\in B_{[n+1,\infty)}\cap B_n$. Observe that $B_n\subseteq \L(E^{\otimes n})$. However, $T(E^{\otimes n})=0$, since $T\in B_{[n+1,\infty)}$. Hence by third isomorphism theorem we get, 
		$$
		\frac{B_{[n,\infty)}}{B_{[n+1,\infty)}}=\frac{B_n+B_{[n+1,\infty)}}{B_{[n+1,\infty)}}\cong \frac{B_n}{B_n\cap B_{[n+1,\infty)}}\cong B_n.
		$$
		Finally, the natural inclusion $B_n\hookrightarrow B_{[n,\infty)}$ gives the splitting map for the short exact sequence.
	\end{proof}
	
	\begin{Prop} \label{closed formula for cores}
		Let $E$ be a $C^*$-correspondence over $\A$. Then for each $n\in \bN$ we have $B_{[n,\infty)}=\T(E)_n \T(E)_n^*$. That is, 
		$$
		B_{[n,\infty)}= \overline{\Span}\{TS^*: T,S\in \T(E)_n \}.
		$$
	\end{Prop}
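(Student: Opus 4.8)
The plan is to establish the two inclusions of the asserted equality separately, using throughout the concrete descriptions recorded earlier for the universal (Fock) rigged representation $(\iota,T)$: namely $\T(E)_n=\overline{\Span}\{T^{(k)}_{\xi}T^{(l)*}_{\eta}:\xi\in E^{\otimes k},\eta\in E^{\otimes l},\ k-l=n\}$, the identities $T^{(k)*}_{\xi}T^{(k)}_{\eta}=\iota(\langle\xi,\eta\rangle)$ and $T^{(p)}_{\alpha}T^{(q)}_{\beta}=T^{(p+q)}_{\alpha\otimes\beta}$ and $T^{(p)}_{\alpha}\iota(a)=T^{(p)}_{\alpha\cdot a}$, and the definition $B_{[n,\infty)}=\overline{\bigcup_{m\ge n}(B_n+\cdots+B_m)}$.

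For the inclusion $\T(E)_n\T(E)_n^{*}\subseteq B_{[n,\infty)}$: since $B_{[n,\infty)}$ is a closed subspace and multiplication is jointly continuous on bounded sets, it suffices to treat $TS^{*}$ for generators $T=T^{(k)}_{\xi}T^{(l)*}_{\eta}$ and $S=T^{(k')}_{\xi'}T^{(l')*}_{\eta'}$ with $k-l=k'-l'=n$. Then $TS^{*}=T^{(k)}_{\xi}\big(T^{(l)*}_{\eta}T^{(l')}_{\eta'}\big)T^{(k')*}_{\xi'}$, and the rigged relations collapse the middle word: writing whichever of $\eta,\eta'$ lies in the larger tensor power as an elementary tensor and applying $T^{(l)*}T^{(l)}=\iota(\langle\cdot,\cdot\rangle)$ together with the module action shows that $T^{(l)*}_{\eta}T^{(l')}_{\eta'}$ is a creation operator $T^{(l'-l)}_{\eta''}$ when $l\le l'$ and a co-creation operator $T^{(l-l')*}_{\eta''}$ when $l>l'$, for a suitable $\eta''$ (these sets being closed since $\zeta\mapsto T^{(j)}_{\zeta}$ is isometric). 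Substituting and re-bracketing via $T^{(p)}_{\alpha}T^{(q)}_{\beta}=T^{(p+q)}_{\alpha\otimes\beta}$ turns $TS^{*}$ into an element of $B_{\max(k,k')}$, and $\max(k,k')\ge k\ge n$, so $TS^{*}\in B_{[n,\infty)}$.

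For the reverse inclusion, since $\T(E)_n\T(E)_n^{*}=\overline{\Span}\{UV^{*}:U,V\in\T(E)_n\}$ is closed it is enough to show $B_m\subseteq\T(E)_n\T(E)_n^{*}$ for every $m\ge n$; put $r=m-n$. The key observation is that $E^{\otimes m}$ is non-degenerate as a right module over the ideal $I_r:=\overline{\Span}\{\langle\zeta,\omega\rangle:\zeta,\omega\in E^{\otimes r}\}\subseteq\A$, i.e.\ $E^{\otimes m}\cdot I_r$ is dense in $E^{\otimes m}$. This follows from the natural identification $E^{\otimes m}\cong E^{\otimes n}\otimes_{\A}E^{\otimes r}$, under which the right $\A$-action is carried by the $E^{\otimes r}$-leg, together with the standard fact that the $\A$-valued inner product acts non-degenerately on the Hilbert module $E^{\otimes r}$. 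Consequently the vectors $\xi\cdot\langle\zeta,\omega\rangle$ ($\xi\in E^{\otimes m}$, $\zeta,\omega\in E^{\otimes r}$) span a dense subspace of $E^{\otimes m}$, so $B_m=\overline{\Span}\{T^{(m)}_{\alpha}T^{(m)*}_{\beta}\}$ equals the closed linear span of the elements $T^{(m)}_{\xi\langle\zeta,\omega\rangle}T^{(m)*}_{\eta}$. Finally the computation
$$
T^{(m)}_{\xi\langle\zeta,\omega\rangle}T^{(m)*}_{\eta}=T^{(m)}_{\xi}\,\iota(\langle\zeta,\omega\rangle)\,T^{(m)*}_{\eta}=\big(T^{(m)}_{\xi}T^{(r)*}_{\zeta}\big)\big(T^{(m)}_{\eta}T^{(r)*}_{\omega}\big)^{*}
$$
exhibits such a generator as a product $UV^{*}$ with $U=T^{(m)}_{\xi}T^{(r)*}_{\zeta}\in\T(E)_n$ and $V=T^{(m)}_{\eta}T^{(r)*}_{\omega}\in\T(E)_n$ (both of degree $m-r=n$). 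Hence $B_m\subseteq\T(E)_n\T(E)_n^{*}$; summing to get $B_{[n,m]}=B_n+\cdots+B_m\subseteq\T(E)_n\T(E)_n^{*}$ and taking the closure of the union over $m$ yields $B_{[n,\infty)}\subseteq\T(E)_n\T(E)_n^{*}$.

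I expect the only genuine subtlety to be the density statement $\overline{E^{\otimes m}\cdot I_r}=E^{\otimes m}$: one must be careful that $I_r$ need not equal $\A$, so this is not literally the standing non-degeneracy hypothesis on $E$ but rather the non-degeneracy of the inner-product action on $E^{\otimes r}$ transported to $E^{\otimes m}$ through the tensor factorization; concretely, for an elementary tensor $\alpha\otimes\beta\in E^{\otimes n}\otimes_{\A}E^{\otimes r}$ one has $\alpha\otimes\beta=\lim_{\lambda}(\alpha\otimes\beta)u_{\lambda}$ for any approximate unit $(u_{\lambda})$ of $I_r$, and elements of $I_r$ can be approximated arbitrarily well inside $\Span\{\langle\zeta,\omega\rangle:\zeta,\omega\in E^{\otimes r}\}$. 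Everything else is routine bookkeeping with Fock creation operators and the rigged relations; the case $n=0$ is covered and specializes to the identity $B_{[0,\infty)}=\T(E)_0$.
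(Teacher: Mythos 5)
Your proof is correct, and it splits naturally into a half that matches the paper and a half that does not. The inclusion $\T(E)_n\T(E)_n^{*}\subseteq B_{[n,\infty)}$ is essentially the paper's own argument: collapse the middle word $T^{(l)*}_{\eta}T^{(l')}_{\eta'}$ in a product of spanning elements into a creation (or co-creation) operator and observe that the result lies in $B_{k'}$ (resp.\ $B_{k}$) with $k,k'\geq n$; your remark that the collapse is exact because $\{T^{(j)}_{\zeta}:\zeta\in E^{\otimes j}\}$ is a closed subspace (the map $\zeta\mapsto T^{(j)}_{\zeta}$ being isometric) is a mild sharpening of the paper's ``up to $\epsilon$'' phrasing. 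For the reverse inclusion you take a genuinely different route. The paper disposes of it in one line of grading bookkeeping: for $\xi,\eta\in E^{\otimes(n+m)}$ one has $T^{(n+m)}_{\xi}T^{(n+m)*}_{\eta}\in\T(E)_n\T(E)_m\T(E)_m^{*}\T(E)_n^{*}\subseteq\T(E)_n\T(E)_0\T(E)_n^{*}\subseteq\T(E)_n\T(E)_n^{*}$, using only that the spectral subspaces multiply according to degree. You instead exhibit explicit factorizations $T^{(m)}_{\xi\langle\zeta,\omega\rangle}T^{(m)*}_{\eta}=\bigl(T^{(m)}_{\xi}T^{(r)*}_{\zeta}\bigr)\bigl(T^{(m)}_{\eta}T^{(r)*}_{\omega}\bigr)^{*}$ with both factors of degree $n=m-r$, and pay for this with the density of $E^{\otimes m}\cdot\overline{\Span}\{\langle\zeta,\omega\rangle:\zeta,\omega\in E^{\otimes r}\}$ in $E^{\otimes m}$. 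That density claim is the genuine extra ingredient, and you justify it correctly: it is the non-degeneracy of the inner-product ideal of $E^{\otimes r}$ acting on the right leg of $E^{\otimes m}\cong E^{\otimes n}\otimes_{\A}E^{\otimes r}$, i.e.\ the same fact from \cite{Lan95} that the paper itself invokes in the proof of Proposition \ref{p:unitary-iso-cpt} (and the degenerate cases $r=0$ and $n=0$ cause no trouble). So your argument is sound; it is somewhat longer than the paper's, trading the soft grading containment for a concrete factorization of the generators of each core $B_m$, which has the minor benefit of displaying explicit elements of $\T(E)_n$ realizing them.
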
	
	
	\begin{proof}
		$(\subseteq):$ Let $\xi_1\in E^{\otimes k_1},\xi_2\in E^{\otimes k_2},\eta_1\in E^{\otimes l_1},\eta_2\in E^{\otimes l_2}$ be such that $k_1-l_1=n=k_2-l_2$. Then up to approximation by tolerance $\epsilon$, we have 
		$$
		T^{(k_1)}_{\xi_1}T^{(l_1)*}_{\eta_1}T^{(l_2)}_{\eta_2}T^{(k_2)*}_{\xi_2} \sim_{\epsilon} \begin{cases}
		
		T^{(k_1)}_{\xi_1}T^{(l_2 - l_1)}_{\zeta} T^{(k_2)*}_{\xi_2} & \text{if  } l_2\geq l_1 \\
		
		T^{(k_1)}_{\xi_1}T^{(l_1 - l_2)*}_{\zeta'} T^{(k_2)*}_{\xi_2} & \text{if  } l_2<l_1
		\end{cases}
		$$ 
		for some $\zeta \in E^{\otimes (l_2-l_1)}$ and $\zeta' \in E^{\otimes (l_1-l_2)}$. Thus, $T^{(k_1)}_{\xi_1}T^{(l_1)*}_{\eta_1}T^{(l_2)}_{\eta_2}T^{(k_2)*}_{\xi_2} \in B_{k_1}+B_{k_2}\subseteq B_{[n,\infty)}$, as required.

		$(\supseteq):$ It is enough to show that $T^{(n+m)}_{\xi} {T^{(n+m)*}_{\eta}}\in \T(E)_n \T(E)_n^*$, for $\xi, \eta\in E^{\otimes (n+m)}$ and $m\geq 0$. From the grading we get 
		$$
		T^{(n+m)}_{\xi} {T^{(n+m)*}_{\eta}} \in \T(E)_n \T(E)_m \T(E)_m^* \T(E)_n^* \subseteq
		$$
		$$
		\subseteq \ \T(E)_n\T(E)_0\T(E)_n^* \  \subseteq \ \T(E)_n\T(E)_n^* \ ,
		$$
		so we are done.
\end{proof}

	\begin{Cor} \label{c:core-pres}
		Let $E$, $F$ be $C^*$-correspondences over $\A$ and $\B$, respectively. Let $\varphi: \T(E)\to \T(F)$ be a graded *-isomorphism.
		Then $\varphi(B^E_{[n,\infty)})=B^F_{[n,\infty)}$.
	\end{Cor}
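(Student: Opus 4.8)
The plan is to reduce everything to the closed formula for the cores established in Proposition \ref{closed formula for cores}, which identifies $B_{[n,\infty)}$ with the closed linear span of products $TS^*$ for $T,S \in \T(E)_n$. Once we have this description, the statement becomes almost formal: a graded $*$-isomorphism by definition carries $\T(E)_n$ onto $\T(F)_n$, it is automatically continuous (being a $*$-homomorphism of $C^*$-algebras, hence contractive), and it respects adjoints. So it must carry the closed span of $\{TS^* : T,S \in \T(E)_n\}$ onto the closed span of $\{T'S'^* : T',S' \in \T(F)_n\}$.

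Concretely, I would argue as follows. Fix $n \in \bN$. Since $\varphi$ is graded, $\varphi(\T(E)_n) = \T(F)_n$. Given $T,S \in \T(E)_n$, we have $\varphi(TS^*) = \varphi(T)\varphi(S)^*$ with $\varphi(T), \varphi(S) \in \T(F)_n$, so $\varphi$ maps the (un-closed) linear span of $\{TS^* : T,S \in \T(E)_n\}$ into $\T(F)_n\T(F)_n^*$, and by applying the same reasoning to $\varphi^{-1}$ (which is again a graded $*$-isomorphism) one gets the span of $\{T'S'^* : T',S'\in\T(F)_n\}$ back inside the image. Taking closures and using continuity of $\varphi$ and $\varphi^{-1}$ then gives $\varphi\big(\overline{\Span}\{TS^* : T,S \in \T(E)_n\}\big) = \overline{\Span}\{T'S'^* : T',S'\in\T(F)_n\}$. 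By Proposition \ref{closed formula for cores}, the left-hand side is $\varphi(B^E_{[n,\infty)})$ and the right-hand side is $B^F_{[n,\infty)}$, which is exactly the claim.

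There is essentially no serious obstacle here; the only point requiring care is that one should invoke Proposition \ref{closed formula for cores} rather than trying to match the generators $T^{(n)}_\xi T^{(n)*}_\eta$ of $B_n$ directly, because a graded isomorphism need not send degree-$n$ creation operators to degree-$n$ creation operators — it only preserves the spectral subspaces $\T(E)_n$, and elements of $\T(E)_n$ are not in general of the pure form $T^{(n)}_\xi$. The closed-formula description of $B_{[n,\infty)}$ purely in terms of the spectral subspaces is precisely what makes the argument go through, and it is also the reason the conclusion is stated for $B_{[n,\infty)}$ rather than for the individual cores $B_n$. (One could additionally remark that, since the short exact sequences of Proposition \ref{p:ses} are natural, $\varphi$ then also induces $*$-isomorphisms $B^E_n \cong B^F_n$ compatibly with the splittings, but this is not needed for the statement as written.)
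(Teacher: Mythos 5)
Your proof is correct and is exactly the argument the paper intends: the corollary is stated as an immediate consequence of Proposition \ref{closed formula for cores}, and your reduction via $B_{[n,\infty)}=\overline{\Span}\{TS^*: T,S\in\T(E)_n\}$ together with gradedness, $*$-preservation, and applying the same reasoning to $\varphi^{-1}$ is precisely that implicit proof. Your closing remark about why one cannot simply match generators $T^{(n)}_\xi T^{(n)*}_\eta$ is also the right reason the statement is phrased for $B_{[n,\infty)}$ rather than $B_n$.
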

	
	For what follows, we denote by $\Psi_n: B_{[n,\infty)}\to {B}_n$ the quotient map by $\B_{[n+1,\infty)}$ as in Proposition \ref{p:ses}.
	
	\begin{Cor}\label{c:core-iso}
		Let $E$, $F$ be $C^*$-correspondences over $\A$ and $\B$, respectively. Let $\varphi: \mathcal{T}(E)\to \mathcal{T}(F)$ be a graded *-isomorphism. Then $\varphi_n\coloneqq \Psi_n \circ \varphi|_{B_n}$ is a *-isomorphism.
	\end{Cor}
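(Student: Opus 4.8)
The plan is to recognize $\varphi_n$ as the $*$-isomorphism that $\varphi$ induces on the quotient $B_{[n,\infty)}/B_{[n+1,\infty)}$ after identifying this quotient with $B_n$ via Proposition~\ref{p:ses}. First I would apply Corollary~\ref{c:core-pres} twice, at level $n$ and at level $n+1$, to conclude that the graded $*$-isomorphism $\varphi$ satisfies $\varphi(B^E_{[n,\infty)}) = B^F_{[n,\infty)}$ and $\varphi(B^E_{[n+1,\infty)}) = B^F_{[n+1,\infty)}$. Hence $\varphi$ restricts to a $*$-isomorphism $B^E_{[n,\infty)} \to B^F_{[n,\infty)}$ carrying the ideal $B^E_{[n+1,\infty)}$ onto the ideal $B^F_{[n+1,\infty)}$, and therefore descends to a $*$-isomorphism $\overline{\varphi}_n$ between the quotient $C^*$-algebras $B^E_{[n,\infty)}/B^E_{[n+1,\infty)}$ and $B^F_{[n,\infty)}/B^F_{[n+1,\infty)}$.

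Next I would unwind the identification in Proposition~\ref{p:ses}. The proof of that proposition shows $B_n \cap B_{[n+1,\infty)} = \{0\}$ and $B_{[n,\infty)} = B_n + B_{[n+1,\infty)}$, so the quotient map restricts to a $*$-isomorphism $B_n \to B_{[n,\infty)}/B_{[n+1,\infty)}$, and $\Psi_n$ is precisely its inverse preceded by the quotient map; in particular $\Psi_n|_{B_n} = \mathrm{id}_{B_n}$. Combining this with the previous paragraph, a short diagram chase identifies $\varphi_n = \Psi^F_n \circ \varphi|_{B^E_n}$ with the composition of the $*$-isomorphism $B^E_n \xrightarrow{\ \sim\ } B^E_{[n,\infty)}/B^E_{[n+1,\infty)}$, the $*$-isomorphism $\overline{\varphi}_n$, and the $*$-isomorphism $B^F_{[n,\infty)}/B^F_{[n+1,\infty)} \xrightarrow{\ \sim\ } B^F_n$; being a composition of $*$-isomorphisms, $\varphi_n$ is a $*$-isomorphism. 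Equivalently, one can argue directly: $\varphi$ injective together with $\varphi(B^E_n)\cap B^F_{[n+1,\infty)} = \varphi(B^E_n \cap B^E_{[n+1,\infty)}) = \{0\}$ gives injectivity of $\Psi^F_n$ on $\varphi(B^E_n)$, while surjectivity onto $B^F_n$ follows by splitting $\varphi^{-1}(c) = b + d$ with $b\in B^E_n$, $d \in B^E_{[n+1,\infty)}$ for any $c \in B^F_n$ and observing $\Psi^F_n(\varphi(b)) = \Psi^F_n(c) = c$.

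There is no genuine obstacle in this argument: all the substance is already packaged in Corollary~\ref{c:core-pres} (which in turn rests on the closed formula $B_{[n,\infty)} = \T(E)_n\T(E)_n^*$ of Proposition~\ref{closed formula for cores}). The only place demanding a little care is the bookkeeping step of the second paragraph --- verifying that the compression-type description of $\Psi_n$ coming from the splitting in Proposition~\ref{p:ses} genuinely matches the quotient-map description, so that $\Psi^F_n \circ \varphi|_{B^E_n}$ is literally the induced map $\overline{\varphi}_n$ transported to $B_n$, rather than just some $*$-homomorphism whose bijectivity would still need to be checked by hand.
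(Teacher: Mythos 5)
Your proposal is correct and takes essentially the same route as the paper: apply Corollary \ref{c:core-pres} (at levels $n$ and $n+1$) so that $\varphi$ carries $B^E_{[n+1,\infty)}$ onto $B^F_{[n+1,\infty)}$, and then recognize $\varphi_n$ as the induced isomorphism of the quotients $B_{[n,\infty)}/B_{[n+1,\infty)}\cong B_n$ furnished by Proposition \ref{p:ses}. The paper's proof is just a terser version of this, leaving implicit the bookkeeping you verify explicitly.
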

	
	\begin{proof}
		By Corollary \ref{c:core-pres} we have that $\varphi$ restricts to an isomorphism between $B^E_{[n,\infty)}$ and $B^F_{[n,\infty)}$ for all $n \in \bN$. By Proposition \ref{p:ses} we get that $\varphi_n$ is the induced isomorphism between the quotient algebras $B^E_n$ and $B^F_n$.
	\end{proof}
	
	When $E$ is a general Hilbert C*-module, by \cite[Page 10]{Lan95} we know that operators in $\K(E)$ may fail to be compact operators as bounded operators the Banach space $E$. Thus, we make a distinction and say that a C*-algebra $\A$ is a \emph{compact operator subalgebra} if $\A$ is a subalgebra of $\K(\H)$ on some Hilbert space $\H$.
	
	\begin{Prop}\label{p:unitary-iso-cpt}
		Let $E$, $F$ be $C^*$-correspondences over $\A$ and $\B$, respectively, such that $\A$ (or $\B$) is a subalgebra of compact operators. Let $\varphi: \T(E)\to \T(F)$ be a base-preserving graded *-isomorphism. Then, there exists a unitary isomorphism $U:E\to F$ implemented by the *-isomorphism $\rho:= \varphi|_{\A}: \A \rightarrow \B$.
	\end{Prop}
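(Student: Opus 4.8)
The plan is the following. By the reduction discussed after the proof of Theorem~\ref{T:graded-to-ses}, upon composing $\varphi$ with the canonical graded base-preserving isomorphism $\T(F)\cong\T(F_\rho)$ we may assume $\A=\B$ and that $\varphi$ is the identity on the base algebra; it then suffices to produce a unitary isomorphism of $C^*$-correspondences $U\colon E\to F$ over $\A$ (with respect to $\id_\A$), since undoing the reduction converts this into one implemented by $\rho=\varphi|_\A$.

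First I would harvest the induced data. Corollary~\ref{c:core-iso} yields $*$-isomorphisms $\varphi_n\colon B_n^E\to B_n^F$, i.e.\ $\varphi_n\colon\K(E^{\otimes n})\to\K(F^{\otimes n})$, and pushing the identities $\phi_{\F(E)}(a)\,T^{(n)}_\xi\,\phi_{\F(E)}(b)=T^{(n)}_{a\cdot\xi\cdot b}$ through $\varphi$ (using $\varphi|_{B_0}=\id$, gradedness, and the quotient maps $\Psi_n$ of Proposition~\ref{p:ses}) gives the compatibilities
$$\varphi_n\big(\phi_{E^{\otimes n}}(a)\,S\,\phi_{E^{\otimes n}}(b)\big)=\phi_{F^{\otimes n}}(a)\,\varphi_n(S)\,\phi_{F^{\otimes n}}(b)\qquad(a,b\in\A,\ S\in\K(E^{\otimes n})),$$
which extend to $M(\A)$ by strict continuity, together with the analogous compatibility of the $\varphi_n$ with the iterated left actions $\K(E^{\otimes m})\to\L(E^{\otimes n})$ for $m\le n$ (coming from $B_m^E B_n^E\subseteq B_n^E$). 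One also recovers $\overline{\langle E,E\rangle_\A}$ intrinsically, e.g.\ as $\Psi_0\!\big(\overline{\Span}\{\T(E)_1^{*}\T(E)_1\}\big)$ --- a description preserved by $\varphi$ since $\varphi(B_0)=B_0$ and $\varphi(B_{[1,\infty)})=B_{[1,\infty)}$ by Corollary~\ref{c:core-pres} --- so that $\overline{\langle E,E\rangle}=\overline{\langle F,F\rangle}$ as ideals of $\A$.

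For $U$ itself the cleanest route is through the vacuum representation. Since the right $\A$-action on $\F(F)$ (on the last tensor leg) commutes with every generator of $\T(F)$, for $x\in\T(F)_1$ the compression $x\,P_0^F$ is a right $\A$-module map $\A=\F(F)_0\to F=\F(F)_1$, hence of the form $L_\eta$ with $L_\eta(a)=\eta\cdot a$ for a unique $\eta\in F$; I set $U\xi:=\eta$ for $x=\varphi(T_\xi)$. Then $U$ is $\A$-bilinear (because $\varphi(T_{a\cdot\xi\cdot b})=\phi_{\F(F)}(a)\varphi(T_\xi)\phi_{\F(F)}(b)$) and inner-product preserving, since $L_{U\xi}^{*}L_{U\eta}=P_0^F\varphi(T_\xi^{*}T_\eta)P_0^F=P_0^F\phi_{\F(F)}(\langle\xi,\eta\rangle_\A)P_0^F$ is left multiplication by $\langle\xi,\eta\rangle_\A$; in particular $U$ is injective. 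The remaining --- and genuinely hard --- point is that $U$ is \emph{surjective}, and this is exactly where the hypothesis on $\A$ enters: I would invoke the structure theory of $C^*$-correspondences over a $C^*$-algebra of compact operators (writing $\A\cong\bigoplus_i\K(\H_i)$ and $E\cong\bigoplus_i\overline{\H_i}\otimes K_i$ with $K_i\cong\bigoplus_j\H_j\otimes M_{ij}$, so that $E$ is classified over $\id_\A$ by its ``multiplicity matrix'' $(\dim M_{ij})_{i,j}$) together with the fact that over such $\A$ every closed submodule of a Hilbert module is orthogonally complemented, so $\overline{UE}$ is a complemented submodule of $F$; it then remains to feed in the data above to show that $E$ and $F$ have the same multiplicity matrix and compatible block indexing, forcing $\overline{UE}=F$.

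The main obstacle is precisely this last reconstruction. The isomorphism $\varphi_1$ matches the minimal ideals of $\K(E)$ with those of $\K(F)$, and the $\phi$-compatibility applied to the central projections $p_j\in M(\A)$ refines this only to $\dim\H_j\cdot\dim M_{ij}=\dim\H_j\cdot\dim N_{\beta(i)j}$ for some bijection $\beta$ of blocks; this by itself recovers neither $\beta=\id$ nor $\dim M_{ij}$ --- over an infinite-dimensional block $\H_j$ the multiplicity is genuinely not seen, which is the phenomenon behind Example~\ref{ex:non-iso-base-iso}. To get past this one must use substantially more of $\varphi$ than the interaction of $B_0$ with $B_1$: the higher cores $\varphi_n$ together with their compatibility with the iterated left actions $\phi^{(n)}$ (which couple $\K(E^{\otimes n})$ back to $\K(E)$ and $\phi_E$), and a symmetric appeal to $\varphi^{-1}$ --- equivalently, one shows the mirror map $V\colon F\to E$ built from $\varphi^{-1}$ satisfies $VU=\id_E$ and $UV=\id_F$. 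It is the fact that $\varphi$ is an isomorphism of the \emph{whole} Toeplitz algebras, and not merely a compatible pair $(\rho,\varphi_1)$, that makes this rigidity available.
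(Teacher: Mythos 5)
Your corner construction of $U$ does produce an isometric bimodule map: the computation $L_{U\xi}^*L_{U\eta}=P_0^F\varphi(T_\xi^*T_\eta)P_0^F=P_0^F\phi_{\F(F)}(\langle\xi,\eta\rangle)P_0^F$ is fine, and bimodularity follows from $\varphi|_{\A}=\id$ (your intermediate assertion that every bounded right $\A$-module map $\A\to F$ is of the form $L_\eta$ with $\eta\in F$ is false in general -- such maps form the multiplier module of $F$ -- but for the specific compressions $\varphi(T_\xi)P_0^F$ it is repaired by approximating $\varphi(T_\xi)\in\T(F)_1$ by spans of monomials and using $\|L_\eta\|=\|\eta\|$). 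The genuine gap is exactly the step you flag yourself: surjectivity. Your primary route, decomposing $\A\cong\bigoplus_i\K(\H_i)$ and comparing multiplicity matrices, is by your own account insufficient, and your fallback ("show the mirror map $V$ built from $\varphi^{-1}$ satisfies $VU=\id_E$ and $UV=\id_F$") is asserted, not proved. It is not a formality: $\varphi(T_\xi)$ is in general not a creation operator $T_{U\xi}$; only its $(1,0)$ Fock corner agrees with $L_{U\xi}$, so feeding $T_{U\xi}$ into the mirror construction does not obviously return $\xi$. What is needed is that the space $\{x\in\T(F)_1 : xP_0^F=0\}$ of degree-one elements annihilating the vacuum is carried by $\varphi^{-1}$ onto the corresponding space for $E$; this can be arranged by identifying that space with $\overline{\T(F)_1 B^F_{[1,\infty)}}$ and invoking Corollary \ref{c:core-pres} (which rests on Proposition \ref{closed formula for cores}), but no such argument appears in your proposal, so the proof is incomplete at its crucial point. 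Note also that your worry about infinite multiplicities is misplaced as a diagnosis: Example \ref{ex:non-iso-base-iso} involves an isomorphism that is not graded, so it does not obstruct the graded statement you are proving.

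For comparison, the paper's proof is short and uses the compactness hypothesis head-on: by Corollary \ref{c:core-iso} the graded isomorphism induces a *-isomorphism $\varphi_1\colon B_1^E\to B_1^F$, i.e.\ $\K(E)\to\K(F)$, and since $\A$ is a subalgebra of compact operators, \cite[Corollary 1]{Asa08} gives directly that $\varphi_1=\Ad_U$ for a unitary, right $\A$-linear $U\colon E\to F$ -- so surjectivity comes for free from Asadi's theorem, which is precisely where the hypothesis on $\A$ enters. The left-module compatibility $U(a\cdot\xi)=a\cdot U(\xi)$ is then extracted from $\varphi|_{\A}=\id$, the identity $\Psi_1(aT)=a\Psi_1(T)$ on $B_{[1,\infty)}$, and density of $F\langle F,F\rangle$ in $F$. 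So your route is genuinely different (a vacuum-corner construction rather than rigidity of $\K(E)$), and if the missing surjectivity argument were supplied along the lines above it would arguably bypass the compact-operator hypothesis altogether; but as written, the key step is missing.
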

	
	\begin{proof}
		As $\A \cong \B$ via $\varphi|_{\A}$, without loss of generality we may assume that $\A = \B$ and that $\varphi|_{\A} = \id_{\A}$. By Corollary \ref{c:core-iso} we have a *-isomorphism $\varphi_1 : B_1^E \rightarrow B_1^F$ induced from $\varphi$ via $\varphi_1 := \Psi_1 \circ \varphi|_{B_1^E}$. After we identify $B^E_1$ and $B^F_1$ with $\K(E)$ and $\K(F)$ respectively, and since $\A$ and $\B$ are subalgebras of compact operators, we may appeal to \cite[Corollary 1]{Asa08} to see that the *-isomorphism $\varphi_1: \K(E)\to \K(F)$ is of the form $\Ad_U$ for some unitary operator $U:E\to F$ such that $U(\xi\cdot a)=U(\xi)\cdot a$ for $\xi\in E, a\in \A$. 
		
		It is left to show that $U(a\cdot \xi)= a \cdot U(\xi)$ for $\xi\in E,a\in A$. Let $\Psi_1: B_{[1,\infty)}\to B_1$ be the quotient map as in Proposition \ref{p:ses}. We first show that $\Psi_1(a \cdot T)= a \cdot \Psi_1(T)$ for all $a\in \A$ and $T \in B_{[1,\infty)}$. Indeed, write $T=S+W$, with $S \in B_1$ and $W \in B_{[2,\infty)}$. Clearly, $a \cdot S \in B_1$ and $a \cdot W \in B_{[2,\infty)}$. Thus, $\Psi_1(a T)=\Psi_1(aS+aW)=a S=a \Psi_1(T)$. Next, let $\xi,\eta\in E$ and $a\in \A$ be given. Then we have
		$$
		\varphi_1(T^{(1)}_{a\cdot \xi}T^{(1)*}_\eta)=T^{(1)}_{U(a\cdot \xi)}T^{(1)*}_{U(\eta)}
		$$
		On the other hand,
		$$
		\varphi_1(T^{(1)}_{a\cdot \xi}T^{(1)*}_\eta)=\Psi_1(\varphi(a \cdot T^{(1)}_{\xi}T^{(1)*}_\eta))=\Psi_1(a \cdot \varphi(T^{(1)}_\xi T^{(1)*}_{\eta}))=
		$$
		$$
		a \cdot \varphi_1(T^{(1)}_{\xi} T^{(1)*}_{\eta})=a \cdot T^{(1)}_{U(\xi)}T^{(1)*}_{U(\eta)}.
		$$
		In particular, it follows that $T^{(1)}_{U(a\cdot \xi)} \langle x,y \rangle =T^{(1)}_{a \cdot U(\xi)} \langle x,y \rangle $, for all $x,y\in F$.\\
		By page 5 in \cite{Lan95}, we have that $F \langle F, F \rangle$ is dense in $F$ so that by an $\frac{\epsilon}{2}$-argument we get that $U(a\cdot \xi)=a \cdot U(\xi)$, as required.
	\end{proof}

	\begin{Cor} \label{C:equiv-iso}
		Let $E$ and $F$ be $C^*$-correspondences over $\A$ and $\B$, respectively. Suppose that 
		$\A$ (or $\B$) is a subalgebra of compact operators.
Then the following are equivalent
		\begin{enumerate}
			\item
			The $C^*$-correspondences $E$ and $F$ are unitarily isomorphic.
			\item
			$\mathcal{T}_+(E)$ and  $\mathcal{T}_+(F)$ are graded completely isometrically isomorphic.
			\item $\T(E)$ and $\T(F)$ are base-preserving graded *-isomorphic. 
		\end{enumerate}
	\end{Cor}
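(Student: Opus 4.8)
The strategy is simply to close the cycle $(1)\Rightarrow(2)\Rightarrow(3)\Rightarrow(1)$, with the first two arrows holding for arbitrary coefficient algebras and only the last arrow using that $\A$ (or $\B$) is a compact operator subalgebra. In particular almost everything is a matter of assembling results already proved; the single non-formal input is rigidity of $*$-isomorphisms between algebras of compact operators on Hilbert modules.

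\textbf{The implications $(1)\Rightarrow(2)\Rightarrow(3)$.} These require no hypothesis and are contained in Corollary~\ref{C:main-square}. That corollary records $(1)\Rightarrow(2)$ directly; and it shows that item $(2)$ there implies item $(4)$ there, which in the numbering of the present corollary is precisely $(2)\Rightarrow(3)$. Concretely, the mechanism is Theorem~\ref{T:non-sa-to-sa}: a graded completely isometric isomorphism $\phi:\T_+(E)\to\T_+(F)$ is automatically base-preserving (the diagonal $\Delta(\T_+(E))$ equals $\A$), and it extends to a $*$-isomorphism $\varphi:\T(E)\to\T(F)$ which is again graded, hence base-preserving graded.

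\textbf{The implication $(3)\Rightarrow(1)$.} This is exactly Proposition~\ref{p:unitary-iso-cpt}. Given a base-preserving graded $*$-isomorphism $\varphi:\T(E)\to\T(F)$, one first composes with the correspondence-twisting by $\rho:=\varphi|_{\A}:\A\to\B$ to reduce to the case $\A=\B$, $\varphi|_{\A}=\id$. Corollary~\ref{c:core-iso} (via Corollary~\ref{c:core-pres} and Proposition~\ref{p:ses}) then yields the induced $*$-isomorphism $\varphi_1=\Psi_1\circ\varphi|_{B_1^E}:B_1^E\to B_1^F$, which under the canonical identifications $B_1^E\cong\K(E)$ and $B_1^F\cong\K(F)$ is a $*$-isomorphism $\K(E)\to\K(F)$. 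Since $\A$ (or $\B$) sits inside $\K(\mathcal H)$, the rigidity of such isomorphisms lets us write $\varphi_1=\Ad_U$ for a unitary $U:E\to F$ with $U(\xi\cdot a)=U(\xi)\cdot a$; a final $\tfrac{\eps}{2}$-argument using that $F\langle F,F\rangle$ is dense in $F$ upgrades this to $U(a\cdot\xi)=a\cdot U(\xi)$, so $U$ is a unitary isomorphism of $C^*$-correspondences implemented by $\varphi|_{\A}$.

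\textbf{Main obstacle.} The only genuinely non-formal point — and the only place the hypothesis is used — is the passage from a $*$-isomorphism $\K(E)\to\K(F)$ to an implementing unitary $E\to F$: this step can fail for general Hilbert $C^*$-modules and is where the compact-operator-subalgebra assumption (and the appeal to Asatani-type results) is essential; the rest is bookkeeping among Corollary~\ref{C:main-square}, Theorem~\ref{T:non-sa-to-sa}, and Proposition~\ref{p:unitary-iso-cpt}.
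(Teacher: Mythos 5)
Your proposal is correct and follows essentially the same route as the paper: $(1)\Rightarrow(2)$ and $(2)\Rightarrow(3)$ are quoted from Corollary \ref{C:main-square} (via Theorem \ref{T:non-sa-to-sa}), and $(3)\Rightarrow(1)$ is exactly Proposition \ref{p:unitary-iso-cpt}, with the compactness hypothesis entering only through the rigidity result of Asadi producing the implementing unitary. Your summary of where the hypothesis is needed matches the paper's argument precisely.
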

	
	\begin{proof}
		It is easy to show see that (1) implies (2) (see for instance the proof of \cite[Theorem 4.3 item (1)]{DO18} which works verbatim even when $\A$ and $\B$ are non-commutative), and by Corollary \ref{C:main-square} we have that (2) implies (3). Hence, we need only show that (3) implies (1). When $\A$ is a $C^*$-subalgebra of compact operators this follows from Proposition \ref{p:unitary-iso-cpt}.
	\end{proof}

	
	\section{K-theory}\label{S: K-theory}
	
	In this section we show how stabilized base-preserving s.e.s. isomorphisms induce isomorphisms of six-term short exact sequences of K-groups which only involves the coefficient algebras, Katsura ideals and Cuntz-Pimsner algebras. For the basics of K-theory we refer to \cite{Ror00}. In this section we will rely on K-theory computations in the context of Cuntz-Pimsner algebras from \cite[Section 8]{Kat04b}. Recall that for a $C^*$-correspondence $E$ we denote by $\J_E$ the ideal of relations in $\T(E)$ which coincides with $\K(\F(E)J_E)$.
	
	Let $E$ be a $C^*$-correspondence over $\A$. Denote by $\phi_0 : J_E \rightarrow \K(E^{\otimes 0}J_E) \subseteq \K(\F(E)J_E)$ the left action of the $C^*$-correspondence $E^{\otimes 0}$. By \cite[Proposition 8.1]{Kat04b} we see that $(\phi_0)_* : K_*(J_E) \rightarrow K_*(\J_E)$ is an isomorphism. Now let
	$$
	\begin{tikzcd}
	0 \arrow[r] & \J_E \arrow[r, "j"] & \T(E) \arrow[r] & \O(E) \arrow[r] & 0 
	\end{tikzcd}
	$$
	be the short exact sequence with embedding $j : \J_E \rightarrow \T(E)$. We denote by $i : J_E \rightarrow \A$ the natural embedding. Now let $\iota_{\A} : \A \rightarrow \D_E$ and $\iota_{\K(\E)} : \K(\E) \rightarrow \D_E$ be the $(1,1)$ and $(2,2)$ embedding into the linking algebra, which induce isomorphisms in K-theory. We define $[E] : K_*(J_E) \rightarrow K_*(\A)$ via the composition of $(\phi_E)_* : K_*(J_E) \rightarrow K_*(\K(E))$ and $E_* : K_*(\K(E)) \rightarrow K_*(\A)$, where $\phi_E$ is the left action on $E$ and $E_* = (\iota_{\A})_*^{-1}(\iota_{\K(E)})_*$. Then by the discussion preceding \cite[Theorem 8.6]{Kat04b} we obtain the following commutative diagram 
	$$
	\begin{tikzcd}
	K_*(\J_E) \arrow[r, "j_*"]  
	& K_*(\T(E)) \\
	K_*(J_E) \arrow[u, "(\phi_0)_*"] \arrow[r, "i_* - \text{[}E \text{]}"]
	& K_*(\A) \arrow[swap, u, "(\phi_{\F(E)})_*"]
	\end{tikzcd}
	$$
	Hence, when $F$ is another $C^*$-correspondence over $\B$ such that $\psi : \T(E) \rightarrow \T(F)$ is a base-preserving s.e.s. isomorphism, we denote by $\psi_0$ the restriction of $\psi$ to $\J_E$, by $\rho$ its restriction to $\A$, and by $\overline{\psi} : \O(E) \rightarrow \O(F)$ the induced *-isomorphism on the quotients. Denote $\tau_* : K_*(J_E) \rightarrow K_*(J_F)$ the isomorphism given by $(\phi_0^F)_*^{-1} (\psi_0)_* (\phi_0^E)_*$. We hence obtain the following commutative diagram.
	$$
	\begin{tikzcd}[column sep=scriptsize]
	|[alias = X]| K_*(\J_F) \arrow[to=Z, "(j_F)_*"] & & & |[alias = Z]| K_*(\T(F)) \\
	&  K_*(\J_E) \arrow[r, "(j_E)_*"] \arrow[to=X, "(\psi_0)_*"]
	& K_*(\T(E)) \arrow[to=Z, "\psi_*"] & \\
	&  K_*(J_E) \arrow[u, "(\phi^E_0)_*"] \arrow[r, swap, "i_* - \text{[}E \text{]}"] \arrow[to=W, "\tau_*"]
	& K_*(\A) \arrow[u, "(\phi_{\F(E)})_*"] \arrow[to=Y, "\rho_*"] & \\
	|[alias = W]| K_*(J_F) \arrow[to=X, "(\phi^F_0)_*"] \arrow[to=Y, swap, "i_* - \text{[}F \text{]}"] & & & |[alias = Y]| K_*(\B) \arrow[to=Z, "(\phi_{\F(F)})_*"]
	\end{tikzcd}
	$$
We denote by $\pi_E$ the composition of the embedding $\phi_{\F(E)} : \A \rightarrow \T(E)$ with the quotient map of $\T(E)$ to $\O(E)$. Let $e$ denotes the natural inclusion of any $C^*$-algebra inside its stabilization, by tensoring with a fixed rank-one projection. It is standard that the induced map $e_*$ between associated K-groups is an isomorphism.
	
\begin{Lemma}\label{L: corresp-stab}
Let $E$ be a $C^*$-correspondence over $\A$. Then
\[e_*^{-1}\circ(i^E_*-[E])\circ e_*=i_*^{E\otimes \K}-[E\otimes\K], \]
\end{Lemma}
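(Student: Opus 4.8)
The plan is to recast the asserted equality as the commutativity of the square
$$
\begin{tikzcd}
K_*(J_E) \arrow[r, "i^E_* - \text{[}E\text{]}"] \arrow[d, swap, "e_*"] & K_*(\A) \arrow[d, "e_*"] \\
K_*(J_{E \otimes \K}) \arrow[r, "i^{E\otimes\K}_* - \text{[}E \otimes \K\text{]}"] & K_*(\A \otimes \K),
\end{tikzcd}
$$
where the left-hand vertical map uses the identification $J_{E \otimes \K} \cong J_E \otimes \K$ from Corollary \ref{c:tensor-katsura-preserved}, legitimate since $\K$ is nuclear, hence exact. As $[E] = E_* \circ (\phi_E)_*$ by definition, it suffices to verify separately the three compatibilities
$$
i^{E\otimes\K}_* \circ e_* = e_* \circ i^E_*, \qquad (\phi_{E\otimes\K})_* \circ e_* = e_* \circ (\phi_E)_*, \qquad (E\otimes\K)_* \circ e_* = e_* \circ E_*,
$$
where $E_* = (\iota^E_{\A})_*^{-1}(\iota^E_{\K(E)})_*$ is built from the corner inclusions of the linking algebra $\D_E$.

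The first two are pure bookkeeping. Under the identifications recalled in the proof of Corollary \ref{c:tensor-katsura-preserved} -- namely $J_{E\otimes\K} \cong J_E \otimes \K$, $\K(E\otimes\K) \cong \K(E) \otimes \K$ and $\phi_{E\otimes\K} \cong \phi_E \otimes \id_{\K}$ -- the embedding $i^{E\otimes\K}$ becomes $i^E \otimes \id_{\K}$ and $\phi_{E\otimes\K}$ becomes $\phi_E \otimes \id_{\K}$. Since $e$ is the map $c \mapsto c \otimes p$ for a fixed rank-one projection $p$, one gets the on-the-nose identities $i^{E\otimes\K} \circ e = e \circ i^E$ and $\phi_{E\otimes\K} \circ e = e \circ \phi_E$ of $\ast$-homomorphisms, and the first two compatibilities follow upon applying $K$-theory.

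The substantive point is the third compatibility. Here I would first establish that forming linking algebras is compatible with stabilization: since $E \otimes \K$ is the external tensor product of $E$ over $\A$ with $\K$ regarded as a correspondence over itself, inspecting the four corners produces a $\ast$-isomorphism $\D_{E\otimes\K} \cong \D_E \otimes \K$ carrying the canonical corner inclusions to one another, i.e. $\iota^{E\otimes\K}_{\A\otimes\K} = \iota^E_{\A} \otimes \id_{\K}$ and $\iota^{E\otimes\K}_{\K(E\otimes\K)} = \iota^E_{\K(E)} \otimes \id_{\K}$, using $\K(E\otimes\K) \cong \K(E)\otimes\K$ once more. One also records that $e : \D_E \to \D_E \otimes \K$ commutes with the corner inclusions, $e \circ \iota^E_{\A} = (\iota^E_{\A} \otimes \id_{\K}) \circ e$ and likewise for $\K(E)$. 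Since all the maps $e_*$, $(\iota^E_{\A})_*$, $(\iota^E_{\K(E)})_*$, $(\iota^{E\otimes\K}_{\A\otimes\K})_*$ and $(\iota^{E\otimes\K}_{\K(E\otimes\K)})_*$ are isomorphisms on $K$-theory, a short diagram chase then yields $e_* \circ E_* = (\iota^E_{\A} \otimes \id_{\K})_*^{-1}(\iota^E_{\K(E)} \otimes \id_{\K})_* \circ e_* = (E\otimes\K)_* \circ e_*$. Combining the three compatibilities gives $[E\otimes\K]\circ e_* = e_* \circ [E]$, and hence the lemma.

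I expect the sole point requiring genuine (but routine) verification to be the compatibility $\D_{E\otimes\K} \cong \D_E \otimes \K$ with its corner inclusions; everything downstream of it is formal functoriality of $K$-theory. If one wishes to bypass the linking algebra, an alternative is to use that, for $E$ full, $E_*$ is the $K$-theory isomorphism implemented by the $\K(E)$--$\overline{\langle E,E\rangle}$ imprimitivity bimodule $E$, together with naturality of Morita invariance under tensoring with $\K$ (reducing to the non-full case by restricting to $\overline{\langle E,E\rangle}$); but keeping the linking-algebra description is more uniform with Katsura's framework in \cite[Section 8]{Kat04b}.
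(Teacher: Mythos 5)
Your proposal is correct and follows essentially the same route as the paper: both arguments reduce the claim to the three compatibilities $e\circ i^E=i^{E\otimes\K}\circ e$, $e\circ\phi_E=\phi_{E\otimes\K}\circ e$, and $e_*\circ E_*=(E\otimes\K)_*\circ e_*$, with the last handled by checking that the corner inclusions of the linking algebra intertwine $e$ (the paper leaves the identification $\D_{E\otimes\K}\cong\D_E\otimes\K$ implicit, which you spell out). Your Morita-theoretic aside is a legitimate alternative but not what the paper does.
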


\begin{proof}
It is readily verified that $e \circ i^E = i^{E \otimes \K} \circ e$, and that $e \circ \phi_E = (\phi_E \otimes \id_{\K}) \circ e = \phi_{E \otimes \K} \circ e$. Thus, from the definitions of $[E]$ and $[E\otimes \K]$, it will suffice to show that $E_* = e_*^{-1} \circ (E \otimes \K)_* \circ e_*$. Similarly to before we have that $e \circ \iota_{\A} = \iota_{\A \otimes \K} \circ e$ and $e \circ \iota_{\K(E)} = \iota_{\K(E) \otimes \K} \circ e$. Thus, we get that
$$
E_* = (\iota_{\A})_*^{-1} \circ (\iota_{\K(E)})_* = 
$$
$$
e_*^{-1} \circ (\iota_{\A \otimes \K})_*^{-1} \circ (\iota_{\K(E \otimes \K)})_* \circ e_* = e_*^{-1} \circ (E \otimes \K)_* \circ e_*,
$$
and the proof is complete.
\end{proof}
	
\begin{Thm} \label{T:K-theory-six-term}
Let $E$ and $F$ be $C^*$-correspondences over $\A$ and $\B$, respectively. Assume $\psi : \T(E \otimes \K) \rightarrow \T(F \otimes \K)$ is a base-preserving s.e.s. isomorphism. Then we have the following commutative diagram:
$$
\begin{tikzcd}
		|[alias = X]| K_1(\O(F)) \arrow[to=Z] & & & |[alias = Z]| K_0(J_F) \arrow[to=U, "i_0 - \text{[}F\text{]}_0"] \\
		& K_1(\O(E)) \arrow[r] \arrow[to=X, "\overline{\psi}_1"] & K_0(J_E) \arrow[d, "i_0 -\text{[}E\text{]}_0"] \arrow[to=Z, "\tau_0"] & \\
		|[alias = V]| K_1(\B) \arrow[to=X, "(\pi_F)_1"] & K_1(\A) \arrow[u, "(\pi_E)_1"] \arrow[l, swap,  "\rho_1"] & K_0(\A) \arrow[d, "(\pi_E)_0"] \arrow[r, "\rho_0"] & |[alias = U]| K_0(\B) \arrow[to=Y, "(\pi_F)_0"] \\
		& K_1(J_E) \arrow[u, "i_1 - \text{[}E\text{]}_1"] \arrow[to=W, "\tau_1"] & K_0(\O(E)) \arrow[l] \arrow[to=Y, "\overline{\psi}_0"] & \\
		|[alias = W]| K_1(J_F) \arrow[to=V, "i_1 - \text{[}F\text{]}_1"] & & & |[alias = Y]| K_0(\O(F)) \arrow[to=W]
		\end{tikzcd}
$$
\end{Thm}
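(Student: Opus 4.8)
The plan is to assemble the diagram from four ingredients already at hand: the commutative ``box'' relating $K_*(\J_G)$, $K_*(\T(G))$, $K_*(J_G)$ and $K_*$ of the coefficient algebra that was recalled just before the statement; Katsura's six-term exact sequence \cite[Theorem 8.6]{Kat04b} together with the isomorphisms $(\phi_0)_*$ from $K_*(J_G)$ to $K_*(\J_G)$ and $(\phi_{\F(G)})_*$ from $K_*$ of the coefficient algebra to $K_*(\T(G))$; naturality of the six-term sequence under isomorphisms of short exact sequences; and the stabilization identifications from Proposition \ref{P: KatIdealPres}, Corollary \ref{c:tensor-katsura-preserved} and Lemma \ref{L: corresp-stab}. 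Everything is first carried out at the level of $E \otimes \K$ and $F \otimes \K$, and then transported back down to $E$ and $F$ via the canonical inclusions into the stabilization.

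First I would unpack the hypothesis: since $\psi$ is a base-preserving s.e.s.\ isomorphism it is an isomorphism of the short exact sequences \eqref{eq:ses-iso} for $E \otimes \K$ and $F \otimes \K$; write $\psi_0$ for its restriction to $\J_{E\otimes\K}$, $\rho$ for its restriction $\A \otimes \K \to \B \otimes \K$ (well-defined because $\psi$ is base-preserving and $\A\otimes\K = \phi_{\F(E\otimes\K)}(\A\otimes\K)$), and $\overline{\psi} \colon \O(E\otimes\K) \to \O(F\otimes\K)$ for the induced isomorphism on quotients. Naturality of the K-theory six-term sequence then yields a commuting morphism from the six-term exact sequence of $\J_{E\otimes\K} \hookrightarrow \T(E\otimes\K) \twoheadrightarrow \O(E\otimes\K)$ to the one for $F\otimes\K$, with vertical maps $(\psi_0)_*$, $\psi_*$ and $\overline{\psi}_*$.

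Next I would push this morphism through Katsura's identifications. Combining the recalled box with the six-term exact sequence of $\J \hookrightarrow \T \twoheadrightarrow \O$ --- this is precisely \cite[Theorem 8.6]{Kat04b} --- conjugation by $(\phi_0)_*$ and $(\phi_{\F})_*$ turns the boundary maps into $i_* - [E\otimes\K]$ and turns the map $K_*(\A\otimes\K) \to K_*(\O(E\otimes\K))$ into $(\pi_{E\otimes\K})_*$, and likewise for $F$. Under these identifications the vertical maps become $\tau_* = (\phi_0^{F\otimes\K})_*^{-1}(\psi_0)_*(\phi_0^{E\otimes\K})_*$ on the $K_*(J)$-columns (which is its definition), $\rho_*$ on the columns of $K_*$ of the base since $\psi \circ \phi_{\F(E\otimes\K)} = \phi_{\F(F\otimes\K)} \circ \rho$ by base-preservation, and $\overline{\psi}_*$ on the $\O$-columns. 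This produces a commuting morphism between the two six-term exact sequences, now expressed purely through $J_{E\otimes\K}$, $\A\otimes\K$, $\O(E\otimes\K)$ and their $F$-analogues.

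Finally I would remove the stabilization. Via $J_{E\otimes\K} \cong J_E \otimes \K$ (Corollary \ref{c:tensor-katsura-preserved}) and $\O(E\otimes\K) \cong \O(E)\otimes\K$ (Proposition \ref{P: KatIdealPres}), the canonical inclusion $e$ of a $C^*$-algebra into its stabilization induces isomorphisms $e_*$ identifying $K_*(J_E)$, $K_*(\A)$, $K_*(\O(E))$ with $K_*(J_{E\otimes\K})$, $K_*(\A\otimes\K)$, $K_*(\O(E\otimes\K))$, and similarly for $F$. Lemma \ref{L: corresp-stab} gives $e_*^{-1}(i_*-[E\otimes\K])e_* = i_* - [E]$, and an analogous computation on generators (using $e\circ\phi_{\F(E)} = \phi_{\F(E\otimes\K)}\circ e$ and $e\circ q_E = q_{E\otimes\K}\circ e$) gives $e_*^{-1}(\pi_{E\otimes\K})_*e_* = (\pi_E)_*$. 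Transporting the commuting morphism of six-term sequences through these $e_*$, and recalling that $\tau_i$, $\rho_i$, $\overline{\psi}_i$ in the statement are by construction the $e_*$-transports of $\tau_*$, $\rho_*$, $\overline{\psi}_*$, produces exactly the displayed diagram, with exactness of the two hexagons supplied by \cite[Theorem 8.6]{Kat04b}. The main obstacle here is purely organizational: three simultaneous layers of identification are in play --- Katsura's $(\phi_0)_*$ and $(\phi_{\F})_*$, the tensor isomorphisms of Proposition \ref{P: KatIdealPres}, and the stabilization maps $e_*$ --- and one must verify that each arrow named in the target diagram, most delicately $i_* - [E]$ and $(\pi_E)_*$, is genuinely the composite obtained after transporting along all three; Lemma \ref{L: corresp-stab} already absorbs the one genuinely non-formal computation, and the rest is checking compatibilities on generators.
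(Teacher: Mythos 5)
Your proposal is correct and follows essentially the same route as the paper: establish the commutative diagram (the ``cube'') relating $K_*(\J)$, $K_*(\T)$, $K_*(J)$ and $K_*$ of the base for a base-preserving s.e.s.\ isomorphism, apply Katsura's six-term sequence \cite[Theorem 8.6]{Kat04b} to $E\otimes\K$ and $F\otimes\K$, and transport back to $E$ and $F$ through the identifications of Proposition \ref{P: KatIdealPres}, Corollary \ref{c:tensor-katsura-preserved} and Lemma \ref{L: corresp-stab}. Your write-up is in fact slightly more explicit than the paper's (naturality of the boundary maps and the compatibility $e_*^{-1}(\pi_{E\otimes\K})_*e_*=(\pi_E)_*$ are spelled out rather than asserted), but the underlying argument is the same.
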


\begin{proof}
By Proposition \ref{P: KatIdealPres} at the level of K-theory we that 
$$
K_*(\T(E)) = K_*(\T(E \otimes K)), \ K_*(\O(E)) = K_*(\O(E\otimes \K))
$$
$$ 
\text{ and } \ K_*(\J_E) = K_*(\J_{E\otimes \K}).
$$
Furthermore, by Corollary \ref{c:tensor-katsura-preserved} and exactness of $\K$ we have that $K_*(J_E) = K_*(J_{E \otimes \K})$. These identifications via $e_*$ are obtained so that 
$$(\phi_{\F(E)})_* = (\phi_{\F(E \otimes \K)})_*, \ (j_E)_* = (j_{E \otimes \K})_*
$$
$$
\text{ and } \ (\phi_0^E)_* = (\phi_0^{E \otimes \K})_*, \ (\pi_E)_* = (\pi_{E \otimes \K})_*.
$$ 
Moreover, by Lemma \ref{L: corresp-stab} we also have $i_*^E - [E] = i_*^{E\otimes \K} - [E \otimes \K]$. Clearly the same also hold for $F$ instead of $E$.

Thus, from the discussion preceding Lemma \ref{L: corresp-stab} combined with \cite[Theorem 8.6]{Kat04b} applied to the $C^*$-corres\-pondences $E \otimes \K$ and $F \otimes \K$, we obtain the desired diagram.
\end{proof}
	
When $E$ and $F$ are $C^*$-correspondences over subalgebras of compact operators $\A$ and $\B$, we are able to compute $\tau_*$. In this case both $J_E$ and $J_F$ are subalgebras of compacts, and must hence be direct sums of algebras of compact operators. From additivity of $K_1$ we get that $K_1(J_E) = K_1(J_F) = \{0\}$ and hence $\tau_1 = 0$. Thus, we need only compute $\tau_0$.

For a $C^*$-algebra $\C$ we use the standard picture of $K_0$ from \cite[Proposition 4.2.2]{Ror00} to express $K_0(\C)$ as differences of equivalence classes $[q]_0 - [s(q)]_0$ for $q \in \P_{\infty}(\C^1)$ where $\C^1$ is the unitization of $\C$ (even if it is unital), and $s : \C^1 \rightarrow \C^1$ is the scalar map (see \cite[Subsection 4.2]{Ror00}). Recall also that if $\varphi : \A \rightarrow \B$ is a *-homomorphism, we denote its unitization by $\varphi^1$.
	
\begin{Prop} \label{P:compute-tau}
Let $E$ and $F$ be $C^*$-correspondences over $\A$ and $\B$, respectively, and assume that $\A$ (or $\B$) are subalgebras of compact operators. If $\psi : \T(E) \rightarrow \T(F)$ is a base-preserving s.e.s. isomorphism, then $\psi(J_E) = J_F$ and $\tau_0 = (\psi|_{J_E})_0$.
	\end{Prop}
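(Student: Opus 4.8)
The plan is to prove the two assertions in turn. First I would reduce, by composing $\psi$ with the canonical base-preserving s.e.s.\ isomorphism $\T(F)\cong\T(F_\rho)$ where $\rho=\psi|_\A$, to the case $\A=\B$ and $\psi|_\A=\id_\A$; then $\psi$ fixes $\A$ pointwise, carries $\J_E$ onto $\J_F$, and it is enough to prove $J_E=J_F$ as ideals of $\A$ and $\tau_0=(\psi|_{J_E})_0$. The basic input is the concrete form of Katsura's embedding: for $a\in J_E$ one has $\phi_0^E(a)=\iota^E(a)-\psi_{T^E}(\phi_E(a))$, which as an operator on the Fock module $\F(E)$ is the diagonal corner $a\oplus 0\oplus 0\oplus\cdots$; in particular $\iota^E(a)=\phi_0^E(a)+\psi_{T^E}(\phi_E(a))$ is an orthogonal sum of positive elements, and $\iota^E(b)\,\phi_0^E(a)\,\iota^E(c)=\phi_0^E(bac)$ for $b,c\in\A$. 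From this one also reads off the intrinsic description
$$
\phi_0^E(J_E)=\{\,x\in\J_E:\ x\,T_\xi=0=T_\xi^{*}\,x\ \text{for all }\xi\in E\,\}=\J_E\cap\bigl(B^E_{[1,\infty)}\bigr)^{\perp},
$$
and the analogous statements for $F$.

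For $\psi(J_E)=J_F$: since $\A$ (equivalently $\B$) is a subalgebra of compact operators, write $\A=\bigoplus_\lambda\K(\H_\lambda)$; then $J_E$ and $J_F$ are sums of sub-families of these blocks, and a block $\K(\H_\lambda)$ lies in $J_E$ precisely when the corner $\iota^E(p)\,\phi_0^E(J_E)\,\iota^E(p)=\phi_0^E(pJ_Ep)$ is nonzero, $p$ being a minimal projection of $\K(\H_\lambda)$, and likewise for $F$. As $\psi$ fixes $\A$ and $\psi(\iota^E(p)\,\phi_0^E(J_E)\,\iota^E(p))=\iota^E(p)\,\psi(\phi_0^E(J_E))\,\iota^E(p)$, the equality $\psi(J_E)=J_F$ reduces to $\psi(\phi_0^E(J_E))=\phi_0^F(J_F)$. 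This is the crux, and here lies the main obstacle: $\psi$ need not be graded, so a priori $\psi(B^E_{[1,\infty)})\neq B^F_{[1,\infty)}$ and there is no formal reason for $\psi$ to carry $\phi_0^E(J_E)=\J_E\cap(B^E_{[1,\infty)})^{\perp}$ onto $\phi_0^F(J_F)$. One pushes this through by exploiting that $\A$ — hence $\J_E$, the core subalgebras $B_n$, and $B^E_{[1,\infty)}$ — are subalgebras of compact operators, so that the relevant $K_0$-groups are free abelian on the respective sets of blocks and $\psi$ induces bijections of the blocks of $\J_E$ and of $\A$; tracking where the hereditary subalgebra $\phi_0^E(J_E)$, the two-sided annihilator of $B^E_{[1,\infty)}$ inside $\J_E$, sits relative to these blocks forces the identification $\psi(\phi_0^E(J_E))=\phi_0^F(J_F)$.

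For $\tau_0=(\psi|_{J_E})_0$: by the definition $\tau_\ast=(\phi_0^F)_\ast^{-1}(\psi_0)_\ast(\phi_0^E)_\ast$ with $\psi_0=\psi|_{\J_E}$, and since $(\phi_0^F)_\ast$ is an isomorphism by \cite[Proposition~8.1]{Kat04b}, it suffices to show $\psi\circ\phi_0^E=\phi_0^F\circ(\psi|_{J_E})$ as $\ast$-homomorphisms $J_E\to\J_F$; then $\tau_\ast=(\psi|_{J_E})_\ast$, so in particular $\tau_0=(\psi|_{J_E})_0$. It is enough to verify this on a projection $c$ in a block of $J_E$, since such projections span a dense subspace. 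Now $\phi_0^E(c)$ is the identity of the corner $\iota^E(c)\,\phi_0^E(J_E)\,\iota^E(c)=\phi_0^E(cJ_Ec)$ of $\J_E$; by the previous step $\psi$ fixes $\A$ and maps $\phi_0^E(J_E)$ onto $\phi_0^F(J_F)$, so it maps this corner onto $\iota^E(c)\,\phi_0^F(J_F)\,\iota^E(c)=\phi_0^F(cJ_Fc)$, whose identity is $\phi_0^F(c)$; therefore $\psi(\phi_0^E(c))=\phi_0^F(c)$, as required. (One can also extract from the commutative square preceding \cite[Theorem~8.6]{Kat04b} the relation $(i^F_\ast-[F])\,\tau_\ast=\rho_\ast(i^E_\ast-[E])$, but since $i^F_\ast-[F]$ need not be injective this does not by itself give $\tau_0=(\psi|_{J_E})_0$ — the substance sits in the block-matching of the previous step.)
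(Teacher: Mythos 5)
Both halves of your argument rest on the identity $\psi(\phi_0^E(J_E))=\phi_0^F(J_F)$ (and, in the second step, on the even stronger intertwining $\psi\circ\phi_0^E=\phi_0^F\circ(\psi|_{J_E})$), and this identity is false in general, so the gap you flag as the ``crux'' cannot be closed by any block-tracking argument. Concretely, take $\A=\B=\bC$ and $E=F=\bC$, so that $\T(E)$ is the classical Toeplitz algebra acting on $\F(E)=\ell^2(\bN)$, $J_E=\bC$, $\J_E=\K(\ell^2(\bN))$, and $\phi_0^E(J_E)=\bC e_{00}$ is spanned by the vacuum projection. Let $u\in\T(E)$ be a unitary of the form ``identity plus compact'' which rotates the vacuum vector into the span of the first two basis vectors, and set $\psi=\Ad_u$. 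Then $\psi$ is an automorphism of $\T(E)$ which fixes $\A=\bC I$ pointwise (so it is base-preserving, already in reduced form) and maps $\K(\ell^2(\bN))$ onto itself (so it is s.e.s.), yet $\psi(e_{00})=ue_{00}u^*\neq e_{00}$, i.e.\ $\psi(\phi_0^E(J_E))\neq\phi_0^F(J_F)$ and $\psi\circ\phi_0^E\neq\phi_0^F$. Only the image of these statements in $K_0$ survives (here $[ue_{00}u^*]_0=[e_{00}]_0$), which is consistent with the proposition but shows that your route --- deducing the block matching of $J_E$ and $J_F$, and then the exact equality $\psi(\phi_0^E(c))=\phi_0^F(c)$ on projections, from a set-level identification of the zeroth corners --- cannot work; indeed your justification of that identification (``tracking where the hereditary subalgebra sits relative to the blocks forces it'') is only a sketch and is refuted by the example above.

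The paper's proof uses the compactness hypothesis in a different and essential way: after the same reduction to $\A=\B$ and $\psi|_{\A}=\id_{\A}$, it invokes \cite[Corollary 1]{Asa08} to write the *-isomorphism $\psi_0:\J_E=\K(\F(E)J_E)\to\J_F=\K(\F(F)J_F)$ as $\Ad_U$ for an $\A$-linear unitary $U:\F(E)J_E\to\F(F)J_F$. The equality $J_E=J_F$ then follows at once because $U$ preserves the $\A$-valued inner products, so the two modules have the same ideal of coefficients; and $\tau_0=\id_{K_0(J_E)}$ is obtained not from an algebraic intertwining of $\phi_0^E$ and $\phi_0^F$ but from a Murray--von Neumann equivalence computation in the standard picture of $K_0$: for a projection $q$ over $(J_E)^1$ one has $[UP_0qP_0U^*]_0=[P_0qP_0]_0$ because $UP_0qP_0$ is a partial isometry realizing the equivalence. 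In other words, the correct statement is exactly the $K_0$-shadow of the identity you tried to prove, and the unitary implementation supplied by Asadi's theorem is the missing ingredient that your proposal would need.
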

	
	\begin{proof}
		Without loss of generality, we assume that $\A=\B$ and $\rho = \psi|_{\A} = \id_{\A}$. Since $\psi_0 : \J_E \rightarrow \J_F$ is a *-isomorphism, by \cite[Corollary 1]{Asa08} there is an $\A$-unitary $U : \F(E)J_E \rightarrow \F(F)J_F$ such that $\psi_0 = \Ad_U$. In particular, 
$$
J_E = \langle \F(E)J_E, \F(E)J_E \rangle =  \langle \F(F)J_F, \F(F)J_F \rangle = J_F
$$
so that the first part is proven. 

Next we show that $\tau_0 = (\id_{J_E})_0 = \id_{K_0(J_E)}$. Let $P_0^F: \F(F)J_E\to J_E$
be the projection onto $F^{\otimes 0}J_E=J_E=E^{\otimes 0}J_E\subseteq \F(F)J_E$. As $\tau_0 = (\phi_0^F)_0^{-1} \circ (\psi_0)_0 \circ (\phi_0^E)_0$, it will suffice to show that $(\Ad_U)_0 \circ (\phi_0^E)_0 = (\phi_0^F)_0$. Here we will abuse notation and simply write $P_0^F$ and $U$ to mean the $n$-direct sums $(P_0^F)^{(n)}$ and $U^{(n)}$ for $n\in \bN$. 

Suppose $q \in \P_{\infty}((J_E)^1)$ is of size $n\times n$. Then
$$
((\Ad_U)_0 \circ (\phi_0^E)_0)\big([q]_0 - [s(q)]_0 \big) = (\Ad_U)_0 \big([P_0^F q P_0^F]_0 - [s(q)P_0^F]_0 \big) = 
$$
$$
[UP_0^F q P_0^F U^*]_0 - [U(s(q)P_0^F)U^*]_0
$$
Now, since $UP_0^F q P_0^F U^* =UP_0^F q P_0^F P_0^F q U^*$ and $UP_0^F q P_0^F$ is in the unitization $\K(\F(F)J_E)^1$, we see that
$$
[UP_0^F q P_0^F U^*]_0 = [P_0^F q P_0^FU^* UP_0^F q P_0^F]_0 = [P_0^F q P_0^F]_0 = ((\phi_0^F)^1)_0([q]_0),
$$
and since $s(q)$ is a projection which commutes with $(P_0^F)^{(n)}$ and $U^{(n)}$ we have that $Us(q)P_0^FU^* = Us(q) P_0^F P_0^F s(q) U^*$ and $s(q)UP_0^F$ is in the unitization $\K(\F(F)J_E)^1$. Hence we also get that
$$
[U(s(q)P_0^F)U^*]_0 = [U s(q)P_0^F P_0^F s(q) U^*]_0 = [s(q)P_0^F]_0 = ((\phi_0^F)^1)_0([s(q)]_0)
$$
Hence, it follows from the standard picture of $K_0$ that $(\Ad_U)_0 \circ (\phi_0^E)_0 = (\phi_0^F)_0$, and we are done.
\end{proof}
	
\section{Hierarchy for graph algebras}\label{S: hierarchy for graph algebras}
	
	We briefly recall the construction of the self-adjoint algebras associated to a directed graph. For more details, we refer the reader to \cite{Rae05} and \cite{DT02}. Let $G=(V,E,r,s)$ be a directed graph with range and source maps $r,s: E \rightarrow V$. We denote by $A_G$ the adjacency matrix for $G$ given by
	$$
	A_G(v,w) = | \{ \ e \in E \ | \ r(e) =v , s(e) =w \ \}|.
	$$
and by $E^{\bullet}$ the collection of all finite paths $\lambda$ in $G$. We also denote by $V^{\reg}$ those vertices $v\in V$ such that $0 < |r^{-1}(v)| < \infty$ and $V^{\fin}$ the vertices $v \in V$ such that $|r^{-1}(v)| < \infty$. We say that $G$ is a \textit{row-finite} graph if $V= V^{\fin}$.
	
	A family $S = (S_v,S_e)_{v\in V,e\in E}$ of operators on Hilbert space $\mathcal{H}$ is a \emph{Toeplitz-Cuntz-Krieger} (TCK) family if
	\begin{enumerate}
		\item
		$\{S_v\}_{v\in V}$ is a set of pairwise orthogonal projections;
		
		\item
		$S_e^*S_e = S_{s(e)}$ for every $e\in E$;
		
		\item
		$\sum_{e\in F} S_eS_e^* \leq S_v$ for every finite subset $F\subseteq r^{-1}(v)$.
	\end{enumerate}
	
	We say that $S$ is a \emph{Cuntz-Krieger} (CK) family if additionally
	\begin{enumerate}
		\item[(4)]
		$\sum_{e \in r^{-1}(v)}S_eS_e^* = S_v$ for every $v\in V^{\reg}$.
	\end{enumerate}
	
	We denote by $\mathcal{T}(G)$ and $\mathcal{O}(G)$ the universal $C^*$-algebras generated by TCK and CK families, respectively. When $G$ is finite with no sinks or sources, $\mathcal{O}(G)$ is the celebrated Cuntz-Krieger algebra of $G$ which is intimately related to the subshift of finite type determined by $G$ (See \cite{CK80}).
	
A natural way to realize $\T(G)$ is by using the left regular TCK family. Let $\H_G := \ell^2(E^{\bullet})$ be the Hilbert space with orthonormal basis $\{ \ \xi_{\lambda} \ | \ \lambda \in E^{\bullet} \ \}$. For each $v\in V$ and $e \in E$ we define
\[
L_v(\xi_{\mu}) = \begin{cases} 
\xi_{\mu} & \text{if } r(\mu) = v \\ 
0 & \text{if } r(\mu) \neq v
\end{cases} \ \ \text{and} \ \
L_e(\xi_{\mu}) = \begin{cases} 
\xi_{e \mu} & \text{if } r(\mu) = s(e) \\ 
0 & \text{if } r(\mu) \neq s(e).
\end{cases}
\]
Then $L=(L_v,L_e)$ is a TCK family and we call it the left regular TCK family. By universality of $\T(G)$ we have a surjective $*$-isomorphism $\pi_L : \T(G) \rightarrow C^*(L)$ which turns out to be injective. Hence $\T(G) \cong C^*(L)$, and we will henceforth identify these algebras without further mention.

For $v\in V$ denote by $\H_{G,v}$ the Hilbert space with orthonormal basis $\{ \ \xi_{\lambda} \ | \ s(\lambda) = v \ \}$. Note that $\H_{G,v}$ is reducing for $L = (L_v,L_e)$, so we denote by $\pi_v : \T(G) \rightarrow B(\H_{G,v})$ the restriction to this subspace. It is easily verified that for any $v\in V$ we have that $L_v - \sum_{e\in r^{-1}(v)}L_e L_e^*$ is the rank one projection onto $\bC \xi_v$. Hence, the ideal generated by $L_v - \sum_{e\in r^{-1}(v)}L_e L_e^*$ for $v \in V^{\fin}$ is $\oplus_{v\in V^{\fin}} \K(\H_{G,v})$, and we denote it by $\I_G$.

\begin{Prop} \label{P: minimal-essential}
Let $G = (V,E)$ be a row-finite directed graph. Then $\I_G$ is a minimum essential ideal in $\T(G)$.
\end{Prop}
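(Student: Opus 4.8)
The plan is to exploit the explicit description $\I_G = \bigoplus_{v \in V}\K(\H_{G,v})$ (valid here because $G$ is row-finite, so $V = V^{\fin}$) together with the orthogonal decomposition $\H_G = \bigoplus_{v\in V}\H_{G,v}$ of the Fock space into the reducing subspaces carrying the left regular TCK family. First I would record two structural facts: each $\K(\H_{G,v})$, being the $v$-th direct summand of the closed ideal $\I_G$ of $\T(G)$, is itself a closed two-sided ideal of $\T(G)$ (a closed ideal of a closed ideal of a $C^*$-algebra is again a closed ideal, via an approximate identity of $\I_G$); and each $\K(\H_{G,v})$ is simple.

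For essentiality, the cleanest route is to verify that the annihilator $\I_G^{\perp} = \{ T \in \T(G) : T\I_G = 0 \}$ is trivial, which is equivalent to $\I_G$ being essential. Given $T$ with $T\I_G = 0$, then for every $v \in V$ and every $\xi \in \H_{G,v}$ the rank-one operator $\theta_{\xi,\xi}$ lies in $\K(\H_{G,v}) \subseteq \I_G$, so $T\theta_{\xi,\xi} = 0$; evaluating at $\xi$ gives $\|\xi\|^2 T\xi = 0$, whence $T\xi = 0$. Since $\bigoplus_{v\in V}\H_{G,v} = \H_G$ it follows that $T = 0$. This is exactly the step where row-finiteness is used: it guarantees that \emph{every} vertex, not merely those in $V^{\fin}$, indexes a summand of $\I_G$, so that the $\H_{G,v}$ appearing here span all of $\H_G$.

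To see that $\I_G$ is the smallest essential ideal, I would take an arbitrary essential ideal $J$ of $\T(G)$. For each $v \in V$ the ideal $\K(\H_{G,v})$ is nonzero, so essentiality of $J$ gives $J \cap \K(\H_{G,v}) \neq 0$; by simplicity of $\K(\H_{G,v})$ this intersection equals $\K(\H_{G,v})$, so $\K(\H_{G,v}) \subseteq J$. Since $J$ is closed and $\I_G$ is the closed linear span of the summands $\K(\H_{G,v})$, we conclude $\I_G \subseteq J$, and hence $\I_G$ is the minimum essential ideal.

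I do not expect a genuine obstacle here: once the two direct-sum decompositions are in place, the argument is elementary operator theory together with basic ideal theory of $C^*$-algebras. The points worth stating carefully are the use of row-finiteness to align the two index sets (without it $\I_G$ is not essential, since a nonzero operator supported on $\H_{G,w}$ for a vertex $w$ receiving infinitely many edges would annihilate $\I_G$), and the fact that each $\K(\H_{G,v})$ is an ideal of all of $\T(G)$, so that its simplicity can be pitted against an arbitrary essential ideal rather than merely used inside $\I_G$.
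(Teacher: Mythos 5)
Your proof is correct, and the essentiality half takes a genuinely different route from the paper's. The paper argues through representation theory: if $\I$ is an ideal with $\I \cap \I_G = \{0\}$, then the quotient map does not annihilate any gap projection $L_v - \sum_{e\in r^{-1}(v)}L_eL_e^*$, so by the classification of such quotients in \cite[Theorem 3.2 \& Corollary 3.3]{DS18} the quotient is unitarily equivalent to a multiple of the left regular representation plus a CK part, hence injective, forcing $\I = \{0\}$. You instead compute the annihilator directly: since $\T(G)$ is faithfully represented on $\H_G = \bigoplus_{v\in V}\H_{G,v}$ and, by row-finiteness, every summand $\K(\H_{G,v})$ lies in $\I_G$ as an ideal of $\T(G)$, any $T$ with $T\I_G = 0$ kills each rank-one $\theta_{\xi,\xi}$ and hence each vector $\xi \in \H_{G,v}$, so $T=0$; combined with the standard equivalence between essentiality and triviality of the annihilator, this gives an elementary, self-contained argument that avoids the dilation-theoretic input from \cite{DS18}. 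What the paper's route buys is uniformity with the techniques used elsewhere in the paper (and a stronger uniqueness-type statement about quotients); what yours buys is that it only uses faithfulness of the left regular representation and basic operator theory. The minimality half is the same in both proofs: an essential ideal meets each simple summand $\K(\H_{G,v})$, hence contains it, hence contains $\I_G$.

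One caveat on your closing aside (not used in the proof, so it does not affect correctness): the claim that for non-row-finite graphs $\I_G$ fails to be essential because ``a nonzero operator supported on $\H_{G,w}$ would annihilate $\I_G$'' presumes such an operator exists in $\T(G)$, which is not automatic. For instance, for the graph with two vertices and infinitely many edges from $v$ to $w$ (and no other edges), the restriction of $\T(G)$ to $\H_{G,v}$ is faithful by the uniqueness theorem for Toeplitz graph algebras, so no nonzero element of $\T(G)$ is supported on $\H_{G,w}$, and $\I_G = \K(\H_{G,v})$ is in fact still essential there. Row-finiteness is genuinely used in your argument to make the index sets match, but the failure of the conclusion without it is a subtler matter than your parenthetical suggests.
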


\begin{proof}
We first show that $\I_G$ is essential. Let $\I$ be an ideal of $\T(G)$ such that $\I \cap \I_G = \{0\}$. Then we have a natural quotient map $q : \T(G) \rightarrow \T(G) / \I$. However, since $\I$ does not intersect $\I_G = \oplus_{v\in V} \K(\H_{G,v})$, we see that $q(T_v - \sum_{v \in V}T_eT_e^*) \neq 0$ for any $v\in V$. Hence by \cite[Theorem 3.2 \& Corollary 3.3]{DS18} we see that $q \cong \bigoplus_{v\in V} \pi_v^{(\alpha_v)} \oplus \pi_b = \pi_L \oplus \pi_b$ with $\alpha_v \geq 1$ where $\pi_b$ is a representation associated to a CK family. Thus, we get that $q$ is injective, and we must then have that $\I = \{0\}$. 

Next we show that $\I_G$ is a minimum essential ideal. If $\I$ is another essential ideal for $\T(G)$, then $\I \cap \K(\H_{G,v}) \neq \{0 \}$ for all $v\in V$. Hence, we must actually have that $\K(\H_{G,v}) \subseteq \I$. Thus, $\I_G \subseteq \I$ and $\I_G$ is minimum essential.
\end{proof}

Let $G=(V, E)$ be an arbitrary directed graph. From \cite[Chapter 8]{Rae05} we know that $\mathcal{T}(G)$ and $\mathcal{O}(G)$ arise as the Toeplitz-Pimsner and Cuntz-Pimsner algebras of a $C^*$-correspondence over $c_0(V)$. Indeed, define a right pre-Hilbert $c_0(V)$-module structure on finitely supported functions $c_f(E)$ by
	$$(x\cdot a)(e)=x(e)a(s(e)), \text{ for all } x\in c_f(E), a\in c_0(V), e\in E \ , \ \text{ and} $$
	$$ {\langle x,y \rangle} (v)= \sum_{ e\in s^{-1}(v)} \overline{x(e)}y(e), \text{ for all } x,y\in c_f(E), v\in V.$$
	We denote by $X(G)$ the completion of $c_f(E)$ with respect to the induced norm $\|x \|^2 = \| \langle x , x \rangle \|$ defined for $x \in c_f(E)$. Then $X(G)$ becomes a $C^*$-correspondence over $c_0(V)$ by defining the left action $\phi_{X(G)}$ to be
	$$ 
	\phi(a)(x)(e)= a(r(e))x(e), \text{ for all } x\in c_f(E), e\in E, 
	$$ 
	which uniquely extends to a left action on the completion $X(G)$. The $C^*$-correspondence $X(G)$ is called the graph correspondence associated to $G$. In \cite[Chapter 8]{Rae05} it is shown that rigged representations of $X(G)$ are in bijective correspondence with Toeplitz-Cuntz-Krieger families of $G$ and that rigged \emph{covariant} representations of $X(G)$ are in bijective correspondence with Cuntz-Krieger families of $G$. Thus, we see that $\T(G) \cong \T(X(G))$ and that $\O(G) \cong \O(X(G))$, and we treat these realizations interchangeably without mention from now on. The following then generalizes \cite[Theorem 3 (1)]{BLRS19} to arbitrary graphs.

	\begin{Thm}\label{T: Graphs-graded-base-pres}
		Let $G = (V,E)$ and $G' = (V',E')$ be directed graphs. The following are equivalent
		\begin{enumerate}
			\item
			$G$ and $G'$ are isomorphic directed graphs.
			\item
			$X(G)$ and $X(G')$ are unitarily isomorphic $C^*$-correspondences.
			\item
			$\T_+(G)$ and $\T_+(G')$ are graded completely isometrically isomorphic.
			\item
			$\T(G)$ and $\T(G')$ are base-preserving graded *-isomorphic.
		\end{enumerate}		
	\end{Thm}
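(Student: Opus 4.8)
The plan is to establish a cycle of implications $(1)\Rightarrow(2)\Rightarrow(3)\Rightarrow(4)\Rightarrow(1)$. The implication $(1)\Rightarrow(2)$ is immediate: a graph isomorphism $G\cong G'$ induces a bijection on vertices and edges which transports the $C^*$-correspondence structure of $X(G)$ onto that of $X(G')$, giving a unitary isomorphism. The implications $(2)\Rightarrow(3)\Rightarrow(4)$ are exactly Corollary \ref{C:main-square} applied to the correspondences $E=X(G)$ and $F=X(G')$. So the heart of the matter is $(4)\Rightarrow(1)$, and this is where I would spend all the effort.

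For $(4)\Rightarrow(1)$, the key observation is that $X(G)$ is a $C^*$-correspondence over $c_0(V)$, and $c_0(V)$ is a $C^*$-subalgebra of compact operators (it embeds as diagonal operators on $\ell^2(V)$). Therefore Corollary \ref{C:equiv-iso} applies: a base-preserving graded $*$-isomorphism $\T(G)\to\T(G')$ yields a unitary isomorphism of $C^*$-correspondences $X(G)\cong X(G')$, implemented by a $*$-isomorphism $\rho\colon c_0(V)\to c_0(V')$. So it remains to show that a unitary isomorphism of graph correspondences forces a graph isomorphism. First, $\rho\colon c_0(V)\to c_0(V')$, being a $*$-isomorphism of commutative $C^*$-algebras with these specific $c_0$ structures, is induced by a bijection $h\colon V\to V'$ (it must send minimal projections, i.e. point masses $\delta_v$, to minimal projections $\delta_{h(v)}$). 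Then, chasing the compatibility $U(a\cdot x\cdot b)=\rho(a)\cdot U(x)\cdot\rho(b)$ together with the inner-product preservation $\langle U(x),U(y)\rangle=\rho(\langle x,y\rangle)$, one shows that $U$ must carry the ``edge basis'' $\{\delta_e\}_{e\in E}$ of $X(G)$ to scalar multiples of the edge basis $\{\delta_{e'}\}_{e'\in E'}$: indeed $U(\delta_e)$ is supported on edges $e'$ with $s(e')=h(s(e))$ and $r(e')=h(r(e))$, and the rigged (inner product) condition forces $U(\delta_e)$ to have norm one, while the left-action and right-action intertwining pin it down to a single such edge up to a phase. This produces a bijection $g\colon E\to E'$ with $s'\circ g=h\circ s$ and $r'\circ g=h\circ r$, i.e. $(h,g)$ is a graph isomorphism $G\cong G'$.

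The main obstacle is the last step—extracting the edge bijection from the unitary $U$—and the care needed is essentially bookkeeping: one must verify that $U$ respects the decomposition of $X(G)$ according to source and range of edges, and that within each ``block'' (fixed source $v$, fixed range $w$) the map $U$ restricts to a unitary between Hilbert spaces whose dimensions are $A_G(w,v)$ and $A_{G'}(h(w),h(v))$; equality of these dimensions forces $A_G = A_{G'}\circ(h\times h)$, hence the graph isomorphism. Some of this is already packaged in \cite[Subsection 2.1]{DO18} or in the structure theory of graph correspondences, so I would cite those rather than redo the diagonalization. A clean alternative for $(4)\Rightarrow(1)$, avoiding compacts entirely, is to use Corollary \ref{c:core-iso}: the $*$-isomorphism $\varphi_1\colon B_1^{X(G)}\to B_1^{X(G')}$ identifies $\K(X(G))\cong\K(X(G'))$ equivariantly over $c_0(V)\cong c_0(V')$, and $\K(X(G))\cong\bigoplus_v\K(\ell^2(s^{-1}(v)))$—comparing the multiplicities of these matrix blocks as $c_0(V)$-bimodules recovers the adjacency matrix up to relabeling. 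Either route works; I would present the Corollary \ref{C:equiv-iso} route as the main line since it reuses machinery already in place.
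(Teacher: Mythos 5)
Your proposal follows essentially the same route as the paper: reduce everything to a unitary isomorphism of the graph correspondences via Corollary \ref{C:equiv-iso} (noting $c_0(V)\subseteq\K(\ell^2(V))$), read off a vertex bijection from the induced $*$-isomorphism $c_0(V)\to c_0(V')$, and then recover the adjacency matrices by comparing, for each pair of vertices, the dimensions of the corners $\{x : x = p_v x p_w\}$, which is exactly the paper's argument for $(2)\Rightarrow(1)$. One intermediate claim in your write-up is false, though harmless: a unitary correspondence isomorphism does \emph{not} send each edge vector $\delta_e$ to a scalar multiple of a single edge vector $\delta_{e'}$. Whenever a block $\{e : s(e)=w,\ r(e)=v\}$ has two or more edges, the left and right actions of $c_0(V)$ act on that block as scalars, so any unitary rotation inside the block intertwines both actions and the inner product; hence $U(\delta_e)$ is only constrained to be a unit vector in the corresponding block of $X(G')$, not a phase times a basis vector. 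This does not damage the proof, because a graph isomorphism only requires the multiplicities to match: your fallback argument (and the paper's) that $U$ restricts to a Hilbert-space unitary between blocks of dimensions $A_G(v,w)$ and $A_{G'}(\widehat\rho(v),\widehat\rho(w))$ already forces equality of adjacency matrices, and one then chooses edge bijections blockwise. Note this dimension count also covers infinite multiplicities, since unitaries preserve Hilbert-space dimension as a cardinal. Your alternative sketch via Corollary \ref{c:core-iso} is plausible but would need the same kind of bimodule bookkeeping, so presenting the Corollary \ref{C:equiv-iso} route as the main line is the right call.
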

	
\begin{proof}
Observe that in the case of a graph correspondence $X(G)$, the base algebra $\A=c_0(V)$ is a $C^*$-subalgebra of diagonal compact operators in $B(\ell^2(V))$. Therefore, we can apply Corollary \ref{C:equiv-iso} to conclude that items $(2)-(4)$ are equivalent. Clearly $(1) \implies (2)$ and the converse is proven as follows. If $U : X(G) \rightarrow X(G')$ is a unitary isomorphism of $C^*$-correspondences, implemented by a bijection $\rho : c_0(V) \rightarrow c_0(V')$, then the map $\widehat{\rho} : V \rightarrow V'$ between the spectra is given by $\widehat{\rho}(v) = v'$ where $v' \in V'$ is the unique vertex such that $\rho(p_v) = p_{v'}$. Given $v,w\in V$, note that the subspace $\{ \ x \in X(G) \ | \ x = p_v x p_w \ \}$ has dimension exactly $A_G(v,w)$, and is mapped under $U$ to the subspace $\{ y \in X(G') \ | \ y = p_{v'} y p_{w'} \ \}$ which is of dimension $A_{G'}(v',w')$. Hence, we see that $A_G(v,w) = A_{G'}(v',w')$ so that $G$ and $G'$ are isomorphic graphs via $\widehat{\rho}$.
\end{proof}
	
From \cite[Chapter 8]{Rae05} we know that $J_{X(G)} = c_0(V^{\reg})$ and that $\J_G:= \K(\F(X(G))J_{X(G)}) = \J_{X(G)}$ is the ideal generated by $T_v - \sum_{e\in r^{-1}(v)}T_e T_e^*$ with $v \in V^{\reg}$. Hence we see that $\J_G = \oplus_{v\in V^{\reg}} \K(\H_{G,v})$ under the identification with $\T(G)$ as the $C^*$-algebra generated by $L = (L_v,L_e)$.

\begin{Prop} \label{P: maximum-contained}
Let $G$ be a row-finite graph. Then $\J_G$ is the maximum ideal $\J$ of $\T(G)$ contained in $\I_G$ such that $L_v \notin \J$ for any $v\in V$.
\end{Prop}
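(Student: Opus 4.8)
The plan is to argue entirely inside the concrete picture $\T(G)\cong C^*(L)$ afforded by the left regular TCK family, under which (using row-finiteness, so $V=V^{\fin}$) one has $\I_G=\bigoplus_{v\in V}\K(\H_{G,v})$ and $\J_G=\bigoplus_{v\in V^{\reg}}\K(\H_{G,v})$. To prove that $\J_G$ is the \emph{maximum} ideal of $\T(G)$ contained in $\I_G$ with $L_v\notin\J$ for all $v$, I would split the statement into two halves: (i) $\J_G$ itself lies in this family, and (ii) every ideal in this family is contained in $\J_G$.

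For (i), the inclusion $\J_G\subseteq\I_G$ is immediate from $V^{\reg}\subseteq V$. To see that $L_v\notin\J_G$ for every $v\in V$, recall that $\J_G=\J_{X(G)}$, so $\T(G)/\J_G\cong\T(X(G))/\J_{X(G)}\cong\O(X(G))\cong\O(G)$, and under this identification the image of $L_v$ is the vertex projection $S_v$ of the universal CK family, which is nonzero by \cite{Rae05}. Hence $L_v\notin\J_G$.

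For (ii), let $\J\triangleleft\T(G)$ with $\J\subseteq\I_G$ and $L_v\notin\J$ for all $v$. Since $\I_G$ is an ideal of $\T(G)$ and $\J\subseteq\I_G$, the ideal $\J$ is automatically an ideal of $\I_G\cong\bigoplus_{v\in V}\K(\H_{G,v})$; as each $\K(\H_{G,v})$ is simple and nonzero ($\xi_v\in\H_{G,v}$), a routine approximate-unit argument shows that $\J$ is a coordinate ideal, i.e. $\J=\overline{\bigoplus_{v\in W}\K(\H_{G,v})}$ where $W=\{v\in V:\K(\H_{G,v})\subseteq\J\}$. It remains to prove $W\subseteq V^{\reg}$. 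Fix $v\in V\setminus V^{\reg}$; since $G$ is row-finite this forces $r^{-1}(v)=\emptyset$, so the defining projection $q_v=L_v-\sum_{e\in r^{-1}(v)}L_eL_e^*$ equals $L_v$. Using that $L_\lambda\xi_v=\xi_\lambda$ when $s(\lambda)=v$ and $L_\lambda\xi_v=0$ otherwise, one computes $L_\lambda q_v L_\mu^*=\theta_{\xi_\lambda,\xi_\mu}$ for $s(\lambda)=s(\mu)=v$ (and $0$ otherwise), so these rank-one operators span a dense subspace of $\K(\H_{G,v})$; since $\H_{G,v}$ is reducing for $C^*(L)$, the ideal of $\T(G)$ generated by $q_v$ is precisely $\K(\H_{G,v})$. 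Thus $v\in W$ would yield $L_v=q_v\in\J$, contradicting the hypothesis. Hence $W\subseteq V^{\reg}$ and $\J=\overline{\bigoplus_{v\in W}\K(\H_{G,v})}\subseteq\bigoplus_{v\in V^{\reg}}\K(\H_{G,v})=\J_G$.

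The step carrying the real content is the coordinate-ideal reduction in (ii): it bundles together the (elementary but not vacuous) facts that an ideal of $\T(G)$ lying inside $\I_G$ is an ideal of $\I_G$, that the closed ideals of a $c_0$-direct sum of simple $C^*$-algebras are exactly the closed subsums, and that in the non-unital $C^*(L)$ the ideal generated by the rank-one projection $q_v$ is all of $\K(\H_{G,v})$. Once these are in place, and once one observes that $q_v=L_v$ exactly when $v$ receives no edges, the proof is purely formal.
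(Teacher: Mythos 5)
Your proof is correct and follows essentially the same route as the paper: identify $\J\subseteq\I_G$ as a coordinate ideal $\bigoplus_{v\in W}\K(\H_{G,v})$, then note that for a non-regular vertex of a row-finite graph (a vertex receiving no edges) $L_v$ generates $\K(\H_{G,v})$, so $L_v\notin\J$ forces $W\subseteq V^{\reg}$. The extra details you supply (the quotient onto $\O(G)$ to see $L_v\notin\J_G$, and the rank-one computation showing $q_v=L_v$ generates $\K(\H_{G,v})$) are just explicit versions of steps the paper leaves implicit.
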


\begin{proof}
We already know that $L_v \notin \J_G$ for all $v\in V$. Next, if $\J$ is an ideal contained in $\I_G$ such that $L_v \notin \J$ for all $v\in V$, as $\J \subseteq \I_G = \oplus_{v\in V}\K(\H_{G,v})$ we must have that $\J = \oplus_{v \in V'} \K(\H_{G,v})$ for some subset $V' \subseteq V$. However when $v\in V$ is a source we have that $\K(\H_{G,v})$ is the ideal generated by $L_v$. Since $L_v \notin \J$ we must have that $\K(\H_{G,v}) \cap \J = \{0\}$ so that $V' \subseteq V^{\reg}$ and hence $\J \subseteq \J_G$.
\end{proof}
	
In what follows, we refer to \cite{DT02} for additional details. It is clear $K_0(\A) \cong \bigoplus_{v\in V} \bZ$ and $K_0(J_{X(G)}) \cong \bigoplus_{v\in V^{\reg}} \bZ$. Let $A_G = \begin{bmatrix} B_G & C_G \\ * & * \end{bmatrix}$ denote the $2\times 2$ block decomposition of $A_G$ according to $V^{\reg}$ and $V^{\sing} := V \setminus V^{\reg}$.
	By \cite[Theorem 3.1]{DT02}, the map $i_0 -[X(G)] : K_0(J_{X(G)}) \rightarrow K_0(\A)$ is identified with the matrix $\begin{bmatrix} B_G^t - I\\ C_G^t \end{bmatrix}$. Note that when $G$ is \emph{row-finite}, we get that $A_G = \begin{bmatrix} B_G & C_G \\ 0 & 0 \end{bmatrix}$, so that $G$ is completely determined by the map $i_0 -[X(G)]$.
	
We next prove a substantial strengthening of Theorem \ref{T: Graphs-graded-base-pres} in the row-finite case. This showcases the strength of applying the hierarchy established in Corollary \ref{C:main-square} to resolve stable isomorphism problems for non-self-adjoint algebras via techniques from K-theory.
	
\begin{Thm} \label{T: stable-graphs-base-pres}
Let $G=(V,E)$ and $G' = (V',E')$ be row-finite directed graphs. The following are equivalent.
	\begin{enumerate}
			\item
			$G$ and $G'$ are isomorphic directed graphs.
			\item
			$\T_+(G)$ and $\T_+(G')$ are completely isometrically isomorphic.
			\item
			$\T_+(G) \otimes \K$ and $\T_+(G') \otimes \K$ are completely isometrically isomorphic.
			\item
			$\T(G)$ and $\T(G')$ are base-preserving *-isomorphic.
			\item
			$\T(G) \otimes \K$ and $\T(G') \otimes \K$ are base-preserving *-isomorphic.
	\end{enumerate}
\end{Thm}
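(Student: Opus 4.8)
The plan is to establish the equivalences by running the two cycles of implications $(1)\Rightarrow(2)\Rightarrow(3)\Rightarrow(1)$ and $(1)\Rightarrow(4)\Rightarrow(5)\Rightarrow(1)$. The implications $(1)\Rightarrow(2)$ and $(1)\Rightarrow(4)$ are immediate from Theorem~\ref{T: Graphs-graded-base-pres} (which even produces \emph{graded} isomorphisms), and $(2)\Rightarrow(3)$, $(4)\Rightarrow(5)$ follow by tensoring the given isomorphism with $\id_{\K}$: this preserves being a completely isometric isomorphism, respectively a $*$-isomorphism, by functoriality of the spatial tensor product, and it preserves base-preservation since the base algebra of $X(G)\otimes\K$ is $c_0(V)\otimes\K$. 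All the content therefore lies in $(3)\Rightarrow(1)$ and $(5)\Rightarrow(1)$.

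First I would show that each of $(3)$ and $(5)$ produces a \emph{base-preserving s.e.s.\ isomorphism} $\Psi\colon\T(G)\otimes\K\to\T(G')\otimes\K$ with respect to the base $c_0(V)\otimes\K$. For $(3)$: by Proposition~\ref{P: KatIdealPres} one has $\T_+(G)\otimes\K\cong\T_+(X(G)\otimes\K)$ and $\T(G)\otimes\K\cong\T(X(G)\otimes\K)$, compatibly with the bases and with the ideals of relations; so a completely isometric isomorphism as in $(3)$ becomes a completely isometric isomorphism $\T_+(X(G)\otimes\K)\cong\T_+(X(G')\otimes\K)$, which Theorem~\ref{T:non-sa-to-sa} extends to a base-preserving s.e.s.\ isomorphism of the associated Toeplitz algebras. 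For $(5)$: here I would prove the stabilized analogues of Propositions~\ref{P: minimal-essential} and~\ref{P: maximum-contained}. Since the closed two-sided ideals of $A\otimes\K$ are precisely the $I\otimes\K$ for $I$ an ideal of $A$, with $I\otimes\K$ essential exactly when $I$ is, one gets that $\I_G\otimes\K$ is the minimum essential ideal of $\T(G)\otimes\K$; transporting Proposition~\ref{P: maximum-contained} through this lattice isomorphism, $\J_G\otimes\K$ is the maximum ideal contained in $\I_G\otimes\K$ that does not contain the ideal generated by the summand $\bC L_v\otimes\K$ of $c_0(V)\otimes\K$ for any $v\in V$ (using that this generated ideal is $J_v\otimes\K$, where $J_v$ is the ideal of $\T(G)$ generated by $L_v$). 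Now any $*$-isomorphism preserves the minimum essential ideal, and a base-preserving one restricts to a $*$-isomorphism $c_0(V)\otimes\K\to c_0(V')\otimes\K$ that matches the simple summands $\bC L_v\otimes\K$ according to a bijection $\widehat{\rho}\colon V\to V'$; it therefore carries the distinguished family $\{J_v\otimes\K\}_{v\in V}$ onto the corresponding family for $G'$, and hence sends $\J_G\otimes\K$ onto $\J_{G'}\otimes\K$. So $\Psi$ is a base-preserving s.e.s.\ isomorphism in both cases.

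Given such a $\Psi$, with induced vertex bijection $\widehat{\rho}\colon V\to V'$, I would finish with K-theory. The algebra $c_0(V)\otimes\K\cong\bigoplus_{v\in V}^{c_0}\K$ embeds into the compact operators on $\bigoplus_{v\in V}\ell^2$, hence is a $C^*$-subalgebra of compact operators, and Proposition~\ref{P:compute-tau} applies to $X(G)\otimes\K$ and $X(G')\otimes\K$: it gives $\Psi\big(J_{X(G)}\otimes\K\big)=J_{X(G')}\otimes\K$ (using Corollary~\ref{c:tensor-katsura-preserved} and $J_{X(G)}=c_0(V^{\reg})$) and identifies $\tau_0$ with $\big(\Psi|_{c_0(V^{\reg})\otimes\K}\big)_0$. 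Since $c_0(V^{\reg})\otimes\K$ sits inside $c_0(V)\otimes\K$ as the summands over $V^{\reg}$ and $\Psi$ matches summands through $\widehat{\rho}$, this forces $\widehat{\rho}$ to carry $V^{\reg}$ onto the regular vertices of $G'$, identifies $\tau_0$ with the permutation of $\bigoplus_{v\in V^{\reg}}\bZ$ induced by $\widehat{\rho}|_{V^{\reg}}$, and $\rho_0$ with the permutation of $\bigoplus_{v\in V}\bZ$ induced by $\widehat{\rho}$. Feeding this into the commuting square of Theorem~\ref{T:K-theory-six-term} relating $i_0-[X(G)]$ and $i_0-[X(G')]$, and using \cite[Theorem~3.1]{DT02} --- which identifies $i_0-[X(G)]$ with $B_G^t-I$ over $V^{\reg}$ together with $C_G^t$ over $V^{\sing}$ --- together with row-finiteness (so that $A_G$ has zero rows over $V^{\sing}$ and is thereby recovered from $i_0-[X(G)]$), the square becomes an identity expressing $i_0-[X(G')]$ as $\rho_0\circ(i_0-[X(G)])\circ\tau_0^{-1}$. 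Since $\rho_0$ restricted to the $V^{\reg}$-coordinates coincides with $\tau_0$ (both equal $\widehat{\rho}|_{V^{\reg}}$), the contribution of the $-I$ terms cancels, and one reads off $A_{G'}(\widehat{\rho}(v),\widehat{\rho}(w))=A_G(v,w)$ for all $v,w\in V$; that is, $\widehat{\rho}$ is an isomorphism of directed graphs $G\cong G'$, which is $(1)$.

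The main obstacle I anticipate is not a single hard estimate but the coordination of the two vertex relabelings that appear --- the bijection of $V$ coming from the base algebra and the bijection of $V^{\reg}$ coming from the ideal of relations $\J_G$ --- since the cancellation of the $-I$ summand in $B_G^t-I$ only goes through if they agree on $V^{\reg}$. That compatibility is exactly what Proposition~\ref{P:compute-tau}, Corollary~\ref{c:tensor-katsura-preserved} and the identity $J_{X(G)}=c_0(V^{\reg})$ are arranged to guarantee. A secondary point worth checking is that ideal lattices and essentiality behave correctly under stabilization, which is what makes the intrinsic characterizations of $\I_G\otimes\K$ and $\J_G\otimes\K$ in the $(5)\Rightarrow(1)$ argument legitimate.
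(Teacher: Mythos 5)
Your proposal is correct and follows essentially the same route as the paper: reduce (3) and (5) to a base-preserving s.e.s.\ isomorphism of the stabilized Toeplitz algebras (via Proposition \ref{P: KatIdealPres}, Theorem \ref{T:non-sa-to-sa}, and the intrinsic characterization of $\J_G$ through Propositions \ref{P: minimal-essential} and \ref{P: maximum-contained}), then recover $A_G$ from the K-theoretic map $i_0-[X(G)]$ via Theorem \ref{T:K-theory-six-term} and \cite[Theorem 3.1]{DT02}. The only cosmetic difference is that the paper normalizes the isomorphism to be the identity on the base (so that $\rho_0$ and $\tau_0$ are identities and the maps are literally equal), whereas you track the induced vertex bijection $\widehat{\rho}$ and its compatibility on $V^{\reg}$ explicitly via Proposition \ref{P:compute-tau}.
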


\begin{proof}
It is clear that $(1)$ implies any one of $(2) - (5)$, that $(2)$ implies $(3)$ and that $(4)$ implies $(5)$. By Corollary \ref{C:main-square} we also have that $(2)$ implies $(4)$ and that $(3)$ implies $(5)$ with the addition of Proposition \ref{P: KatIdealPres}. Hence, it will suffice to show that $(5)$ implies $(1)$.

Suppose $\varphi : \T(G)\otimes \K \rightarrow \T(G')\otimes \K$ is a base-preserving *-isomorphism. By Proposition \ref{P: KatIdealPres} we may identify $\varphi$ with an isomorphism of the Toeplitz algebras $\T(X(G)\otimes \K)$ and $\T(X(G')\otimes \K)$. Hence, without loss of generality we may assume that $V= V'$, that $c_0(V)\otimes \K=c_0(V')\otimes \K$ as subalgebras of $\T(X(G)\otimes \K)$ and $\T(X(G')\otimes \K)$ and that $\varphi|_{c_0(V)\otimes \K}=\id_{c_0(V)\otimes\K}$.

By Propositions \ref{P: minimal-essential} and \ref{P: maximum-contained} we see that $\J_G$ (and $\J_{G'}$) is a maximum ideal $\J$ contained in a minimum essential ideal such that $L_v \notin \J$ for all $v\in V$. Since $\varphi|_{c_0(V)\otimes \K}=\id_{c_0(V)\otimes\K}$ and tensoring with $\K$ preserves the lattice of ideals, we see that $\varphi$ must map $\J_G \otimes \K$ to $\J_{G'} \otimes \K$. Thus, it follows that $\varphi$ is a s.e.s. base preserving *-isomorphism.
		
By Theorem \ref{T:K-theory-six-term} and the discussion preceding our theorem we have that $i_0^{X(G)}-[X(G)]$ equals $i_0^{X(G')}-[X(G')]$. 
Since these two maps determine $A_G$ and $A_{G'}$ respectively, it follows that $G$ and $G'$ must be isomorphic directed graphs.
\end{proof}

The next example shows that the implications $(5) \implies (3)$ and $(4) \implies (2)$ in Theorem \ref{T: stable-graphs-base-pres} can fail for graphs that are not row-finite.

\begin{Exl} \label{ex:non-iso-base-iso}
Consider the two graphs $G$ and $G'$ given by adjacency matrices
\[
\begin{bmatrix}0&\infty\\\infty&0\end{bmatrix}\text{ and } \begin{bmatrix}\infty&\infty\\\infty&0\end{bmatrix}
\]
respectively. We visualize these graphs as
\[
\xymatrix{\bullet\ar@/^/@{=>}[r]^(0.1){v}&\bullet\ar@/^/@{=>}[l]^(0.1){w}&}\text{ and }\xymatrix{&\bullet\ar@(dl,ul)@{=>}[]\ar@/^/@{=>}[r]^(0.1){v'}&\bullet\ar@/^/@{=>}[l]^(0.1){w'}}
\]
where double edges have infinite multiplicity. We remark that these graphs differ by the so-called ``move (T)'' as described in \cite{ERS12}. Since all vertices are singular, their associated graph $C^*$-algebras coincide with their associated Toeplitz-Cuntz-Krieger algebras, and are isomorphic due to \cite[Lemma 3.6]{ERS12}. Tracing through the $*$-isomorphisms that are concretely given in \cite[Lemma 3.1 and Lemma 3.6]{ERS12}, we see that they send $S_v$ to $S_{v'}$ and 
$S_w$ to $S_{w'}$, so that  $ \T(G)$ and $ \T(G')$ are base-preserving $^*$-isomorphic.

This shows the necessity of requiring the graphs be row-finite in Theorem \ref{T: stable-graphs-base-pres}. Furthermore, note that the isomorphism is trivially s.e.s. since $\O(G)=\T(G)$ and $\O(G')=\T(G')$. Since these are Toeplitz-Pimsner and Cuntz-Pimsner algebras, this example also shows that condition $(4)$ and $(5)$ in Corollary \ref{C:main-square} are not equivalent. Lastly, by \cite[Theorem 2.11]{KK04} we see that conditions $(5)$ and $(3)$ in Corollary \ref{C:main-square} are also not equivalent.
\end{Exl}
	
\subsection*{Acknowledgments} The third author would like to express her gratitude to her supervisors Prof. Ilan Hirshberg and Prof. Wilhelm Winter for their helpful advice and generous support. She is especially thankful for the various operator algebras seminars, courses and discussions held in M\"{u}nster during her time spent there.
	

\end{document}